\numberwithin{equation}{section}
\theoremstyle{plain}
\newcommand{\bfm}[1]{\ensuremath{\mathbf{#1}}}
\def\ba{\bfm a}     \def\bA{\bfm A}          
\def\bb{\bm b}     \def\bB{\bfm B}          
     \def\bD{\bfm D}          
\def\bff{\bm f}              
     \def\bG{\bm G}          
     \def\bI{\bfm I}
     \def\bR{\bfm R}          
     \def\bS{\bfm S}          
     \def\bT{\bfm T}          
\def\bu{\bm u}               
     \def\bW{\bfm W}          
     \def\bX{\bm{X}}         
\def\bz{\bm z}     \def\bZ{\bm Z}
\newcommand{\bfsym}[1]{\ensuremath{\boldsymbol{#1}}}
       \def \bbeta    {\bfsym{\beta}}
\def \bgamma   {\bfsym{\gamma}}       \def \bdelta   {\bfsym{\delta}}
     \def \bzeta    {\bfsym{\zeta}}
         \def \btheta   {\bfsym{\theta}}
      \def \bmu      {\bfsym{\mu}}
\def \bnu      {\bfsym{\nu}}          \def \bxi      {\bfsym{\xi}}
       \def \bDelta   {\bfsym{\Delta}}
\def \bSigma   {\bfsym{\Sigma}}
\renewcommand{\hat}{\widehat}
\def \heps     {\hat{\heps}}
\DeclareMathOperator*{\argmin}{argmin}
\DeclareMathOperator{\corr}{corr}
\DeclareMathOperator{\cov}{cov}
\DeclareMathOperator{\diag}{diag}
\DeclareMathOperator{\sgn}{sgn}
\def \var   {\mbox{var}}
\def \corr  {\mbox{corr}}
\def \sgn   {\mbox{sgn}}
\def\today{\ifcase\month\or
  January\or February\or March\or April\or May\or June\or
  July\or August\or September\or October\or November\or December\fi
  \space\number\day, \number\year}
\def \wt      {\widetilde}
\def \newpage {\vfill\eject}
\newdimen\biblioindent\biblioindent=30pt
\newcommand{\beq}  {\begin{equation}}
\newcommand{\eeq}  {\end{equation}}
\newcommand{\beqn} {\begin{eqnarray}}
\newcommand{\eeqn} {\end{eqnarray}}
\newcommand{\beqnn}{\begin{eqnarray*}}
\newcommand{\eeqnn}{\end{eqnarray*}}
\newcommand{\mo}{\mathbf{0}}
\newcommand{\sn}{\sum_{i=1}^n}
\newcommand{\e}{\mathbb{E}}
\def\bbr{ \mathbb{R}}
\newcommand{\nn}{\nonumber}
\newtheorem{theorem}{Theorem}[section]
\newtheorem{remark}{Remark}[section]
\newtheorem{assumption}{Condition}[section]
\newtheorem{lemma}{Lemma}[section]
\newtheorem{proposition}{Proposition}[section]
\newcounter{CondCounter}
\newcommand{\bee}{\begin{equation}}
\newcommand{\eee}{\end{equation}}
\DeclarePairedDelimiter\ceil{\lceil}{\rceil}
\DeclarePairedDelimiter\floor{\lfloor}{\rfloor}
\newcommand{\FDP}{{\rm FDP}}
\newcommand{\Card}{{\rm Card}}
\newcommand{\Id}{{\rm Id}}
\newcommand{\T}{\intercal}
\begin{document}

\begin{frontmatter}
\title{{A New Perspective on Robust $M$-Estimation: Finite Sample Theory and Applications to Dependence-Adjusted Multiple Testing}\thanksref{T1}}
\runtitle{Robust Estimation: New Perspective and Applications}
\thankstext{T1}{J. Fan and W.-X. Zhou were partially supported by NIH Grant R01--GM072611 and NSF Grant DMS--1206464. K. Bose was supported in part by NSF Grant DGE--1148900. H. Liu was supported by NSF Grants DMS--1454377, IIS--1332109, IIS--1408910 and IIS--1546482, and NIH Grants R01--MH102339 and R01--GM083084.}

\begin{aug}
\author{\fnms{Wen-Xin} \snm{Zhou}\thanksref{m0,m1}\ead[label=e1]{wez243@ucsd.edu}},
\author{\fnms{Koushiki} \snm{Bose}\thanksref{m1}\ead[label=e2]{bose@princeton.edu}},
\author{\fnms{Jianqing} \snm{Fan}\thanksref{m2,m1}\ead[label=e3]{jqfan@princeton.edu}} \\
\and
\author{\fnms{Han} \snm{Liu}\thanksref{m1}
\ead[label=e4]{hanliu@princeton.edu}
\ead[label=u1,url]{http://www.foo.com}}

\runauthor{Zhou, Bose, Fan and Liu}

\affiliation{University of California, San Diego\thanksmark{m0}, Princeton University\thanksmark{m1} \\
and Fudan University\thanksmark{m2}}

\address{W.-X. Zhou \\
Department of Mathematics \\
University of California, San Diego \\
La Jolla, California 92093 \\
USA \\
\printead{e1}}

\address{
K. Bose \\
H. Liu \\
Department of Operations Research \\
~~~and Financial Engineering \\
Princeton University \\
Princeton, New Jersey 08544 \\
USA \\
{E-mail: }\printead*{e2}  \\ \phantom{E-mail:} \printead*{e4}}

\address{
J. Fan\\
School of Data Science \\
Fudan University \\
Shanghai 200433 \\
China \\
and \\
Department of Operations Research \\
~~~and Financial Engineering \\
Princeton University \\
Princeton, New Jersey 08544 \\
USA \\
\printead{e3}}
\end{aug}

\begin{abstract}
Heavy-tailed errors impair the accuracy of the least squares estimate, which can be spoiled by a single grossly outlying observation. As argued in the seminal work of Peter Huber in 1973 [{\it Ann. Statist.} {\bf 1} (1973)  799--821], robust alternatives to the method of least squares are sorely needed. To achieve robustness against heavy-tailed sampling distributions, we revisit the Huber estimator from a new perspective by letting the tuning parameter involved diverge with the sample size. In this paper, we develop nonasymptotic concentration results for such an adaptive Huber estimator, namely, the Huber estimator with the tuning parameter adapted to sample size, dimension, and the variance of the noise. Specifically, we obtain a sub-Gaussian-type deviation inequality and a nonasymptotic Bahadur representation when noise variables only have finite second moments. The nonasymptotic results further yield two conventional normal approximation results that are of independent interest, the Berry-Esseen inequality and Cram\'er-type moderate deviation. As an important application to large-scale simultaneous inference, we apply these robust normal approximation results to analyze a dependence-adjusted multiple testing procedure for moderately heavy-tailed data. It is shown that the robust dependence-adjusted procedure asymptotically controls the overall false discovery proportion at the nominal level under mild moment conditions. Thorough numerical results on both simulated and real datasets are also provided to back up our theory.
\end{abstract}

\begin{keyword}[class=MSC]
\kwd[Primary ]{62F03}
\kwd{62F35}
\kwd[; secondary ]{62J05}
\kwd{62E17}
\end{keyword}

\begin{keyword}
\kwd{Approximate factor model}
\kwd{Bahadur representation}
\kwd{false discovery proportion}
\kwd{heavy-tailed data}
\kwd{Huber loss}
\kwd{large-scale multiple testing}
\kwd{$M$-estimator}
\end{keyword}

\end{frontmatter}

\section{Introduction}
\label{sec1}

High dimensional data are often automatically collected with low quality. For each feature, the samples drawn from a moderate-tailed distribution may comprise one or two very large outliers in the measurements. When dealing with thousands or tens of thousands of features simultaneously, the chance of including a fair amount of outliers is high. Therefore, the development of robust procedures is arguably even more important for high dimensional problems. In this paper, we develop a finite sample theory for robust $M$-estimation from a new perspective. Such a finite sample theory is motivated by contemporary statistical problems of simultaneously testing many hypotheses.  In these problems, the goal is either to control the false discovery rate (FDR)/false discovery proportion (FDP), or to control the familywise error rate (FWER).

The main contributions of this paper are described and summarized in the following two subsections.

\subsection{A finite sample theory of robust $M$-estimation}

Consider a linear regression model $Y=\mu^* + \bX^\T \bbeta^* + \varepsilon$, where $\mu^* \in \bbr$ is the intercept, $\bbeta^* \in \bbr^d$ is the vector of regression coefficients, $\bX\in \bbr^d$ is the vector of covariates and $\varepsilon \in \bbr$ is the random noise variable with mean zero and finite variance. Assuming that  $\varepsilon$ follows a normal distribution, statistical properties of the ordinary least squares (OLS) estimator of $\mu^*$ and $\bbeta^*$ have been well studied. When the normality assumption is violated, quoting from \cite{H1973}, ``a single grossly outlying observation may spoil the the least squares estimate'' and therefore robust alternatives to the method of least squares, typified by the Huber estimator, are sorely needed. However, unlike the OLS estimator, all the existing theoretical results for the Huber estimator are asymptotic, including asymptotic normality [\cite{H1973}, \cite{YM1979}, \cite{P1985}, \cite{M1989}] and the Bahadur representation [\cite{HS1996, HS2000}]. The main reason for the lack of nonasymptotic results is  that the Huber estimator does not have an explicit closed-form expression, while most existing nonasymptotic analyses of the OLS estimator rely on its closed-form expression.

The first contribution of this paper is to develop a new finite sample theory for the Huber estimator. Recall the Huber loss [\cite{H1964}]:
\begin{align} \label{huber.loss}
	\ell_\tau(u) = \begin{cases}
		\frac{1}{2} u^2  &~\mbox{ if } |u| \leq \tau \\
		\tau |u| - \frac{1}{2} \tau^2  &~\mbox{ if } |u| > \tau
	\end{cases},
\end{align}
a hybrid of squared loss for relatively small errors and absolute loss for large errors, where the degree of hybridization is controlled by the tuning parameter $\tau>0$ that balances robustness and efficiency. In line with this notation, we use $\ell_\infty$ to denote the quadratic loss $\ell_\infty(u) = u^2/2$, $u\in \bbr$. Let $\{ (Y_i, \bX_i  )  \}_{i=1}^n$  be independent random samples from $(Y,\bX)$. We define the robust  $M$-estimator of $\btheta^*  := (\mu^*, \bbeta^{*\T})^\T$ by
\begin{align}
	\hat{\btheta}   := ( \hat{\mu} , \hat{\bbeta}^\T )^\T \in  \argmin_{ \mu \in \bbr, \, \bbeta \in \bbr^{d}  } \sn \ell_\tau( Y_i    -  \mu -  \bX_i^\T \bbeta ) . \label{robust.est}
\end{align}
The dependence of $\hat{\btheta}$ or $(\hat{\mu}, \hat{\bbeta})$ on $\tau$ will be assumed without displaying. It is worth noticing that our robustness concern is rather different from the conventional sense. In Huber's robust location estimation [\cite{H1964}], it is assumed that the error distribution lies in the neighborhood of a normal distribution, which gives the possibility to replace the mean by some location parameter. Unless the shape of the distribution is constrained (e.g., symmetry), in general this location parameter is different from the mean. Our interest, however, is focused on mean estimation in the heavy-tailed case where the error distribution is allowed to be asymmetric and to exhibit heavy tails. Therefore, unlike the classical Huber estimator [\cite{H1973}] which requires $\tau$ to be fixed, we allow $\tau$ to diverge  with the sample size $n$ such that $\ell_\tau(\cdot)$ can be viewed as a robust approximate quadratic loss function. As in \cite{FLW2014}, this is needed to reduce the bias for estimating the (conditional) mean function when the (conditional) distribution of $\varepsilon$ is asymmetric and heavy-tailed. In particular, by taking $\tau=\infty$, $\hat{\btheta}$ coincides with the OLS estimator of $\btheta^*$ and by shrinking $\tau$ toward 0, the resulting estimator approaches the least absolute deviation (LAD) estimator.

For every $\tau >0 $, by definition $\hat{\btheta} $ is an  $M$-estimator of
\bee \label{def.thetatau}
   \btheta^*_\tau := ( \mu_\tau , \bbeta_\tau^\T )^\T = \argmin_{  \mu \in \bbr, \, \bbeta \in \bbr^{d}  } \e \{ \ell_\tau( Y  -  \mu -    \bX^\T \bbeta ) \} ,
\eee
which typically differs from the target parameter 
$$
	\btheta^*  = \argmin_{  \mu \in \bbr, \, \bbeta \in \bbr^{d}   } \e \{\ell_\infty( Y   -  \mu  - \bX^\T \bbeta  )\}. 
$$
Note that the total error $\| \hat{\btheta} - \btheta^*\|  $ can be divided into two parts:
$$
	\underbrace{ \| \hat{\btheta }  - \btheta^* \| }_{{\rm Total\,error}} \leq  \underbrace{ \| \hat{\btheta} - \btheta^*_\tau \| }_{{\rm Estimation\,error}} + \underbrace{ \| \btheta^*_\tau - \btheta^* \| }_{{\rm Approximation\,error}} ,
$$
where $\| \cdot\|$ denotes the Euclidean norm.  We define $\text{Bias}(\tau): = \|  \btheta^*_\tau - \btheta^* \|$ to be the approximation error brought by the Huber loss. Proposition~\ref{RAlem1} in the supplemental material [\cite{ZBFL2017}] shows that $\text{Bias}(\tau)$ scales at the rate $\tau^{-1}$, which decays as $\tau$ grows.  A large $\tau$ reduces the approximation bias but jeopardizes the degree of robustness. Hence, the tuning parameter $\tau$ controls this bias and robustness trade-off of the estimator. Our main theorem (Theorem \ref{RAthm1}) reveals the concentration property of $\hat{\btheta}$ and provides a nonasymptotic Bahadur representation for the difference $\hat{\btheta} - \btheta^*$.  Such a Bahadur representation gives a finite sample approximation of $\hat{\btheta}$ by a sum of independent variables with a higher-order remainder. More specifically, let  $\bSigma = \e ( \bX \bX^\T ) \in \bbr^{d\times d}$ and $\ell'_\tau(\cdot)$ be the derivative function of $\ell_\tau(\cdot)$. Then, with properly chosen $\tau =\tau_n$ we have
\bee
 	 \bigg\|   \left[\begin{array}{c}
\hat{\mu} - \mu^* \\
\bSigma^{1/2}(\hat{\bbeta} - \bbeta^* )
\end{array}\right]   - \frac{1}{n} \sn \ell'_\tau(\varepsilon_i)  \left[\begin{array}{c}
1 \\
\bSigma^{-1/2}\bX_i
\end{array}\right]  \bigg\| \leq  R_n(\tau) \simeq  \frac{d}{n},  \label{na.bound}
\eee
 where $R_n(\tau)$ is a finite sample error bound which characterizes the accuracy of such a linear approximation, and $d$ is the number of covariates that may grow with $n$. We refer to Theorem \ref{RAthm1} for a rigorous description of the result \eqref{na.bound}, where we obtain an exponential-type deviation inequality for this Bahadur representation.

Many asymptotic Bahadur-type representations for robust  $M$-estimators have been obtained in the literature; see, for example, \cite{P1985}, \cite{M1989} and \cite{HS1996, HS2000}, among others. Our result \eqref{na.bound}, however, is nonasymptotic and provides an explicit tail bound for the remainder term $R_n(\tau)$. To obtain such a result, we first derive a sub-Gaussian-type deviation bound for $\hat{\btheta}$, and then conduct a careful analysis on the higher-order remainder term using this bound and techniques from empirical process theory. The expansion \eqref{na.bound} further yields two classical normal approximation results, the Berry-Esseen inequality and Cram\'er-type moderate deviation. These results have important applications to large-scale inference [\citet{FHY2007}, \citet{DHJ2011}, \cite{LS2014}, \cite{CSZ2016}]. Consider the statistical problems of simultaneously testing many hypotheses with FDR/FDP control or globally inferring a high dimensional parameter. For multiple testing, the obtained Berry-Esseen bound and Cram\'er-type moderate deviation result can be used to investigate the robustness and accuracy of the $P$-values and critical values.  For globally testing a high dimensional parameter, the expansion \eqref{na.bound}, combined with the parametric bootstrap, can be used to construct a valid test. In this paper, we only focus on the large-scale multiple testing problem and leave the global testing in high dimensions for future research.

\subsection{FDP control for robust dependent tests}

We apply the Bahadur representation \eqref{na.bound} to construct robust dependence-adjusted test statistics for simultaneous inference.  Conventional tasks of large-scale multiple testing, including controlling the FDR/FDP or FWER,  have been extensively explored and are now well understood when the test statistics are independent [\cite{BH1995}, \cite{S2002}, \cite{GW2004}, \cite{ LR2005}]. It is becoming increasingly important to understand and incorporate the dependence information among multiple test statistics. Under the positive regression dependence condition, the FDR control can be conducted in the same manner as that for the independent case [\cite{BY2001}], which provides a conservative upper bound. For more general dependence, directly applying standard FDR control procedures developed for independent $P$-values may lead to inaccurate false discovery rate control and include too many spurious discoveries [\cite{E2004, E2007}, \cite{SC2009}, \cite{CH2009}, \cite{SL2011}, \cite{FHG2012}]. In this more challenging situation, various multi-factor models have been used to investigate the dependence structure in high dimensional data; see, for example, \cite{LS2008}, \cite{FKC2009},  \cite{DS2012} and \cite{FHG2012}.

The multi-factor model relies on the identification of a linear space of random vectors capturing the dependence structure of the data. \cite{FKC2009} and \cite{DS2012} assume that the data are drawn from a strict factor model with independent idiosyncratic errors. They use the expectation-maximization algorithm to estimate the factor loadings and realized factors in the model, and then obtain an estimator for the FDP by subtracting out realized common factors. These methods, however, depend on stringent model assumptions, including the independence of idiosyncratic errors and joint normality of the factor and noise. In contrast, \cite{FHG2012} and \cite{FH2017} use a more general approximate factor model that allows dependent noise.

Let $\bX = (X_1, \ldots, X_p)^\T $ be a $p$-dimensional random vector with mean $\bmu = (\mu_1,\ldots,\mu_p)^\T $ and covariance matrix $\bSigma = (\sigma_{jk})_{1\leq j,k\leq p}$. We aim to simultaneously test
\begin{align}  \label{multiple.test}
	H_{0j} :  \mu_j = 0 \ \ \mbox{ versus } \ \ H_{1j}: \mu_j \neq 0 , \ \ \mbox{ for } j=1,\ldots , p.
\end{align}
We are interested in the case where there is strong dependence across the components of $\bX$. The approximate factor model assumes that the dependence of a high dimensional random vector $\bX$ can be captured by a few factors, that is,
\begin{align}
	\bX = \bmu + \bB \bff + \bSigma(\bff) \bu ,   \label{factor.model}
\end{align}
from which we observe independent random samples $(\bX_1 , \bff_1 ) , \ldots, (\bX_n , \bff_n )$ satisfying
$$
	\bX_i = (X_{i1}, \ldots, X_{ip})^\T = \bmu + \bB   \bff_i + \bSigma(\bff_i) \bu_i, \ \ i=1,\ldots, n.
$$
Here, $\bB = (\bb_1 , \ldots, \bb_p )^\T \in \bbr^{p \times K}$ represents the factor loading matrix, $\bff_i$ is the $K$-dimensional common factor to the $i$th observation and is independent of the idiosyncratic noise $\bu_i \in \bbr^p$, and $\bSigma(\bff)=\diag( \sigma_1(\bff), \ldots, \sigma_p(\bff))$ with $\sigma_j(\cdot): \bbr^K \mapsto (0,\infty)$ as unknown variance heteroscedasticity functions. We allow the components of $\bu_i$ to be dependent. To fully understand the influence of heavy tailedness, in this paper we restrict our attention to such an observable factor model.

For testing the hypotheses in \eqref{multiple.test} under model \eqref{factor.model}, when the factor $\bff$ is unobserved, a popular and natural approach is based on (marginal) sample averages of $\{\bX_i\}$ with a focus on the valid control of FDR [\cite{BY2001}, \cite{GW2004}, \cite{E2007}, \cite{SC2009}, \cite{SL2011}, \cite{FH2017}].  As pointed out in \cite{FHG2012}, the power of such an approach is dominated by the factor-adjusted approach that produces alternative rankings of statistical significance from those of the marginal statistics.  They focus on a Gaussian model where both $\bff$ and $\bu$ follow multivariate normal distributions, while statistical properties of the corresponding test procedure on FDR/FDP control remain unclear. The normality assumption, however, is really an idealization which provides insights into the key issues underlying the problems.
Data subject to heavy-tailed and asymmetric errors are repeatedly observed in many fields of research [\cite{FR2003}].
For example, it is known that financial returns typically exhibit heavy tails. The important papers by \cite{M1963} and \cite{F1963} provide evidence of power-law behavior in asset prices in the early 1960s.  Since then, the non-Gaussian character of the distribution of price changes has been widely observed in various market data. \cite{C2001} provides further evidence showing that a Student's $t$-distribution with four degrees of freedom displays a tail behavior similar to many asset returns.

For multiple testing with heavy-tailed data, the least squares based test statistics are sensitive to outliers and thus lack robustness. This issue is amplified further by high dimensionality: When the dimension is large,  even moderate tails may lead to significant false discoveries. This motivates us to develop new test statistics that are robust to the tails of error distributions. Also, since the multiple testing problem is more complicated with dependent data, theoretical guarantees of the FDP control for the existing dependence-adjusted methods remain unclear.

To illustrate the impact of heavy tailedness, we generate independent and identically distributed (i.i.d.) random variables $\{ X_{i j}, \, i= 1, \ldots , 30 , \, j = 1, \ldots, 10000 \}$ from a normalized $t$-distribution with $2.5$ degrees of freedom. 
In Figure~\ref{fig:histogram}, we compare the histogram of the empirical means $\overline{X}_j$ with that of the robust mean estimates constructed using \eqref{def.thetatau} without covariates, after rescaling both estimators by $\sqrt{30}$. For a standard normal distribution, we expect $99.73\%$ data points to lie within three standard deviations of the mean or inside $[-3,3]$. Hence for this experiment, if the distribution of the estimator is indeed approximately normal, we would expect about $27$ out of $10000$ realizations to lie outside $[-3,3]$. From Figure~\ref{fig:histogram} we see that the robust procedure gives 28 points that fall outside this interval, whereas the sample average gives a much larger number, 79, many of which would surely be falsely regarded as signals. We see that in the presence of heavy tails and high dimensions, the robust method leads to a more accurate normal tail approximation than using a nonrobust one. In fact, for the empirical means, many of them even fall outside $[-6,6]$. This inaccuracy in tail approximation for the nonrobust estimator gives rise to false discoveries. In summary, outliers from the test statistics $\overline{X}_j$ can be so large that they are mistakenly regarded as discoveries, whereas the robust approach results in fewer outliers.

\begin{figure}        \centering
                \includegraphics[width=.85\textwidth,]{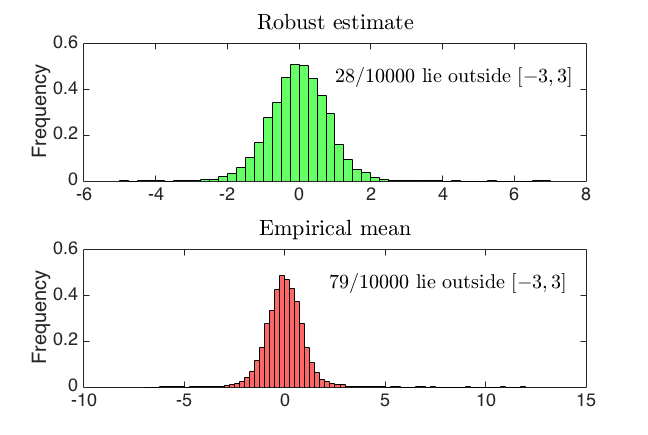}
                \caption{Histograms of 10000 robust mean estimates and empirical means based on 30 i.i.d. samples drawn from a normalized $t$-distribution with $2.5$ degrees of freedom. Both the mean estimates are rescaled by $\sqrt{30}.$ 
                }
                \label{fig:histogram}
\end{figure}

In Section~\ref{sec3}, we develop robust dependence-adjusted multiple testing procedures with solid theoretical guarantees. We use the approximate factor model \eqref{factor.model} with an observable factor and relatively heavy-tailed errors to characterize the dependence structure in high dimensional data. Assuming such a model, we construct robust test statistics based on the Huber estimator with a diverging tuning parameter, denoted by $T_1,\ldots, T_p$, for testing the individual hypotheses. At a prespecified level $0< \alpha <1$, we apply a family of FDP controlling procedures to the dependence-adjusted $P$-values $\{ P_j = 2\Phi(-|T_j|) \}_{j=1}^p$ to decide which null hypotheses are rejected, where $\Phi$ is the standard normal distribution function. To justify the validity of the resulting procedure on FDP control, a delicate analysis of the impact of dependence-adjustment on the distribution of the $P$-values is required. We show that, under mild moment and regularity conditions, the robust multiple testing procedure controls the FDP at any prespecified level asymptotically. Specifically, applying Storey's procedure [\cite{S2002}] to the above $P$-values gives a data-driven rejection threshold $\hat{z}_{{\rm N}}$ such that $H_{0j}$ is rejected whenever $|T_j| \geq \hat{z}_{{\rm N}}$. Let $\FDP(z) = V(z)/ \max\{ 1 , R(z)\}$ be the FDP at threshold $z\geq 0$, where $V(z) = \sum_{j=1}^p 1(|T_j| \geq z, \mu_j=0)$ and $R(z) = \sum_{j=1}^p 1(|T_j| \geq z)$ are the number of false discoveries and the number of total discoveries, respectively. In the ultra-high dimensional setting that $p$ can be as large as $e^{n^c}$ for some $0<c< 1$, we prove that
\bee
	{p \over p_0}\; \FDP(\hat{z}_{{\rm N}})   \to \alpha ~\mbox{ in probability}  \label{FDP.convergence}
\eee
as $(n,p) \to \infty$, where $p_0 = \sum_{j=1}^p 1(\mu_j = 0)$ is the number of true null hypotheses. We also illustrate the usefulness of the robust techniques by contrasting the performances of robust and least squares based inference procedures through synthetic numerical experiments.

Key technical tools in proving \eqref{FDP.convergence} are the Berry-Esseen bound and Cram\'er-type moderate deviation for the marginal statistic $T_j$. These results are built upon the nonasymptotic Bahadur representation \eqref{na.bound}, and may be of independent interest for other statistical applications. For example, \cite{DHJ2011} explore moderate and large deviations of the $t$-statistic in a variety of high dimensional settings.

\subsection{Organization of the paper}

The lay-out of the paper is as follows. In Section~\ref{sec2}, we develop a general finite sample theory for Huber's robust  $M$-estimator from a new perspective where a diverging tuning parameter is involved. In Section~\ref{sec3}, we propose a robust dependence-adjusted multiple testing procedure with rigorous theoretical guarantees. Section~\ref{sec4} consists of numerical studies and real data analysis. The simulation study provides empirical evidence that the proposed robust inference procedure improves performance in the presence of asymmetric and heavy-tailed errors, and maintains efficiency under light-tailed situations.   A discussion is given in Section~\ref{sec5}.
Proofs of the theoretical results in Sections~\ref{sec2}  and \ref{sec3} are provided in the supplemental material [\cite{ZBFL2017}].

\bigskip
\noindent
{\sc Notation.} For a vector $\bu = (u_1,\ldots, u_p)^\T \in \bbr^p$ ($p\geq 2$), we use $\| \bu \| =  ( \sum_{j=1}^p u_j^2 )^{1/2}$ to denote its $\ell_2$-norm. Let $\mathbb{S}^{p-1}=\{ \bu \in \bbr^p : \| \bu \| = 1 \}$ represent the unit sphere in $\bbr^p$. For a matrix $\bA \in \bbr^{p\times p}$, $\| \bA \| = \sup_{\bu \in \mathbb{S}^{p-1} } \| \bA \bu \|$ denotes the spectral norm of $\bA$. For any two sequences $\{ a_n \}$ and $\{ b_n \}$ of positive numbers, denote by $a_n \asymp b_n$ when $c b_n \leq a_n \leq C b_n$ for some absolute constants $C\geq c >0$, denote by $a_n \sim b_n$ if $a_n / b_n \to 1$ as $n\to \infty$. Moreover, we write $a_n = O(b_n)$ if $a_n \leq C b_n$ for some absolute constant $C>0$, write $a_n = o(b_n)$ if $a_n / b_n \to 0$ as $n\to \infty$, and write $a_n = o_\mathbb{P}(b_n)$ if $a_n / b_n \to 0$ in probability as $n\to \infty$. For a set $S$, we use $S^{{\rm c}}$ to denote its complement and $\Card(S)$ for its cardinality. For $x\in \bbr$, denote by $\floor{x}$ the largest integer not greater than $x$ and $\ceil{x}$ the smallest integer not less than $x$. For any two real numbers $a$ and $ b$, we write $a\vee b = \max(a,b)$ and $a\wedge b = \min(a,b)$.

\section{Robust  $M$-estimation: A finite sample theory}
\label{sec2}

Consider a heteroscedastic linear regression model $Y = \mu^* +\bX^\T \bbeta^* + \sigma(\bX) \varepsilon$, from which we observe independent samples $\{(Y_i, \bX_i)\}_{i=1}^n$ satisfying
\bee
 Y_i = \mu^* + \bX_i^\T \bbeta^* + \sigma(\bX_i) \varepsilon_i ,   \ \  i =1,\ldots , n,  \label{linear.model}
\eee
where $\mu^*$ is the intercept, $\bX \in \bbr^d$ is the vector of covariates, $\bbeta^* \in \bbr^d$ is the vector of regression coefficients, $\varepsilon$ is the random error independent of $\bX$, and $\sigma(\cdot):\bbr^d \mapsto (0, \infty)$ is an unknown variance function. We assume both $\bX$ and $\varepsilon$ have zero means. Under this assumption, $\mu^*$ and $\bbeta^*$ together are related to the conditional mean effect of $Y$ given $\bX$, and $\mu^*$ is the unconditional mean of $Y$ that is of independent interest in many applications. For simplicity, we introduce the following notations:
\begin{align}
    \btheta^* = (\mu^*,  \bbeta^{* \T})^\T \in \bbr^{d+1}  , \quad
  \bZ = (1, \bX^\T)^\T  \in \bbr^{d+1}, \quad  \nu =\sigma(\bX) \varepsilon ,  \nn \\
     \mbox{ and }~ \bZ_i = (1,\bX_i^\T)^\T ,  \quad  \nu_{i} = \sigma(\bX_i) \varepsilon_i , \ \   i =1,\ldots, n. \nn
\end{align}

In this section, we study the robust estimator of $\btheta^*$ defined in \eqref{robust.est}. In particular, we show that  it admits an exponential-type deviation bound even for heavy-tailed error distributions. Note that, under the heteroscedastic model \eqref{linear.model}, $\btheta^*$ differs from the median effect of $Y$ conditioning on $\bX$, so that the LAD-based methods are not applicable to estimate $\btheta^*$. Instead, we focus on Huber's robust estimator $\hat{\btheta}$ given in \eqref{robust.est} with a diverging tuning parameter  $\tau=\tau_n$ that balances the approximation error and robustness of the estimator. To begin with, we make the following conditions on the linear model \eqref{linear.model}.

\begin{assumption} \label{cond2.1}
{\rm (i) The random vector $\bX \in \bbr^d$ satisfies $\e(\bX) = \mo$, $\e(\bX \bX^\T) = \bSigma $ for some positive definite matrix $\bSigma$ and $K_0 :=  \| \bSigma^{-1/2} \bX \|_{\psi_2} <\infty$, where $\| \cdot \|_{\psi_2}$ denotes the vector sub-Gaussian norm [\cite{V2012}]. (ii) Independent of $\bX$, the error variable $\varepsilon$ satisfies $\e ( \varepsilon) =0$ and $\e( \varepsilon^2 )  =1$. (iii) $\sigma(\cdot): \bbr^d \mapsto (0,\infty)$ is a positive function and $\sigma^2 := \e\{\sigma^2(\bX) \}$ is finite. }
\end{assumption}

Condition~\ref{cond2.1} allows a family of conditional heteroscedastic models with heavy-tailed error $\varepsilon$. Specifically, it only requires the second moment of $\nu = \sigma(\bX) \varepsilon$ to be finite. Under this condition, our first result, Theorem~\ref{RAthm1}, provides an exponential-type deviation bound and a nonasymptotic Bahadur representation for the robust estimator $\hat{\btheta}=(\hat{\mu}, \hat{\bbeta}^\T)^\T$ defined in \eqref{robust.est}.

\begin{theorem} \label{RAthm1}
Under the linear  model \eqref{linear.model} with Condition~\ref{cond2.1} satisfied, we have for any $w >0$ that, the robust estimator $\hat{\btheta}$ in \eqref{robust.est} with $ \tau = \tau_n = \tau_0  \sqrt{n} (d+1+w)^{-1/2}$ and $\tau_0 \geq \sigma$ satisfies
\begin{align}
	\mathbb{P}\big\{  \| \bS^{1/2}(\hat{\btheta} - \btheta^*) \|  >  a_1     (d+ w)^{1/2} n^{-1/2} \big\} \leq 7 e^{-w} \ \ \mbox{ and }     \label{concentration.MLE} \\
	\mathbb{P}\bigg\{  \bigg\|   \bS^{1/2}(\hat{\btheta} - \btheta^*)   - \frac{1}{  n} \sn \ell'_\tau( \nu_i  ) \bS^{-1/2} \bZ_i \bigg\|  >  a_2  \frac{d+w}{n} \bigg\} \leq 8 e^{-w}   \label{Bahadur.representation}
\end{align}
as long as $n \geq a_3  (d+w)^{3/2}$, where $\bS = \e (\bZ \bZ^\T)$ and $a_1$--$a_3$ are positive constants depending only on $\tau_0, K_0$ and $\| \bS^{-1/2}\overline{\bS} \bS^{-1/2} \|$ with $\overline{\bS} = \e \{ \sigma^2(\bX)\bZ \bZ^\T \}$.
\end{theorem}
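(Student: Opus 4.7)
Let $\hat{\bdelta} = \hat{\btheta} - \btheta^*$ and $\nu_i = \sigma(\bX_i)\varepsilon_i$. Because $\ell_\tau$ is convex and $C^1$ with $|\ell'_\tau|\leq \tau$ and $\ell'_\tau$ being $1$-Lipschitz, $\hat{\btheta}$ satisfies the exact score equation
\begin{align*}
\frac{1}{n}\sum_{i=1}^n \ell'_\tau(\nu_i - \bZ_i^\T\hat{\bdelta})\,\bZ_i = \mathbf{0}.
\end{align*}
The plan is to prove \eqref{concentration.MLE} first via a score-plus-restricted-strong-convexity (RSC) argument, and then use the resulting localization of $\hat{\bdelta}$ to bootstrap the Bahadur expansion \eqref{Bahadur.representation} by expanding $\ell'_\tau$ around $\nu_i$.

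\textbf{Stage A (deviation).} Set $\bW_n = n^{-1}\sum_i \ell'_\tau(\nu_i)\bS^{-1/2}\bZ_i$. Two inputs suffice. First, $\|\bW_n\|\lesssim \sqrt{(d+w)/n}$ with probability $\geq 1-e^{-w}$: this follows from a covering of the unit sphere in $\bbr^{d+1}$ together with a scalar Bernstein bound, using $|\ell'_\tau(\nu)|\leq \tau$, the conditional variance $\e\{\ell'_\tau(\nu)^2\mid \bX\}\leq \sigma^2(\bX)$ (whence the constant in the bound picks up $\|\bS^{-1/2}\overline{\bS}\bS^{-1/2}\|$), and the $O(\tau^{-1})$ bias of the score given by Proposition~\ref{RAlem1}; the choice $\tau\asymp \sqrt{n/(d+w)}$ balances fluctuation and bias. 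Second, on the ball $\{\|\bS^{1/2}\bdelta\|\lesssim \tau\}$ the loss is locally quadratic on $\{|\nu|\leq \tau/2\}$, giving an RSC bound of the form $\langle \nabla L_n(\btheta^*+\bdelta)-\nabla L_n(\btheta^*),\bdelta\rangle\gtrsim \bdelta^\T\bS\bdelta$ uniformly over such $\bdelta$. The population version is a direct calculation once $\tau\gtrsim \sigma$, and the empirical version is obtained by a Talagrand-type concentration for the Bregman divergence combined with a peeling step in $\|\bS^{1/2}\bdelta\|$. Convexity of the sample loss plus the first-order condition plus RSC then force $\|\bS^{1/2}\hat{\bdelta}\|\lesssim \sqrt{(d+w)/n}$ on an event of probability $\geq 1-ce^{-w}$, which is \eqref{concentration.MLE}.

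\textbf{Stage B (Bahadur).} Since $\ell'_\tau$ has almost-everywhere derivative $1(|\cdot|\leq \tau)$, the fundamental theorem of calculus yields
\begin{align*}
\ell'_\tau(\nu_i) - \ell'_\tau(\nu_i - t) = t\,1(|\nu_i|\leq \tau) + r_i(t),\qquad |r_i(t)|\leq |t|\cdot 1\big(\big||\nu_i|-\tau\big|\leq |t|\big).
\end{align*}
Plugging $t=\bZ_i^\T\hat{\bdelta}$ into the optimality equation and rearranging gives
\begin{align*}
\widehat{\bH}\,\hat{\bdelta} = \bg_n - \bR_n,\qquad \widehat{\bH} = \frac{1}{n}\sum_{i=1}^n 1(|\nu_i|\leq \tau)\bZ_i\bZ_i^\T,\qquad \bg_n = \frac{1}{n}\sum_{i=1}^n \ell'_\tau(\nu_i)\bZ_i,
\end{align*}
with $\bR_n = n^{-1}\sum_i r_i(\bZ_i^\T\hat{\bdelta})\bZ_i$. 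The rest requires two ingredients: (i) $\|\bS^{-1/2}(\widehat{\bH}-\bS)\bS^{-1/2}\| = O_\bbP(\tau^{-2} + \sqrt{(d+w)/n})$, proved via matrix Bernstein on the bounded, sub-exponential summands (the bias $\bS-\e\widehat{\bH} = \e\{1(|\nu|>\tau)\bZ\bZ^\T\}$ has spectral norm $O(\tau^{-2})$ by Markov in $\nu^2$); and (ii) $\|\bS^{-1/2}\bR_n\| = O_\bbP((d+w)/n)$ on the Stage-A event. Given (i) and (ii), the identity
\begin{align*}
\bS^{1/2}\hat{\bdelta} - \bS^{-1/2}\bg_n = \bS^{1/2}\widehat{\bH}^{-1}(\bS - \widehat{\bH})\bS^{-1}\bg_n - \bS^{1/2}\widehat{\bH}^{-1}\bR_n
\end{align*}
has right-hand side of norm $[\tau^{-2}+\sqrt{(d+w)/n}]\cdot\sqrt{(d+w)/n} + (d+w)/n = O((d+w)/n)$ for the prescribed $\tau$, which is exactly \eqref{Bahadur.representation}.

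\textbf{Main obstacle.} The hardest step is (ii): uniform control of the remainder process
\begin{align*}
\sup_{\|\bS^{1/2}\bdelta\|\leq r}\bigg\|\bS^{-1/2}\frac{1}{n}\sum_{i=1}^n r_i(\bZ_i^\T\bdelta)\bZ_i\bigg\| = O_\bbP(r^2),\qquad r\asymp \sqrt{(d+w)/n}.
\end{align*}
Because $\ell'_\tau$ has jump discontinuities in its derivative at $\pm\tau$, no smoother pointwise bound than $|r_i(t)|\leq |t|\cdot 1(||\nu_i|-\tau|\leq |t|)$ is available, and the expected summand involves the boundary mass $\bbP(||\nu|-\tau|\leq |\bZ^\T\bdelta|)$, which I would upper bound by $|\bZ^\T\bdelta|\,\e\nu^2/\tau^2$ via Markov to extract the crucial second factor of $r$. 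Fluctuations around this mean are then handled by symmetrization and a generic-chaining/Dudley bound for the localized empirical process, exploiting the $1$-Lipschitzness of $\ell'_\tau$ and the sub-Gaussianity of $\bS^{-1/2}\bZ$; the extra $(d+w)^{1/2}$ factor from the chaining integral, combined with the $\tau^{-2}\lesssim (d+w)/n$ and matrix-Bernstein contributions, is precisely what enforces the sample-size condition $n\geq a_3(d+w)^{3/2}$.
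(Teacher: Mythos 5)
Your two-stage architecture — first a sub-Gaussian deviation bound via convexity plus local curvature plus score control, then a Bahadur expansion obtained by linearizing on the localization event — is exactly the architecture of the paper's proof, but the execution of both stages differs in which tools carry the load. For Stage A, the paper lower-bounds the minimum eigenvalue of the sample Hessian over the local ball via the pointwise inequality $1(|Y_i-\bZ_i^\T\btheta|\leq\tau)\geq 1(|\nu_i|+\|\bzeta_i\|r\leq\tau)$ together with Vershynin's sub-Gaussian covariance concentration, controls $\|\bS^{-1/2}\nabla L(\btheta^*)\|$ by Corollary~1.13 of Spokoiny (2012) on quadratic forms, and forces $\hat{\btheta}$ into the ball with the convexity/localization lemma of Fan et al.\ (2015); your peeling/RSC step and covering-plus-Bernstein score bound play the same roles. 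For Stage B the routes genuinely diverge: the paper never inverts a truncated empirical Hessian, but instead verifies two abstract conditions — $(\mathcal{L}_0)$, a deterministic local bound $\delta(r)\asymp\tau^{-2}(\|\bDelta\|+K_1^4r^2)$ on the expected Hessian, and $(ED_2)$, exponential moments for the second derivative of the centered empirical process — and then quotes Proposition~3.1 of Spokoiny (2013), which delivers the uniform linearization error $\delta(r)r+C\sqrt{(d+w)/n}\,r$ over the ball in one stroke. Your hands-on alternative (score-equation inversion with $\hat{\bH}$, matrix concentration for $\hat{\bH}-\bS$, chaining for the remainder process $\bR_n$) is a legitimate self-contained substitute that yields the same rate $(d+w)/n$; what the paper's route buys is that the empirical-process work you call the ``main obstacle'' is packaged inside Spokoiny's general result, at the price of verifying his conditions.

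One claim in your sketch is wrong as stated, though harmlessly so. Markov's inequality does not give $\bbP(\,||\nu|-\tau|\leq a)\leq a\,\e(\nu^2)/\tau^2$: an annulus probability proportional to the bandwidth $a$ requires anti-concentration or a density bound for $\nu$ near $\pm\tau$, which is unavailable under a bare second-moment assumption (consider $\nu$ with an atom at $\tau$). What Markov does give, after conditioning on $\bX$, is $\bbP(\,||\nu|-\tau|\leq a \mid \bX)\leq\bbP(|\nu|\geq\tau-a\mid\bX)\leq\sigma^2(\bX)/(\tau-a)^2\lesssim\sigma^2(\bX)\tau^{-2}$ for $a\leq\tau/2$, and this weaker bound suffices: with the prescribed $\tau\asymp\sqrt{n/(d+w)}$ one has $\tau^{-2}\asymp(d+w)/n=r^2$, so the mean of your remainder is already $O(r\tau^{-2})=O(r^3)\ll r^2$ — the ``crucial second factor of $r$'' comes from $\tau^{-2}$, not from the boundary mass. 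With that correction, and the variance of the remainder summands likewise bounded by $O(r^2\tau^{-2})$ so that Bernstein plus chaining controls the fluctuations at order $o(r^2)$, your Stage B closes.
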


An important message of Theorem~\ref{RAthm1} is that, even for heavy-tailed errors with only finite second moment, the robust estimator $\hat{\btheta}$ with properly chosen $\tau$ has sub-Gaussian tails. See inequality \eqref{concentration.MLE}. To some extent, the tuning parameter $\tau$ plays a similar role as the bandwidth in constructing nonparametric estimators. Furthermore, we show in \eqref{Bahadur.representation} that the remainder of the Bahadur representation for $\hat{\btheta}$ exhibits sub-exponential tails. To the best of our knowledge, no nonasymptotic results of this type exist in the literature, and classical asymptotic results can only be used to derive polynomial-type deviation bounds.

Write $W_n := n^{-1/2} \sn \ell'_\tau(\nu_i )$.  As a direct consequence of \eqref{Bahadur.representation}, $\sqrt{n}\, (\hat{\mu} - \mu^*)$ is close to $W_n$ with  probability approaching one exponentially fast. The next result shows that, under higher moment condition on $\nu = \sigma(\bX) \varepsilon$,  $W_n$ has an asymptotic normal distribution with mean 0 and variance $\sigma^2 = \e (\nu^2)$.

\begin{theorem}  \label{RAthm2}
Assume Condition~\ref{cond2.1} holds and $v_\kappa := \e( |\nu |^\kappa )$ is finite for some $\kappa \geq 3$. Then, there exists an absolute constant $C >0$ such that for any $\tau>0$,
\begin{align}
& \sup_{x\in \bbr}  | \mathbb{P}(  \sigma^{-1} W_n \leq x )  - \Phi(x)   |  \nn \\
& \quad \leq  C  \big(   \sigma^{-3} v_3 \, n^{-1/2} + \sigma^{-2} v_\kappa  \, \tau^{2-\kappa}   + \sigma^2 \tau^{-2}  +  \sigma^{-1} v_\kappa \, \tau^{1-\kappa} \sqrt{n}  \big). \nn
\end{align}
In particular, we have
\begin{align}
\sup_{x\in \bbr}  | \mathbb{P}(  \sigma^{-1} W_n \leq x )      - \Phi(x)  | \leq  C  \big(   \sigma^{-3} v_3 \, n^{-1/2} + \sigma^{-2} v_4  \,\tau^{-2}     +  \sigma^{-1} v_4 \, \tau^{-3} \sqrt{n}    \big). \nn
\end{align}
\end{theorem}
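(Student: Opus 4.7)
The plan is to view $W_n=n^{-1/2}\sum_{i=1}^n\xi_i$ as a rescaled sum of the bounded i.i.d.\ random variables $\xi_i:=\ell'_\tau(\nu_i)$, to which the classical i.i.d.\ Berry--Esseen inequality applies. The catch is that $\xi_1$ has mean $\mu_\tau:=\mathbb{E}\xi_1\neq 0$ and variance $\sigma_\tau^2:=\Var(\xi_1)\neq\sigma^2$ in general, because truncation by $\tau$ destroys the mean-zero property for asymmetric $\nu$ and depresses the second moment. After invoking Berry--Esseen at the natural scaling $(\mu_\tau,\sigma_\tau^2)$, I would change variables to express the Kolmogorov distance at the target scaling $(0,\sigma^2)$, at the cost of three perturbation corrections corresponding to bias, variance gap, and a Chebyshev backstop.

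First I collect the key moment estimates. Since $|\xi_1|\le|\nu_1|$, one has $\mathbb{E}|\xi_1|^3\le v_3$. Because $\mathbb{E}\nu=0$ we may write $\mu_\tau=\mathbb{E}[\ell'_\tau(\nu)-\nu]$, whose integrand vanishes on $\{|\nu|\le\tau\}$, yielding
\begin{equation*}
|\mu_\tau|\le\mathbb{E}[|\nu|\, 1(|\nu|>\tau)]\le v_\kappa\tau^{1-\kappa}
\end{equation*}
by Markov's inequality. Similarly, $\sigma^2-\sigma_\tau^2=\mathbb{E}[(\nu^2-\tau^2)1(|\nu|>\tau)]+\mu_\tau^2\le v_\kappa\tau^{2-\kappa}+\mu_\tau^2$; the cruder Chebyshev bound $\mathbb{P}(|\nu|>\tau)\le\sigma^2/\tau^2$ will serve as a reserve in regimes where this moment estimate is not small. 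Since also $\mathbb{E}|\xi_1-\mu_\tau|^3\lesssim v_3$, the classical Berry--Esseen theorem delivers
\begin{equation*}
\sup_{y\in\mathbb{R}}\bigl|\mathbb{P}\bigl(\sigma_\tau^{-1}(W_n-\sqrt n\,\mu_\tau)\le y\bigr)-\Phi(y)\bigr|\lesssim v_3/(\sigma_\tau^3\sqrt n),
\end{equation*}
and once $\tau$ is large enough to ensure $\sigma_\tau\ge\sigma/2$ this is $\lesssim\sigma^{-3}v_3 n^{-1/2}$.

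To convert this into a bound on $\sup_x|\mathbb{P}(\sigma^{-1}W_n\le x)-\Phi(x)|$, I would substitute $y=(\sigma x-\sqrt n\,\mu_\tau)/\sigma_\tau=:ax+b$ with $a=\sigma/\sigma_\tau$ and $b=-\sqrt n\,\mu_\tau/\sigma_\tau$, and bound the target Kolmogorov distance by the Berry--Esseen term above plus $\sup_x|\Phi(ax+b)-\Phi(x)|$. The latter is handled by the elementary inequality $\sup_x|\Phi(ax+b)-\Phi(x)|\le C(|a-1|+|b|)$, which I would prove by locating the critical point $x_\ast^2=2\log a/(a^2-1)$ of $\Phi(ax)-\Phi(x)$ and applying the Lipschitz bound $|b|/\sqrt{2\pi}$ for the shift. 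Plugging in $|a-1|\lesssim|\sigma^2-\sigma_\tau^2|/\sigma^2$ and $|b|\lesssim\sqrt n\,|\mu_\tau|/\sigma$ yields the terms $\sigma^{-2}v_\kappa\tau^{2-\kappa}$ and $\sigma^{-1}v_\kappa\tau^{1-\kappa}\sqrt n$, while the $\sigma^2\tau^{-2}$ term in the theorem is the Chebyshev backstop invoked in the regime where $\sigma_\tau$ is not close to $\sigma$. The special case $\kappa=4$ then follows by Cauchy--Schwarz, $\sigma^4=(\mathbb{E}\nu^2)^2\le\mathbb{E}\nu^4=v_4$, which absorbs $\sigma^2\tau^{-2}$ into $\sigma^{-2}v_4\tau^{-2}$. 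The main technical obstacle is the bookkeeping in this change-of-scaling step, in particular verifying the uniform-in-$x$ inequality $\sup_x|\Phi(ax+b)-\Phi(x)|\le C(|a-1|+|b|)$ and cleanly patching in the boundary regime where $\sigma_\tau$ is not yet close to $\sigma$.
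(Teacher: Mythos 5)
Your proposal is correct and follows essentially the same route as the paper's proof: apply the classical i.i.d.\ Berry--Esseen inequality to the truncated summands at their own scaling $(\mu_\tau,\sigma_\tau^2)$, invoke the moment bounds $|\mu_\tau|\le \min(\sigma^2/\tau,\,v_\kappa\tau^{1-\kappa})$ and $|\sigma^2-\sigma_\tau^2|\lesssim v_\kappa\tau^{2-\kappa}$ (the paper's Proposition~\ref{RAlem2}, which you rederive inline), and then transfer to the target scaling via Gaussian perturbation estimates, with the regime where $\sigma_\tau$ is not close to $\sigma$ absorbed trivially by the $\sigma^2\tau^{-2}$ and $\sigma^{-2}v_\kappa\tau^{2-\kappa}$ terms. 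The only cosmetic difference is that the paper compares $N(0,\sigma_\tau^2/\sigma^2)$ with $N(0,1)$ via a cited Gaussian comparison lemma and handles the mean shift $\sigma^{-1}m_\tau\sqrt{n}$ separately through anti-concentration, whereas you fold both corrections into the single affine bound $\sup_{x}|\Phi(ax+b)-\Phi(x)|\le C(|a-1|+|b|)$.
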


Together, Theorems~\ref{RAthm1} and \ref{RAthm2} lead to a Berry-Esseen type bound for $T:= \sqrt{n}  (\hat{\mu}  - \mu^*  ) /\sigma $ for properly chosen $\tau$. In addition, the following theorem gives a Cram\'er-type moderation deviation result for $T$, which quantifies the relative error of the normal approximation.

\begin{theorem}  \label{RAthm3}
Assume Condition~\ref{cond2.1} is met, $\e (| \nu|^3 )<\infty$ and let $\{ w_n \}_{n\geq 1}$ be an arbitrary sequence of positive numbers satisfying $ w_n \to \infty$ and $w_n = o(\sqrt{n})$. Then, the statistic $T$ with $\tau=  \tau_0 \sqrt{n}   (d+w_n)^{-1/2}$ for some constant $\tau_0 \geq \sigma $ satisfies
\begin{align}
	\mathbb{P}\big( | T | \geq z \big) = \big(1+C_{n,z} \big)\mathbb{P}\big(|G| \geq z \big)  \label{fs.GAR.hatT}
\end{align}
uniformly for $0\leq  z =o\{ \min(  \sqrt{w_n}  ,  \sqrt{n}w_n^{-1} ) \}$ as $n\to \infty$, where $G\sim N(0,1)$,
$$
	|C_{n,z} | \leq  C \big\{  \big( \sqrt{\log n} + z \big)^3 n^{-1/2} + (1+z) \big( n^{-3/10}   +  n^{-1/2} w_n \big) +   e^{-w_n} \big\}
$$
and $C>0$ is a constant independent of $n$. In particular, we have
\begin{align}
\sup_{0\leq  z \leq  o\{ \min(  \sqrt{w_n}, \sqrt{n}w_n^{-1} ) \}} \bigg| \frac{\mathbb{P}(| {T}|\geq z)}{  2 -  2\Phi(z)  } - 1 \bigg| \to 0.  \label{md.hatT}
\end{align}
\end{theorem}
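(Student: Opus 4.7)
The plan is a three-step reduction: first linearize $T$ via the Bahadur representation \eqref{Bahadur.representation}, then handle the bias and variance of the resulting centered linear statistic, and finally invoke a classical Cram\'er-type moderate deviation for bounded i.i.d.\ summands. Set $\tau = \tau_n = \tau_0\sqrt{n/(d+w_n)} \asymp \sqrt{n/w_n}$, and note that the hypothesis $n \geq a_3(d+w_n)^{3/2}$ of Theorem~\ref{RAthm1} is eventually satisfied since $w_n = o(\sqrt{n})$. Because $\e\bX = \mo$, the matrix $\bS = \e(\bZ\bZ^\T)$ is block-diagonal and $\bS^{-1/2}\bZ_i$ has first entry equal to one. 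Reading off the first coordinate of \eqref{Bahadur.representation} with $w = w_n$ therefore yields
\begin{equation*}
T = W_n/\sigma + R_n, \qquad |R_n| \leq a_2 (d+w_n)/(\sigma\sqrt{n}) \lesssim w_n/\sqrt{n},
\end{equation*}
where $W_n = n^{-1/2}\sum_{i=1}^n \ell'_\tau(\nu_i)$ and the bound on $|R_n|$ holds on an event of probability at least $1 - 8 e^{-w_n}$. Using the deterministic sandwich $\mathbb{P}(|W_n/\sigma|\geq z+|R_n|) - 8e^{-w_n} \leq \mathbb{P}(|T|\geq z) \leq \mathbb{P}(|W_n/\sigma|\geq z-|R_n|) + 8e^{-w_n}$ together with Mill's ratio $\mathbb{P}(|G|\geq z\pm \delta)/\mathbb{P}(|G|\geq z) = 1 + O(z\delta)$, the additive bound on $|R_n|$ becomes a multiplicative relative error of order $(1+z) w_n/\sqrt{n}$, and $8e^{-w_n}$ matches the corresponding summand of $|C_{n,z}|$; the restriction $z = o(\sqrt{n}/w_n)$ is precisely what renders $z|R_n|$ negligible.

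Next I replace $W_n$ by a centered, variance-corrected version. Writing $\tilde W_n = n^{-1/2}\sum_{i=1}^n \{\ell'_\tau(\nu_i) - \e \ell'_\tau(\nu)\}$, $b_n = \sqrt{n}\,\e\ell'_\tau(\nu)$, and $\sigma_W^2 = \Var(\ell'_\tau(\nu))$, elementary truncation estimates exploiting $v_3 = \e|\nu|^3 < \infty$ give
\begin{equation*}
|b_n| \leq \sqrt{n}\, v_3/\tau^2 \lesssim w_n/\sqrt{n}, \qquad |\sigma_W^2 - \sigma^2| \lesssim v_3/\tau \lesssim \sqrt{w_n/n}.
\end{equation*}
The bias $b_n$ is absorbed into the same additive shift treated in Step 1. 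For the variance, converting $\mathbb{P}(|\tilde W_n/\sigma|\geq z)$ to $\mathbb{P}(|\tilde W_n/\sigma_W|\geq z\sigma/\sigma_W)$ and applying Mill's ratio contributes an error that, after a standard balance of the intermediate truncation, is captured by the $(1+z) n^{-3/10}$ summand of $|C_{n,z}|$.

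Finally, $\tilde W_n/\sigma_W$ is a normalized sum of i.i.d.\ mean-zero variables bounded by $2\tau$, which uniformly satisfies Cram\'er's exponential-moment condition. A classical Cram\'er-type moderate deviation theorem therefore gives
\begin{equation*}
\mathbb{P}(|\tilde W_n/\sigma_W| \geq z)/\mathbb{P}(|G|\geq z) = 1 + O\bigl((\sqrt{\log n} + z)^3 n^{-1/2}\bigr)
\end{equation*}
uniformly for $0 \leq z \leq c\sqrt{n}/\tau \asymp c\sqrt{w_n}$, matching the leading cubic term in $|C_{n,z}|$ (the $\sqrt{\log n}$ appears because the uniformity is stated over $z$ up to the moderately-large range $z \lesssim \sqrt{\log n}$). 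Concatenating the three steps via the deterministic sandwich yields \eqref{fs.GAR.hatT} with the stated $|C_{n,z}|$, and \eqref{md.hatT} follows because every summand in $|C_{n,z}|$ vanishes whenever $z = o(\min(\sqrt{w_n}, \sqrt{n}/w_n))$.

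The main obstacle is coordinating the two principal constraints on $z$. The Bahadur remainder is innocuous in absolute terms ($|R_n| = O(w_n/\sqrt{n})$) but is amplified to $z|R_n|$ by the normal density ratio, forcing $z \ll \sqrt{n}/w_n$; simultaneously, the Cram\'er MD for bounded summands is only valid up to $z \ll \sqrt{n}/\tau \asymp \sqrt{w_n}$. Threading these two constraints through the exponential bad-event probability $e^{-w_n}$ and the lower-order bias-variance corrections, and carrying the implicit constants through the chain, is the delicate bookkeeping that delivers the explicit form of $|C_{n,z}|$.
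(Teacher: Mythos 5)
Your Steps 1--2 reproduce the paper's own strategy: the paper defines $T_0=\sigma_\tau^{-1}W_{0n}$ with $W_{0n}=n^{-1/2}\sum_i\{\ell'_\tau(\nu_i)-m_\tau\}$, uses Theorem~\ref{RAthm1} and Proposition~\ref{RAlem2} to get $|T-T_0|\leq \delta_n \asymp n^{-1/2}(d+w_n)$ on an event of probability $1-Ce^{-w_n}$, and then converts this additive shift into a relative error via the sandwich \eqref{uni.bd} and the Gaussian-tail perturbation \eqref{Gaussian.perturbation}. Two small remarks there: the paper absorbs the variance mismatch $|\sigma^2-\sigma_\tau^2|\lesssim v_3/\tau$ into the \emph{additive} shift by multiplying it with $\sqrt{n}|\hat\mu-\mu^*|=O_{\mathbb{P}}(\sqrt{w_n})$ from \eqref{concentration.MLE}, whereas your rescaling of the threshold produces a relative error of order $z^2\sqrt{w_n/n}$; this is fine, but it is dominated by the $(1+z)n^{-1/2}w_n$ summand of $C_{n,z}$ (since $z\lesssim\sqrt{w_n}$), not by the $(1+z)n^{-3/10}$ summand as you claim.

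The genuine gap is Step 3. The summands $\ell'_\tau(\nu_i)$ are bounded by $\tau=\tau_n\asymp\sqrt{n/w_n}\to\infty$, so you are facing a triangular array whose exponential-moment (Cram\'er/Bernstein) parameter degrades with $n$; there is no classical theorem for a fixed distribution to invoke, and the standard triangular-array versions under Bernstein's condition yield errors governed by $\Delta_n\asymp\sqrt{n}/\tau_n\asymp\sqrt{w_n}$ rather than by $\sqrt{n}$: a Cram\'er-series term of order $z^3/\sqrt{w_n}$, which is not even $o(1)$ over the whole range $z=o(\sqrt{w_n})$, and a bracket error of order $(1+z)w_n^{-1/2}$, which for $w_n\ll n^{1/3}$ (e.g., $w_n\asymp n^{1/5}$, precisely the choice used in Section~3) strictly exceeds the claimed $(1+z)(n^{-3/10}+n^{-1/2}w_n)$. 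To obtain the stated $C_{n,z}$ one needs a moderate-deviation theorem whose error is driven by the \emph{third moment} of the summands, which stays bounded ($\e|\ell'_\tau(\nu)|^3\leq v_3$) even though their sup-norm diverges; this is exactly what the paper imports from Lemma~3.1 in the supplement of \cite{LS2014}, applied with $c_n\asymp w_n^{-1/2}$, $d_n=n^{-3/10}$, $t_n\asymp\sqrt{\log n}+n^{-1/2}x$ --- and that truncation scheme, not ``uniformity up to $z\lesssim\sqrt{\log n}$,'' is where the $\sqrt{\log n}$ and the $n^{-3/10}$ in $C_{n,z}$ come from. You could alternatively rebuild the result by exponential tilting, checking that the tilt $h\asymp z/\sqrt{n}$ satisfies $h\tau_n\lesssim z/\sqrt{w_n}\leq c$ and that the tilted third-moment ratio remains $O(1)$, but that is a re-derivation of such a refined triangular-array theorem, not an application of a classical one; as written, the step your whole bookkeeping rests on is unjustified.
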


\begin{remark}
{\rm From Theorem~\ref{RAthm3} we see that the ratio $\mathbb{P}(|T | \geq z)/\{2 -  2\Phi(z)\}$ is close to 1 for a wide range of nonnegative $z$-values, whose length depends on both the sample size $n$ and $w_n$. In particular, taking $w_n  \asymp n^{1/3}$ gives the widest possible range $[0, o(n^{1/6}))$, which is also optimal for Cram\'er-type moderate deviation results [\cite{L1961}]. In this case, the tuning parameter $\tau = \tau_n$ is of order $n^{1/3}$. }
\end{remark}

\begin{remark}
{\rm  Motivated by an application to large-scale simultaneous inference considered in Section~\ref{sec3}, we only focus on the robust intercept estimator $\hat{\mu}$ of $\mu^*$ in Theorems~\ref{RAthm2} and \ref{RAthm3}. In fact, similar results can be obtained for $\hat{\bbeta}$ or a specific coordinate of $\hat{\bbeta}$ based on the Bahadur representation \eqref{Bahadur.representation}.
}
\end{remark}

\section{Large-scale multiple testing for heavy-tailed dependent data}
\label{sec3}

In this section, we propose and analyze a robust dependence-adjusted procedure for simultaneously testing the means $\mu_1,\ldots, \mu_p$ in model \eqref{factor.model}, based on independent observations from the population vector $\bX$ which exhibits strong dependence and heavy tails.

\subsection{Robust test statistics}
\label{sec3.1}

Suppose we are given independent random samples $\{ (\bX_i, \bff_i ) \}_{i=1}^n$ from model \eqref{factor.model}. We are interested in the simultaneous testing of mean effects \eqref{multiple.test}. A naive approach under normality is to directly use the information $X_{ij} \sim N(\mu_j, \sigma_{jj})$ for the dependent case as was done in the literature, where $\sigma_{jj} = \var(X_j)$.  Such an approach is very natural and popular when the factors are unobservable and focus is on the valid control of FDR, but is inefficient as noted in \cite{FHG2012}. Indeed, if the loading matrix $\bB$ is known and the factors are observed (otherwise, replaced by their estimates), for each $j$, we can construct the marginal test statistic using dependence-adjusted observations $\{ X_{ij} - \bb_j^\T \bff_i \}_{i=1}^n$ from $\mu_j + \sigma_j(\bff) u_j$ for testing the $j$th hypothesis $H_{0j}: \mu_j =0$.

We consider the approximate factor model \eqref{factor.model} and write
\begin{align}
	\bu = (u_1,\ldots, u_p)^\T, \quad  \bnu = ( \nu_1, \ldots, \nu_p)^\T = \bSigma(\bff) \bu  ,  \nn \\
	 \bnu_i = (\nu_{i1}, \ldots, \nu_{ip})^\T =\bSigma(\bff_i) \bu_i , \, i=1,\ldots , n.  \nn
\end{align}
Let $\bSigma_f $ and $\bSigma_\nu = (\sigma_{\nu ,jk})_{1\leq j,k\leq p}$ denote the covariance matrices of $\bff$ and $\bnu$, respectively. Under certain sparsity condition on $\bSigma_\nu$ (see Section~\ref{secT} for an elaboration), $\nu_{1} , \ldots, \nu_{p}$ are weakly dependent random variables with higher signal-to-noise ratios since $\var( \nu_j) = \sigma_{jj} - \| \bSigma_f^{1/2} \bb_j \|^2 < \sigma_{jj}$. Therefore, subtracting common factors out makes the resulting FDP control procedure more efficient and powerful.  It provides an alternative ranking of the significance of hypothesis from the tests based on marginal~statistics.

For each $j=1,\ldots, p$, we have a linear regression model
\bee
	X_{ij} = \mu_j + \bb_j^\T \bff_i + \nu_{ij} , \ \ i=1,\ldots, n.  \label{marginal.model}
\eee
A natural approach is to estimate $\mu_j$ and $\bb_j$ by the method of least squares. However, the least squares method is sensitive to the tails of the error distributions. Also, as noted in \cite{FLW2014}, the LAD-based methods are not applicable in the presence of asymmetric and heteroscedastic errors. Hence, we suggest a robust method that simultaneously estimates $\mu_j$ and $\bb_j$ by solving
\begin{align}
  (\hat{\mu}_j, \hat{\bb}_j^\T)^\T  \in \argmin_{   \mu \in \bbr, \, \bb    \in  \bbr^{d } } \, \sn \ell_\tau( X_{ij} - \mu  -  \bb^\T \bff_i ) ,  \label{robust.mle}
\end{align}
where $\ell_\tau$ is given in \eqref{huber.loss}. By Theorems~\ref{RAthm1} and \ref{RAthm2}, the adaptive Huber estimator $\hat{\mu}_{j }$ follows a normal distribution asymptotically as $n\to \infty$:
\begin{align}  \label{univariate.CLT}
  \sqrt{n}  \big( \hat{\mu}_{j}  - \mu_j  \big)    \xrightarrow {\mathscr{D}} N(0,\sigma_{\nu , jj}) ~\mbox{ with }~  \sigma_{\nu ,jj} = \var( \nu_j )  .
\end{align}

To construct a test statistic for the individual hypothesis $H_{0j}: \mu_j =0$ with pivotal limiting distribution, we need to estimate $\sigma_{\nu, jj}  =   \sigma_{jj}  - \var(\bb_j^\T  \bff ) $. For $ \var(\bb_j^\T  \bff )$, a natural and simple estimator is $\hat{\bb}_j^\T \hat{\bSigma}_f \hat{\bb}_j$, where $\hat{\bSigma}_f : = n^{-1} \sn \bff_i \bff_i^\T $. Let $\hat{\sigma}_{jj}$ and $\hat{\sigma}_{\nu,jj}$ be generic estimators of $\sigma_{jj}$ and $\sigma_{\nu,jj}$, respectively. To simultaneously infer  all the hypotheses of interest, we need the following uniform convergence results
$$
	\max_{1\leq j\leq p} \bigg|  \frac{ \hat{\sigma}_{jj} }{\sigma_{jj}} -1  \bigg| = o_\mathbb{P}(1)   \ \ \mbox{ and } \ \   \max_{1\leq j\leq p} \bigg|  \frac{  \hat{\sigma}_{\nu,jj} }{\sigma_{\nu,jj}} -1  \bigg|  =  o_\mathbb{P}(1).
$$
 For $\sigma_{jj}=\var(X_j)$, it is known that the sample variance $n^{-1}\sn (X_{ij}-\overline{X}_j)^2$ performs poorly when $X_j$ has heavy tails. Based on the recent developments of robust mean estimation for heavy-tailed data [\cite{C2012}, \cite{JL2015}, \cite{FLW2014}], we consider the following two types of robust variance estimators.

\begin{itemize}
\item[1](Adaptive Huber variance estimator). Write $\theta_j = \e(X_j^2)$ so that $\sigma_{jj} =\theta_j   - \mu_j^2$. Construct the adaptive Huber estimator of $\theta_j $ using the squared data, i.e., $\hat{\theta}_{j} = \argmin_{ \theta > 0 } \sn \ell_\gamma( X_{ij}^2 - \theta )$, where $\gamma=\gamma_n$ is a tuning parameter. Then, we compute the adaptive Huber estimator $(\hat{\mu}_j, \hat{\bb}_j^\T)^\T$ given in \eqref{robust.mle}. The variance estimator is then defined by
\bee \label{RA-var}
	 \hat{\sigma}_{\nu,jj}  =
	 \begin{cases}
	\hat{\theta}_{j} -  \hat{\mu}_{j  }^2 - \hat{\bb}_j^\T \hat{\bSigma}_f \hat{\bb}_j   & \, \mbox{ if }  \hat{\theta}_{j}  >  \hat{\mu}_{j  }^2 + \hat{\bb}_j^\T \hat{\bSigma}_f \hat{\bb}_j , \\
	\hat{\theta}_{j}  & \, \mbox{ otherwise.}
	\end{cases}
\eee

\item[2](Median-of-means variance estimator). The median-of-means technique, which dates back to \cite{NY1983}, robustifies the empirical mean by first dividing the given observations into several blocks, computing the sample mean within each block and  then taking the median of these sample means as the final estimator. Although the sample variance cannot be represented in a simple average form, it is a $U$-statistic with a symmetric kernel $h: \bbr^2 \mapsto \bbr$ given by $h(x,y)=  (x-y)^2/2$. The recent work of \cite{JL2015} extends the median-of-means technique to construct $U$-statistics based sub-Gaussian estimators for heavy-tailed data.

Back to the current problem, we aim to estimate $\sigma_{jj}$ based on independent observations $X_{1j}, \ldots, X_{nj}$. Let $V=V_n < n$ be an integer and decompose $n$ as $n=Vm + r$ for some integer $0\leq r<V$. Let $B_1,\ldots, B_V$ be a partition of $\{1,\ldots, n\}$ defined by
\bee
	B_k = \begin{cases}
		\{ (k -1 )m +1, (k -1 )m +2, \ldots, km \} ,  &\ \ \mbox{ if } 1\leq k \leq V-1 , \\
		\{ (V-1)m+1, (V-1)m+2, \ldots, n  \}, & \ \ \mbox{ if }  k  = V.
	\end{cases} \label{blocks.def}
\eee
For each pair $(k,\ell)$ satisfying $1\leq k < \ell \leq V$, define decoupled $U$-statistic $U_{j,k \ell} = (2|B_k||B_\ell |)^{-1} \sum_{i_1 \in B_k} \sum_{i_2 \in B_\ell} (X_{i_1 j} - X_{i_2 j})^2 $. Then, we estimate $\sigma_{jj}$ by the median of $\{ U_{j, k \ell} :  1\leq k< \ell \leq V \}$, i.e., $\wt \sigma_{jj}(V)   \in  \argmin_{u \in \bbr} \sum_{1\leq k< \ell \leq V} |  U_{j,k\ell} -u  |$. As before, we compute the adaptive Huber estimator $(\hat{\mu}_j, \hat{\bb}_j^\T)^\T$. Finally, our robust variance estimators are
\bee
	 \wt{\sigma}_{\nu,jj}  
= \begin{cases}
	\wt \sigma_{jj}(V) - \hat{\bb}_j^\T \hat{\bSigma}_f \hat{\bb}_j   & \, \mbox{ if } \wt \sigma_{jj}(V) >  \hat{\bb}_j^\T \hat{\bSigma}_f \hat{\bb}_j  , \\
	\wt \sigma_{jj}(V) & \, \mbox{ otherwise,}
	\end{cases}	
 . \label{mom-var}
\eee
\end{itemize}

Given robust mean and variance estimators of each type, we construct dependence-adjusted test statistics
\bee
	T_j = \sqrt{n} \,  \hat{\sigma}_{\nu,jj}^{-1/2} \hat{\mu}_j \ \ \mbox{ and } \ \  S_j = \sqrt{n} \,  \wt{\sigma}_{\nu,jj}^{-1/2} \hat{\mu}_j  \ \  \mbox{ for }  j =1 , \ldots , p .  \label{test.stat}
\eee
In fact, as long as the fourth moment $\e ( X_j^4)$ is finite, the estimators $ \hat{\sigma}_{\nu,jj}$ and $ \wt{\sigma}_{\nu,jj}$ given in \eqref{RA-var} and \eqref{mom-var}, respectively, are concentrated around ${\sigma}_{\nu,jj}$ with high probability. In view of \eqref{univariate.CLT}, under the null hypothesis $H_{0j} : \mu_j = 0$, the adjusted test statistics $T_j$ and $S_j$ satisfy that $T_j  \xrightarrow {\mathscr{D}} N(0,1)$ and $S_j \xrightarrow {\mathscr{D}} N(0,1)$ as $n\to \infty$.

\subsection{Dependence-adjusted FDP control procedure}

To conduct multiple testing of \eqref{multiple.test} using the test statistics $T_j$'s, let $z > 0$ be the critical value to be determined such that $H_{0j}$ is rejected whenever $|T_j| \geq z$. The main object of interest in this paper is the false discovery proportion
\bee
	\FDP(z) = \frac{V(z) }{\max\{R(z) , 1 \} } , \ \ z \geq 0 ,  \label{FDP.def}
\eee
where $ V(z) = \sum_{j \in \mathcal{H}_0} 1(|T_j | \geq z)$ is the number of false discoveries, $R(z) =   \sum_{j =1}^p  1(|T_j | \geq z)$ and $\mathcal{H}_0 = \{ j : 1\leq j\leq p, \mu_j = 0 \}$ represents the set of true null hypotheses.  There is substantial interest in controlling the FDP at a prespecified level $0<\alpha<1$ for which the ideal rejection threshold is $z_{{\rm oracle}} = \inf\{ z \geq 0: \FDP(z) \leq \alpha \}$.

The statistical behavior of  $\FDP(z)$ is the center of interest in multiple testing. However, the realization of $V(z)$ for a given experiment is unknown and thus needs to be estimated. When the sample size is large, it is natural to approximate $V(z)$ by its expectation $2 p_0 \Phi(-z)$, where $p_0 = \Card(\mathcal{H}_0)$. In the high dimensional sparse setting, both $p$ and $p_0$ are large and $p_1 = p-p_0 = o(p)$ is relatively small. Therefore, we can use $p$ as a slightly conservative surrogate for $p_0$, so that $\FDP(z)$ can be approximated by
\bee
  \FDP_{{\rm N}}(z) =\frac{ 2p \Phi(-z) }{\max\{R(z) , 1 \} } , \ \ z \geq 0 . \label{AFDP.def}
\eee
We will prove in Theorem~\ref{thm1} that under mild conditions, $\FDP_{{\rm N}}(z)$ provides a consistent estimate of $\FDP(z)$ uniformly in $0\leq z \leq \Phi^{-1}(1-m_p/(2p))$ for any sequence of positive number $m_p\leq 2p$ satisfying $m_p\to \infty$.

In the non-sparse case where $ \pi_0 = p_0 / p$ is bounded away from 0 and 1 as $p \to \infty$, $\FDP_{{\rm N}}$ given in \eqref{AFDP.def} tends to overestimate the true FDP. Therefore, we need to estimate the proportion $\pi_0$,  which has been studied by \cite{ETST2001}, \cite{S2002}, \cite{GW2004}, \cite{LL2005} and \cite{MR2006}, among others. For simplicity, we focus on Storey's approach. Let $\{ P_j = 2\Phi(-|T_j|)\}_{j=1}^p$ be the approximate $P$-values. For a predetermined $\lambda \in [0,1)$, \cite{S2002} suggests the following conservative estimate of $\pi_0$:
\bee
	\hat{\pi}_0(\lambda) = \frac{1}{(1-\lambda) p} \sum_{j=1}^p 1(P_j > \lambda).
\eee
The intuition of such an estimator is as follows. Since most of the large $P$-values correspond to the null and thus are uniformly distributed, for a sufficiently large $\lambda$, we expect about $(1-\lambda) \pi_0$ of the $P$-values to lie in $(\lambda , 1]$. Hence, the proportion of $P$-values that exceed $\lambda$, $p^{-1}\sum_{j=1}^p 1(P_j > \lambda)$, should be close to $(1-\lambda) \pi_0$.  This gives rise to Storey's procedure.

Incorporating such an estimate of $\pi_0$, we obtain a modified estimate of $\FDP(z)$ by
\bee
	\FDP_{{\rm N},\lambda}(z) =\frac{ 2  p \, \hat{\pi}_0(\lambda)  \Phi(-z) }{\max\{R(z) , 1 \}  } , \ \ z \geq 0 . \label{mFDP.def}
\eee
In view of \eqref{AFDP.def}--\eqref{mFDP.def} and the fact $\hat{\pi}_0(0) =1$, we have $\FDP_{{\rm N},0}(z) = \FDP_{{\rm N}}(z)$.

By replacing the unknown quantity $ \FDP(z)$ by $ \FDP_{{\rm N} , \lambda}(z)$ given in \eqref{mFDP.def} for some $\lambda \in [0,1)$, we reject $H_{0j}$ whenever $|T_j| \geq \hat z_{{\rm N},\lambda}$, where
\begin{align}
 \hat z_{{\rm N},\lambda} = \inf \big\{  z \geq 0: \FDP_{{\rm N},\lambda}(z) \leq \alpha \big\} .  \label{zBH.def}
\end{align}
By Lemmas~1 and 2 in \cite{STS2004}, this procedure is equivalent to a variant of the seminal Benjamini-Hochberg (B-H) procedure [\cite{BH1995}] for selecting $\mathcal{S}=\{ j : 1\leq j\leq p,  P_j \leq P_{(k_p(\lambda)) }   \}$ based on the $P$-values $P_j  = 2\Phi(-|T_j|)$, where $k_p(\lambda) := \max\{ j : 1\leq j\leq p,  P_{(j)} \leq \frac{\alpha j }{ \hat{\pi}_0(\lambda) p } \}$ and $P_{(1)} \leq \cdots \leq P_{(p)}$ are the ordered $P$-values. Theorem~\ref{thm2} shows that under weak moment conditions, the FDP of this dependence-adjusted procedure with $\lambda=0$ converges to  $\alpha$ in the ultra-high dimensional sparse setting.

Note that $\FDP_{{\rm N} ,0}(z)$ is the most conservatively biased estimate of $\FDP(z)$ among all $\lambda\in [0,1)$ using normal calibration. The statistical power of the corresponding procedure can be compromised if $\pi_0$ is much smaller than 1. In general, the procedure requires the choice of a tuning parameter $\lambda$ in the estimate $\hat{\pi}(\lambda)$, which leads to an inherent bias-variance trade-off. We refer to Section~9 in \cite{S2002} and Section~6 in \cite{STS2004} for two data-driven methods for automatically choosing $\lambda$.

\subsection{Bootstrap calibration}

When the sample size is large, it is suitable to use the normal distribution for calibration. Here we consider bootstrap calibration, which has been widely used due to its good numerical performance when the sample size is relatively small. In particular, we focus on the weighted bootstrap [\cite{BB1995}], which perturbs the objective function of an  $M$-estimator with i.i.d. weights. Let $W$ be a random variable with unit mean and variance, i.e., $\e (W)=1$ and $\var(W)=1$. Independent of $\bX_1,\ldots, \bX_n$, generate i.i.d. random weights $\{ W_{ij,b}, 1\leq i\leq n, 1\leq j\leq p, 1\leq b\leq B \}$ from $W$, where $B$ is the number of bootstrap replications. For each $j$, the bootstrap counterparts of $(\hat{\mu}_j , \hat{\bb}_j^\T)^\T$ given in \eqref{robust.mle} are defined by
\bee
	 (\hat{\mu}^*_{j,b}, (\hat{\bb}_{j,b}^{*})^\T)^\T  \in \argmin_{   \mu \in \bbr, \, \bb  \in  \bbr^{d} } \, \sn W_{ij,b} \,\ell_\tau( X_{ij} - \mu  -  \bb^\T \bff_i ) , \ \ b=1,\ldots, B. \nn
\eee
For $j=1,\ldots,  p$, define empirical tail distributions
$$
	G^*_{j,B}(z) =  \frac{1}{ B+1}  \sum_{b=1}^B 1\big( |\hat{\mu}^*_{j,b} - \hat{\mu}_j |  \geq z \big) , \ \ z \geq 0.
$$
The bootstrap $P$-values are thus given by $\{ P^{*}_{j} =  G^*_{j,B}(|\hat{\mu}_j|) \}_{j=1}^p$, to which either the B-H procedure or Storey's procedure can be applied. For the former, we reject $H_{0j}$ whenever $ P^{*}_{j}  \leq P^*_{(k_p^*)}$, where $k_p^* = \max\{ j : 1\leq j\leq p,  P^*_{(j)} \leq j\alpha/p\}$ for a predetermined $0<\alpha<1$ and $P^*_{(1)} \leq \cdots \leq P^*_{(p)}$ are the ordered bootstrap $P$-values. For the distribution of the bootstrap weights, it is common to choose  $W \sim 2{\rm Bernoulli}(0.5)$, $W\sim {\rm exp}(1)$ or $W \sim N(1,1)$ in practice. Using nonnegative random weights has the advantage that the weighted objective function is guaranteed to be convex.

\begin{remark}
{\rm
Weighted bootstrap procedure serves as an alternative method to normal calibration in multiple testing. We refer to \cite{SZ2015} and \cite{Z2016} for the most advanced recent results of weighted bootstrap and a comprehensive literature review.  We leave the theoretical guarantee of this procedure for future research.
}
\end{remark}

\subsection{Theoretical properties}
\label{secT}

First, we impose some conditions on the distribution of $\bX$ and the tuning parameters~$\tau$~and~$\gamma$ that are used in the robust regression and robust estimation of the second moment.

\begin{itemize}
\item[\textbf{(C1).}] (i) $\bX \in \bbr^p$ follows the model \eqref{factor.model} with $\bff$ and $\bu$ being independent; (ii) $\e  ( u_j )  = 0$, $\e ( u_j^2 ) =1$ for $j=1,\ldots,p$, and $c_v \leq \min_{1\leq j\leq p} \sigma_{\nu,jj} \leq  \max_{1\leq j\leq p}  \e (\nu_j^4)  \leq C_v$ for some $C_\nu > c_\nu  >0$; (iii) $\e ( \bff)  = \mo$, $\bSigma_f=\cov(\bff)$ is positive definite and $  \|   \bSigma_f^{-1/2} \bff \|_{\psi_2} \leq C_f$ for some $C_f>0$.

\item[\textbf{(C2).}] $(\tau, \gamma) = (\tau_n, \gamma_n)$ satisfies $\tau = \tau_0 \sqrt{n} w_n^{-1/2}$ and $\gamma=\gamma_0 \sqrt{n} w_n^{-1/2}$ for some constants $\tau_0 \geq \max_{1\leq j\leq p} \sigma_{\nu, jj}^{1/2}$ and $\gamma_0 \geq \max_{1\leq j\leq p} {\rm var}^{1/2}(X_j^2)$, where the sequence $ w_n  $ is such that $w_n \to \infty$ and $w_n = o(\sqrt{n})$.

\end{itemize}

In addition, we need the following assumptions on the covariance structure of $\bnu = \bSigma(\bff) \bu$, and the number and magnitudes of the signals (nonzero coordinates of $\bmu$). Let $\bR_\nu = ( \rho_{\nu,jk})_{1\leq j,k\leq p}$ be the correlation matrix of $\bnu$, where by the independence of $\bff$ and $\bu$, $ \rho_{\nu,jk} =  \frac{  \e \{\sigma_j(\bff)\sigma_k(\bff)\} }{ \sqrt{ \e \sigma_j^2(\bff) \e \sigma_k^2(\bff) } }  \times \corr(u_j, u_k)$.
\begin{itemize}

\item[\textbf{(C3).}] $\max_{1\leq j<k\leq p} | \rho_{\nu,jk}| \leq \rho$ and
$$
	s_p := \max_{1\leq j\leq p} \sum_{k=1}^p 1\big\{ | \rho_{\nu,jk} | > (\log p)^{-2-\kappa} \big\} = O(p^r)
$$
for some $0<\rho<1$, $\kappa>0$ and $0< r < (1-\rho)/(1+\rho)$. As $n,p \to \infty$, $p_0/p \to \pi_0 \in (0,1]$, $\log p= o(n^{1/5})$ and $w_n \asymp n^{1/5}$, where $w_n$ is as in Condition~(C2).

\item[\textbf{(C4).}] ${\rm Card}  \big\{  j: 1\leq j\leq p , \sigma_{\nu,jj}^{-1/2} |\mu_j|    \geq  \lambda \sqrt{(\log p) / n}  \big\} \to \infty$ as $n, p\to \infty$ for some $\lambda>2\sqrt{2}$.

\end{itemize}

Condition (C3) allows weak dependence among $\nu_1,\ldots,\nu_p$ in the sense that each variable is moderately correlated with $s_p$ other variables and weakly correlated with the remaining ones. The technical assumption (C4) imposes a constraint on the number of significant true alternatives, which is slightly stronger than $p_1 \to \infty$. According to Proposition~2.1 in \cite{LS2014}, this condition is nearly optimal for the results on FDP control in the sense that if $p_1$ is fixed, the B-H method fails to control the FDP at any level $0<\beta<1$ with overwhelming probability even if the true $P$-values were known.

For robust test statistics $T_j$'s  given in \eqref{test.stat}, define the null distribution $F_{j,n}(x)=\mathbb{P}(T_j \leq x | H_{0j})$ and the corresponding $P$-value $P_{j}^{{\rm true}}=F_{j,n}(-|T_j|)+1-F_{j,n}(|T_j|)$. In practice, we use $P_j=2\Phi(-|T_j|)$ to estimate the true (unknown) $P$-values. A natural question is on how fast $p$ can  diverge with $n$ so as to maintain valid simultaneous inference. This problem has been studied in \cite{FHY2007}, \cite{KM2007} and \cite{LS2010}. There it is shown that the simple consistency $\max_{1\leq j\leq p} |P_j - P_{j}^{{\rm true}} | = o(1)$ is not enough, and the level of accuracy required must increase with $n$. More precisely, to secure a valid inference, we require
\bee
	 \max_{1\leq j\leq p}\bigg| \frac{ P_j }{ P_{j}^{{\rm true}} } -  1  \bigg| 1\{ \mathcal{S}_j \}   = o(1) \ \ \mbox{ as } n\to \infty,  \label{unif.consistency}
\eee
where $\mathcal{S}_j = \{  P_{j}^{{\rm true}} > \alpha/p \}$, $j=1,\ldots,p$.

\begin{theorem} \label{thm0}
Assume that Conditions~(C1) and (C2) hold and $\log p = o \{ \min(   w_n , n w_n^{-2}) \}$. Then \eqref{unif.consistency} holds.
\end{theorem}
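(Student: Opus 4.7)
The plan is to reduce Theorem~\ref{thm0} to a uniform-in-$j$ moderate deviation bound for the null survival functions $\phi_j(z) := F_{j,n}(-z) + 1 - F_{j,n}(z)$ in a range containing $z_p := \Phi^{-1}(1 - \alpha/(4p)) \sim \sqrt{2\log p}$. Concretely, I aim to establish
\begin{equation*}
\Delta_n := \max_{1 \le j \le p}\, \sup_{0 \le z \le z_p}\, \bigg| \frac{2\{1 - \Phi(z)\}}{\phi_j(z)} - 1 \bigg| \;\to\; 0.
\end{equation*}
Once $\Delta_n = o(1)$ is in hand, the theorem follows in two short steps. First, on the event $\mathcal{S}_j = \{P_j^{{\rm true}} > \alpha/p\}$, the observed $|T_j|$ must lie in $[0, z_p]$ for all sufficiently large $n$: evaluating the bound at $z = z_p$ yields $\phi_j(z_p) = (1 + o(1)) \cdot 2\{1 - \Phi(z_p)\} = (1 + o(1))\alpha/(2p) < \alpha/p$, and since $\phi_j$ is non-increasing, $|T_j| > z_p$ would force $P_j^{{\rm true}} = \phi_j(|T_j|) < \alpha/p$, contradicting $\mathcal{S}_j$. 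Second, on $\mathcal{S}_j$ one has $|P_j/P_j^{{\rm true}} - 1| = |2\{1 - \Phi(|T_j|)\}/\phi_j(|T_j|) - 1| \le \Delta_n$, so taking the maximum over $j$ yields the claim.

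To obtain the uniform moderate deviation, I would apply Theorem~\ref{RAthm3} to each marginal regression model~\eqref{marginal.model} under $H_{0j}$. The covariate is the $K$-dimensional factor $\bff$, which is sub-Gaussian with $\|\bSigma_f^{-1/2}\bff\|_{\psi_2} \le C_f$ by (C1)(iii); the noise $\nu_j$ satisfies $c_\nu \le \sigma_{\nu,jj}$ and $\mathbb{E}(\nu_j^4) \le C_\nu$ by (C1)(ii); hence the problem-dependent constants in Theorem~\ref{RAthm3} are uniformly bounded over $j$. The tuning choice $\tau = \tau_0 \sqrt{n} w_n^{-1/2}$ in (C2) matches the form required by Theorem~\ref{RAthm3} since $d + w_n \asymp w_n$ with $d = K$ fixed, and the hypothesis $\log p = o\{\min(w_n, n w_n^{-2})\}$ gives $z_p = o\{\min(\sqrt{w_n}, \sqrt{n}/w_n)\}$, placing the entire interval $[0, z_p]$ inside the valid moderate deviation range and delivering $\Delta_n = o(1)$.

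The main technical obstacle is that Theorem~\ref{RAthm3} is stated for the non-studentized ratio $\sqrt{n}(\hat\mu_j - \mu_j)/\sigma_{\nu,jj}^{1/2}$, whereas $T_j$ uses the estimated $\hat\sigma_{\nu,jj}^{1/2}$. I would bridge this by proving a uniform concentration bound $\max_{1\le j\le p} |\hat\sigma_{\nu,jj}/\sigma_{\nu,jj} - 1| = o_{\mathbb{P}}((\log p)^{-1/2})$, obtained by applying Theorem~\ref{RAthm1} to the adaptive Huber estimator $\hat\theta_j$ of $\mathbb{E}(X_j^2)$ (whose squared data have finite variance by (C1)(ii)) and to the Huber slope $\hat\bb_j$, then invoking a union bound that is absorbed by the condition $\log p = o(w_n)$. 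Because such a multiplicative perturbation shifts the normal argument by $o_{\mathbb{P}}(1)$ in the range $z \le z_p \asymp \sqrt{\log p}$ and $1 - \Phi$ is locally smooth at that scale, the moderate deviation transfers from the non-studentized statistic to $T_j$, completing the proof.
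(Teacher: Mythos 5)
Your overall architecture is exactly the paper's: the paper first establishes (via Proposition~\ref{prop1}, i.e.\ Theorem~\ref{RAthm3} applied to each marginal model~\eqref{marginal.model}, combined with Proposition~\ref{prop2} on the variance estimates) a uniform relative approximation of the null distributions,
$\max_{1\le j\le p}\max_{0\le z\le o(\sqrt{w_n}\wedge\sqrt{n}w_n^{-1})}\max\{ |\frac{1-F_{j,n}(z)}{1-\Phi(z)}-1|, |\frac{F_{j,n}(-z)}{1-\Phi(z)}-1| \}=o(1)$,
and then runs precisely your two closing steps: it takes $z_n$ with $1-\Phi(z_n)=\alpha/(3p)$ (you use $\alpha/(4p)$; immaterial), shows $\mathcal{S}_j$ forces $|T_j|\le z_n$ by monotonicity of the null survival function, and reads off \eqref{unif.consistency}. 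Your verification that $z_p\sim\sqrt{2\log p}$ lies inside the moderate deviation range under $\log p = o\{\min(w_n, nw_n^{-2})\}$ is also correct.

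The gap is in your studentization bridge, and as stated that step fails. You target $\max_j|\hat{\sigma}_{\nu,jj}/\sigma_{\nu,jj}-1|=o_{\mathbb{P}}((\log p)^{-1/2})$ and argue that the induced $o(1)$ shift of the normal argument, together with local smoothness of $1-\Phi$, transfers the moderate deviation to $T_j$. That reasoning controls the \emph{absolute} error of tail probabilities, but \eqref{unif.consistency} requires \emph{relative} error, and at $z\asymp\sqrt{2\log p}$ the null tails are of size $\asymp 1/p$, so absolute $o(1)$ accuracy says nothing. Quantitatively, writing $\hat{\sigma}_{\nu,jj}=\sigma_{\nu,jj}(1+\delta_j)$, the event $\{|T_j|\ge z\}$ is $\{|Z_j|\ge z(1+\delta_j)^{1/2}\}$ for the non-studentized statistic $Z_j$, an argument shift of $\approx z\delta_j/2$; by the Mills ratio, $\{1-\Phi(z+z\delta_j/2)\}/\{1-\Phi(z)\}$ is $\exp\{-z^2\delta_j/2\}$ up to factors tending to one, so relative consistency demands $z_p^2\max_j|\delta_j|\to 0$, i.e.\ $\max_j|\delta_j|=o((\log p)^{-1})$ --- strictly stronger than your $o((\log p)^{-1/2})$ (e.g.\ $\delta_j\asymp(\log p)^{-3/4}$ satisfies your bound while the tail ratio diverges). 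The repair is to keep the explicit rate rather than the loose $o_{\mathbb{P}}$ target: Theorem~\ref{RAthm1} applied to $\hat{\theta}_j$ and $\hat{\bb}_j$, plus sub-Gaussian concentration of $\hat{\bSigma}_f$, gives $\max_j|\delta_j|\le Cn^{-1/2}\sqrt{w_n}$ with probability at least $1-Cpe^{-w_n}$ (this is the paper's Proposition~\ref{prop2}), and then $z_p^2\, n^{-1/2}\sqrt{w_n}=\{(\log p)^2w_n/n\}^{1/2}=o(1)$ follows exactly from the two halves of the hypothesis, $\log p=o(w_n)$ and $\log p=o(nw_n^{-2})$. One further point to make explicit: $\delta_j$ is random, so the transfer must condition on the good event, and the exceptional probability $Ce^{-w_n}$ must itself be negligible relative to the $1/p$-sized tails; this again uses $\log p=o(w_n)$.
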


Theorem~\ref{thm0} shows that, to ensure the accuracy of the normal distribution calibration,   the number of simultaneous tests can be as large as $\exp\{o(n^{1/3})\}$, when taking $w_n \asymp n^{1/3}$. We are also interested in estimating FDP in the high dimensional sparse setting, that is, $p$ is large, but the number of $\mu_j \neq 0$ is relatively small. The following result indicates that $\FDP_{{\rm N}}(z)$ given in \eqref{AFDP.def} provides a consistent estimator of the realized FDP in a uniform sense.

\begin{theorem} \label{thm1}
Assume that Conditions (C1)--(C3) hold. Then, for any sequence of positive numbers $m_p \leq p$ satisfying $m_p \to \infty$, we have as $(n,p) \to \infty$,
\bee
	\max_{0\leq z\leq \Phi^{-1}(1-m_p/(2p) )} \bigg| \frac{\FDP(z)}{\FDP_{\rm N}(z)} - 1 \bigg| \to 0 ~\mbox{ in probability.} \label{FDP.AFDP}
\eee
\end{theorem}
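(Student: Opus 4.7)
The plan begins by simplifying $\FDP(z)/\FDP_{\rm N}(z) = V(z)/\bigl(2p\Phi(-z)\bigr)$ whenever the denominator is positive. Since the target limit is $1$, I would work in the sparse regime $p_0/p \to 1$ implicit in the statement (the general case under Condition (C3) would only yield a limit of $\pi_0$), and it suffices to show
\begin{equation*}
\max_{0\leq z\leq z_p^\ast}\bigg|\frac{V(z)}{2p_0\Phi(-z)}-1\bigg|\toP 0, \qquad z_p^\ast := \Phi^{-1}\bigl(1 - m_p/(2p)\bigr).
\end{equation*}
I would establish this via a first- and second-moment analysis of $V(z) = \sum_{j\in\mathcal{H}_0}1(|T_j|\geq z)$, combined with a discretization in $z$ and the monotonicity of $V(\cdot)$.

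For the mean, Theorem~\ref{thm0} delivers the uniform relative approximation $\max_{j\in\mathcal{H}_0}|\mathbb{P}(|T_j|\geq z)/(2\Phi(-z)) - 1| = o(1)$ on $[0,z_p^\ast]$, whence $\mathbb{E} V(z) = (1+o(1))\cdot 2p_0\Phi(-z)$ uniformly. For the variance I would write $\Var V(z)$ as a diagonal piece, bounded by $\mathbb{E}V(z)\leq 2p_0\Phi(-z)$ and hence contributing $O(1/m_p)$ to the relative variance after normalization, plus an off-diagonal covariance sum. Condition (C3) partitions the latter into ``strong'' pairs with $|\rho_{\nu,jk}|>(\log p)^{-2-\kappa}$, of which there are at most $p_0 s_p \lesssim p^{1+r}$, and ``weak'' pairs.

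The main technical obstacle is to bound $\Cov\bigl(1(|T_j|\geq z),1(|T_k|\geq z)\bigr)$ uniformly in $(j,k,z)$ by the corresponding bivariate Gaussian covariance at correlation $\rho_{\nu,jk}$. I would do this by applying Theorem~\ref{RAthm1} coordinatewise to linearize $(T_j, T_k)$ as the standardized bivariate sum of Huber-scores $\bigl(\ell'_\tau(\nu_{ij})/\sigma_{\nu,jj}^{1/2}, \ell'_\tau(\nu_{ik})/\sigma_{\nu,kk}^{1/2}\bigr)$, with joint remainder of order $(\log p)/\sqrt{n}$, which is negligible at moderate-deviation scale $z\lesssim\sqrt{\log p}$. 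A bivariate Cram\'er-type moderate deviation, provable in the spirit of Theorem~\ref{RAthm3} via Zaitsev--Sakhanenko coupling of the $\tau$-truncated scores to a bivariate Gaussian, then transfers the tail probability to $\mathbb{P}(|G_j|\geq z,|G_k|\geq z)$ at correlation $\rho_{\nu,jk}+o(1)$. A Slepian-type integration along $\rho$ yields the sharp bound of order $|\rho_{\nu,jk}|\phi(z)^2$ for weak pairs, while for strong pairs the crude $\mathbb{P}(|G_j|\geq z,|G_k|\geq z)\lesssim\Phi(-z)^{2/(1+\rho)}$ suffices, giving altogether
\begin{equation*}
\Var V(z)\lesssim p_0\cdot 2\Phi(-z) \;+\; p^{1+r}\Phi(-z)^{2/(1+\rho)} \;+\; p_0^{\,2}(\log p)^{-2-\kappa}\phi(z)^2.
\end{equation*}

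Dividing by $(2p_0\Phi(-z))^2$, the three pieces reduce to $O(1/m_p)$, $O(p^{\,r-(1-\rho)/(1+\rho)})$ (which vanishes precisely by the gap $r<(1-\rho)/(1+\rho)$ in (C3)), and $O((\log p)^{-\kappa})$ via Mills' ratio $\phi(z)/\Phi(-z)\lesssim z\vee 1\lesssim\sqrt{\log p}$. Chebyshev's inequality then yields pointwise consistency; uniformity follows by selecting a polynomial-in-$p$ grid $0=z_0<\cdots<z_N=z_p^\ast$ along which $\Phi(-z_\ell)/\Phi(-z_{\ell+1})=1+o(1)$, then using the monotonicity of $V(\cdot)$ to sandwich the supremum between adjacent grid values and applying a union bound. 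The bivariate moderate-deviation coupling is the linchpin; once it is in place, the remaining steps follow a well-worn template analogous to \cite{LS2014}.
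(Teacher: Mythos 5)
Your architecture coincides with the paper's: since $\FDP$ and $\FDP_{\rm N}$ share the denominator $\max\{R(z),1\}$, the ratio equals $V(z)/\{2p\Phi(-z)\}$, and the sparsity $p_1=o(p)$ (which you correctly identified as necessary for the limit to be $1$ rather than $\pi_0$) reduces the theorem to the uniform law of large numbers for $\sum_{j\in\mathcal{H}_0}1(|T_j|\geq z)/\{2p_0\Phi(-z)\}$ — this is exactly Proposition~\ref{prop3}. Your mean step (univariate Cram\'er-type moderate deviation), your splitting of pairs into weak and strong via Condition (C3), and your discretization-plus-monotonicity uniformization are all the paper's steps; your weak-pair bound (joint Bahadur linearization, a relative-error bivariate moderate deviation, then a Gaussian covariance estimate) is the content of \eqref{joint.md} in Lemma~\ref{lemB1}, with your Slepian integration replacing the paper's direct density-ratio comparison — an immaterial difference.

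The step that fails as described is the strong pairs. You propose to transfer $\mathbb{P}(|T_j|\geq z,|T_k|\geq z)$ to a bivariate Gaussian orthant probability even when $|\rho_{\nu,jk}|$ is of constant order, and only then invoke $\Phi(-z)^{2/(1+\rho)}$. But at the top of the range, $z^2\asymp\log(p/m_p)$, the bound you must produce is of size $\exp\{-(1-\delta)z^2/(1+\rho)\}$, i.e., exponentially small in $\log p$, which under (C3) may be as large as $o(n^{1/5})$; so any transfer must carry an additive error that is exponentially small in $\log p$. None of the tools you name delivers this uniformly: Bentkus-type Berry--Esseen errors [\cite{B2003}] are polynomial in $n$, and a Zaitsev-type coupling of sums whose summands are bounded by $\tau/\sqrt{n}\asymp w_n^{-1/2}$ incurs error $\exp(-c\eta w_n^{1/2})$ for a spatial shift $\eta$, which with $\eta=O(1)$ is too large once $\log p\gg n^{1/10}$, while $\eta\asymp w_n^{1/2}$ exceeds $z$ itself; and the relative-error bivariate moderate deviation lemma actually used (Lemma~3.1 in the supplement of \cite{LS2014}) is applied with covariance within $o(1)$ of the identity, i.e., only to weak pairs. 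The paper avoids the Gaussian detour entirely for strong pairs: inequality \eqref{joint.tail} is proved by applying Bernstein's inequality directly to the linearized sums $T_{0j}\pm T_{0k}$, whose summands are bounded by $C\tau$ with variance at most $2(1+|\rho_{\nu,jk}|)+o(1)$, yielding $\exp\{-(1-o(1))z^2/(1+\rho)\}$ for all $z=o(\sqrt{w_n})$ with no approximation error to control. A second, smaller imprecision is your ``polynomial-in-$p$ grid'': the per-point Chebyshev bound is only of order $m_p^{-1}+(\log p)^{-1-\kappa}+p^{r-(1-\rho)/(1+\rho)}$ (your own stated rate $(\log p)^{-\kappa}$ is even weaker), so after the union bound the grid can contain only about $\log p$ points; the paper's grid is geometric on the tail-probability scale, $t_\ell=m_p/p+e^{\ell}m_p^{1-\delta}/p$ with $d_p\asymp\log p$ points, chosen so that the diagonal terms $\sum_{\ell}\{p_0\Psi(z_\ell)\}^{-1}$ also sum to $o(1)$. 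Both gaps are repairable, but as written the strong-pair step does not go through on the full range of $p$ permitted by Condition (C3).
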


Further, Theorem~\ref{thm2} shows that the proposed robust dependence-adjusted inference procedure controls the FDP at a given level $\alpha$ asymptotically with $P$-values estimated from the standard normal distribution.

\begin{theorem}  \label{thm2}
Assume that Conditions (C1)--(C4) hold. Then, for any prespecified $0<\alpha <1$,
\bee
  (p_0 / p)^{-1}  \FDP( \hat z_{{\rm N},0})  \to  \alpha ~\mbox{ in probability}  \label{oracle.FDP.converge}
\eee
as $(n,p) \to \infty$, where $\hat z_{{\rm N},0}$ is defined in \eqref{zBH.def}.
\end{theorem}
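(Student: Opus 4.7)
The plan is to combine the uniform ratio approximation from Theorem~\ref{thm1} with the definition of $\hat z_{{\rm N},0}$ in \eqref{zBH.def}. The main task reduces to showing that $\hat z_{{\rm N},0}$, although data-dependent, falls inside the range $[0, \Phi^{-1}(1 - m_p/(2p))]$ of Theorem~\ref{thm1} for some deterministic $m_p \to \infty$ with $m_p \leq p$, and that $\FDP_{{\rm N}}(\hat z_{{\rm N},0}) \to \alpha$ in probability. Granted these two facts, evaluating the uniform ratio at $z = \hat z_{{\rm N},0}$ and multiplying by $(p_0/p)^{-1}$ yields \eqref{oracle.FDP.converge}.

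First I would locate $\hat z_{{\rm N},0}$ via condition~(C4). Set $S = \{ j : \sigma_{\nu,jj}^{-1/2} |\mu_j| \geq \lambda\sqrt{(\log p)/n} \}$; by~(C4), $|S| \to \infty$ with $\lambda > 2\sqrt{2}$. Theorem~\ref{RAthm1} applied to each marginal regression \eqref{marginal.model}, together with uniform concentration of the robust variance estimators $\hat\sigma_{\nu,jj}$ (which itself follows from Theorem~\ref{RAthm1} applied to the squared data together with a union bound under the scaling $\log p = o(n^{1/5})$ of~(C3)), gives $T_j = \sqrt{n}\,\sigma_{\nu,jj}^{-1/2}\mu_j + O_{\mathbb P}(1)$ simultaneously for $j \in S$. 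A union bound over $j \in S$ then yields $|T_j| \geq c\sqrt{\log p}$ for every $j \in S$ with probability tending to one, for any constant $c$ with $\sqrt{2} < c < \lambda$. Hence $R(c\sqrt{\log p}) \geq |S|$ and
\[
  \FDP_{{\rm N}}(c\sqrt{\log p}) \;\leq\; \frac{2p\,\Phi(-c\sqrt{\log p})}{|S|} \;\leq\; \frac{2\,p^{\,1-c^2/2}}{|S|} \;\to\; 0,
\]
so $\hat z_{{\rm N},0} \leq c\sqrt{\log p}$ and $R(\hat z_{{\rm N},0}) \geq |S| \to \infty$ with high probability.

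Next I would verify $\FDP_{{\rm N}}(\hat z_{{\rm N},0}) \to \alpha$. Because $\FDP_{{\rm N}}(z) = 2p\Phi(-z)/\max\{R(z),1\}$ is piecewise continuous and decreasing in $z$ between its upward jumps at the points $\{|T_j|\}$, the infimum in \eqref{zBH.def} is attained on a decreasing segment, so the equation $2p\Phi(-\hat z_{{\rm N},0}) = \alpha\, R(\hat z_{{\rm N},0})$ holds up to a relative error of order $1/R(\hat z_{{\rm N},0}) = o_{\mathbb P}(1)$. Combined with $R(\hat z_{{\rm N},0}) \geq |S|$, this gives $2p\Phi(-\hat z_{{\rm N},0}) \asymp \alpha |S| \to \infty$, so any $m_p \to \infty$ staying below a fixed fraction of the deterministic lower bound on $|S|$ afforded by~(C4) satisfies $\hat z_{{\rm N},0} \leq \Phi^{-1}(1 - m_p/(2p))$ with high probability. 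Theorem~\ref{thm1} evaluated at $z = \hat z_{{\rm N},0}$ then delivers $\FDP(\hat z_{{\rm N},0})/\FDP_{{\rm N}}(\hat z_{{\rm N},0}) \to 1$ in probability, and \eqref{oracle.FDP.converge} follows.

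The hard part will be the simultaneous normal approximation underlying the first step. The bound $|T_j| \geq c\sqrt{\log p}$ for every $j \in S$ — and, on the null side, a sharp description of $V(z)$ and $R(z)$ for $z$ up to $c\sqrt{\log p}$ — both require Gaussian tail accuracy for the robust studentized statistics uniformly across all $p$ hypotheses. This is precisely the content of the Cram\'er-type moderate deviation in Theorem~\ref{RAthm3} (invoked uniformly via Theorem~\ref{thm0}), under the ultra-high-dimensional scaling $\log p = o(n^{1/5})$ in~(C3). The weak-correlation hypothesis on $\bR_\nu$ in~(C3) then enters only through a second-moment bound on the counting processes $V(z)$ and $R(z)$ — the same ingredient that powers Theorem~\ref{thm1}. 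The margin $\lambda > 2\sqrt{2}$ in~(C4) is chosen generously to leave room for a constant $c > \sqrt{2}$ separating the detectable alternatives from the BH-type threshold of order $\sqrt{2\log p}$.
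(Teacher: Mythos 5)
Your proposal has the same skeleton as the paper's proof: the threshold identity $2p\Phi(-\hat z_{{\rm N},0}) = \alpha\max\{R(\hat z_{{\rm N},0}),1\}$, the use of Condition~(C4) together with uniform concentration of $(\hat\mu_j,\hat\sigma_{\nu,jj})$ to force $R(\hat z_{{\rm N},0})\to\infty$ and hence $\hat z_{{\rm N},0}\le \Phi^{-1}(1-m_p/(2p))$ for a deterministic $m_p\to\infty$, and evaluation of a uniform limit theorem at the random threshold. Your first two steps are sound (your ``up to relative error $1/R$'' version of the threshold identity is harmless; equality in fact holds exactly, as the paper notes). The gap is in the final step, where you route through Theorem~\ref{thm1} and then ``multiply by $(p_0/p)^{-1}$.''

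Theorem~\ref{thm1} compares $\FDP(z)$ with the $p$-normalized estimate $\FDP_{{\rm N}}(z)=2p\Phi(-z)/\max\{R(z),1\}$, so that $\FDP(z)/\FDP_{{\rm N}}(z) = V(z)/\{2p\Phi(-z)\} = (p_0/p)\cdot V(z)/\{2p_0\Phi(-z)\}$. Proposition~\ref{prop3} makes the second factor tend to one uniformly, so this ratio actually converges to $\pi_0$, not to $1$, whenever $\pi_0<1$ --- a case explicitly permitted by Condition~(C3), and the very reason Theorem~\ref{thm2} carries the factor $(p_0/p)^{-1}$. (This is arguably a looseness in the statement of Theorem~\ref{thm1} itself, whose proof requires $p_1=o(p)$ to kill the term $(p-p_0)/p$; the surrounding discussion makes clear it targets the sparse setting.) Consequently you cannot invoke Theorem~\ref{thm1} as a black box here; and even granting it, your chain yields $\FDP(\hat z_{{\rm N},0})\to\alpha$ and therefore $(p_0/p)^{-1}\FDP(\hat z_{{\rm N},0})\to\alpha/\pi_0\ne\alpha$ when $\pi_0<1$. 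The paper avoids this normalization mismatch by combining the threshold identity directly with Proposition~\ref{prop3} (the null-count law of large numbers, normalized by $p_0$):
\[
\frac{\FDP(\hat z_{{\rm N},0})}{(p_0/p)\,\alpha}
=\frac{p_0^{-1}\sum_{j\in\mathcal{H}_0}1\big(|T_j|\ge \hat z_{{\rm N},0}\big)}{\alpha p^{-1}\max\{R(\hat z_{{\rm N},0}),1\}}
=\frac{\sum_{j\in\mathcal{H}_0}1\big(|T_j|\ge \hat z_{{\rm N},0}\big)}{2p_0\Phi(-\hat z_{{\rm N},0})}\;\to\;1,
\]
so the $p$-versus-$p_0$ normalization cancels identically and no sparsity is needed. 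Your argument is correct only in the regime $\pi_0=1$; to prove the theorem as stated, replace the appeal to Theorem~\ref{thm1} by a direct appeal to Proposition~\ref{prop3} with $m_p\asymp\alpha\,\Card(S)$ (legitimate since your set $S$ is deterministic).

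A minor additional slip, not fatal: uniform control over $j\in S$ does not give $T_j=\sqrt{n}\,\sigma_{\nu,jj}^{-1/2}\mu_j+O_{\mathbb{P}}(1)$; the union bound costs a factor of order $\sqrt{\log p}$, so the admissible constants are $\sqrt{2}<c<\lambda-\sqrt{2}$ --- nonempty precisely because $\lambda>2\sqrt{2}$ --- rather than $\sqrt{2}<c<\lambda$. Only the existence of such a $c$ is needed, so your subsequent bounds survive.
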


The constraint on $p$, as a function of $n$, imposed in Theorems~\ref{thm1} and \ref{thm2} can be relaxed in a strict factor model with independent idiosyncratic errors.

\begin{itemize}
\item[\textbf{(C5).}] $\nu_1, \ldots, \nu_p$ in model \eqref{factor.model} are independent. As $n,p \to \infty$, $p_0/p \to \pi_0 \in ( 0, 1]$, $\log p= o(w_n)$ and $w_n = O(n^{1/3})$, where $w_n$ is as in Condition~(C2).
\end{itemize}

\begin{theorem} \label{thm3}
Assume that Conditions (C1), (C2), (C4) and (C5) hold. Then, for any prespecified $0<\alpha <1$, $(p_0 / p)^{-1}  \FDP( \hat z_{{\rm N},0})  \to  \alpha$ in probability as $(n,p) \to \infty$, where $\hat z_{{\rm N},0}$ is defined in \eqref{zBH.def}.
\end{theorem}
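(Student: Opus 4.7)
The plan is to follow the blueprint of Theorem~\ref{thm2} but exploit Condition~(C5) to sharpen the uniform concentration of $V(z)$, thereby relaxing the constraint on $\log p$ from $o(n^{1/5})$ to $o(w_n)$. The architecture has two components: (a) a uniform ratio approximation
\begin{equation*}
\sup_{0 \le z \le z_p} \left| \frac{V(z)}{2 p_0 \Phi(-z)} - 1 \right| \to 0 \ \ \text{in probability},
\end{equation*}
for $z_p = \Phi^{-1}(1 - m_p/(2p))$ with a slowly diverging $m_p$, and (b) transferring this approximation to the data-driven threshold $\hat z_{{\rm N},0}$ in order to conclude $(p_0/p)^{-1} \FDP(\hat z_{{\rm N},0}) \to \alpha$.

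For (a), the key observation is that under Condition~(C5) the idiosyncratic errors $\nu_1,\ldots,\nu_p$ are mutually independent. Consequently, conditional on the common factor sequence $\{\bff_i\}_{i=1}^n$, the estimators $\hat\mu_j$ and $\hat\sigma_{\nu,jj}$, and hence the indicators $\{1\{|T_j|\ge z\}\}_{j\in \mathcal{H}_0}$, are independent across $j$. Combining this with a conditional version of the Cram\'er-type moderate deviation in Theorem~\ref{RAthm3} gives $\mathbb{P}(|T_j| \ge z \mid \{\bff_i\}) = 2\Phi(-z)\{1 + o_{\mathbb{P}}(1)\}$ uniformly in $j \in \mathcal{H}_0$ and $z \in [0, z_p]$. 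A conditional Bernstein inequality applied to the centered sum $V(z) - \mathbb{E}[V(z) \mid \{\bff_i\}]$, followed by a discretization of $z$ and a union bound over the $p$ coordinates, yields a fluctuation of order $\sqrt{p_0 \Phi(-z) \log p}$, which is $o(p_0 \Phi(-z))$ throughout $[0, z_p]$ precisely when $\log p = o(w_n)$ and $m_p$ is chosen suitably. This is the crucial improvement over the proof of Theorem~\ref{thm2}, where the sparsity parameter $s_p$ had to enter the variance bound through pairwise covariances of the indicators.

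Given (a), part (b) proceeds as in Theorem~\ref{thm2}. Since $\FDP(z)/\FDP_{{\rm N}}(z) = V(z)/\{2p\Phi(-z)\}$, claim (a) implies this ratio converges uniformly to $p_0/p$ on $[0, z_p]$. Condition~(C4) guarantees that both the oracle threshold (achieving $\FDP$-level $\alpha p_0/p$) and the data-driven threshold $\hat z_{{\rm N},0}$ defined in \eqref{zBH.def} lie in $[0, z_p]$ with probability tending to one. Continuity and monotonicity of $\FDP_{{\rm N}}(z)$ then yield $(p_0/p)^{-1} \FDP(\hat z_{{\rm N},0}) \to \alpha$ in probability.

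The main obstacle is obtaining the conditional moderate deviation at the right level of uniformity. The statistics $T_j$ are built from adaptive Huber estimators whose Bahadur remainders depend jointly on the random design $\{\bff_i\}$ and the noise, so one must re-examine the proof of Theorem~\ref{RAthm3} in a setting where the design is random through $\bff_i$ and the conclusion must hold simultaneously over all $p$ coordinates and all thresholds $z \in [0, z_p]$. The delicate boundary regime is $z \approx z_p$, where the Bernstein fluctuation nearly matches the mean; the precise rate $\log p = o(w_n)$ is dictated by the requirement that $m_p$ diverge slowly enough to maintain $\log p = o(p_0 \Phi(-z_p))$ while still having $z_p \to \infty$.
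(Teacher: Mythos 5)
Your overall architecture---a uniform ratio law of large numbers for $V(z)$ followed by the threshold-transfer argument of Theorem~\ref{thm2}---is exactly the paper's plan: the paper proves Theorem~\ref{thm3} by rerunning the proof of Theorem~\ref{thm2} with Proposition~\ref{prop3} replaced by Proposition~\ref{prop4}. The gap is in your execution of the uniform LLN, and it occurs in two places. First, the inference ``(C5) $\Rightarrow$ conditionally on $\{\bff_i\}_{i=1}^n$ the statistics $\{T_j\}_{j\in\mathcal{H}_0}$ are independent'' is unjustified: Condition~(C5) asserts \emph{unconditional} independence of $\nu_1,\ldots,\nu_p$, and since $\nu_j=\sigma_j(\bff)u_j$ involves the common factor, unconditional independence of the $\nu_j$ neither implies nor is implied by their conditional independence given $\bff$. (Indeed, with genuinely heteroscedastic $\sigma_j(\cdot)$ and i.i.d.\ $u_j$'s one gets conditional but \emph{not} unconditional independence, e.g.\ because $\cov(\nu_j^2,\nu_k^2)=\cov(\sigma_j^2(\bff),\sigma_k^2(\bff))\neq 0$; (C5) is a different hypothesis, not a consequence.) Second, even granting conditional independence, your conditional Bernstein argument needs a conditional Cram\'er-type moderate deviation, $\mathbb{P}(|T_j|\ge z \mid \{\bff_i\})=2\Phi(-z)\{1+o_{\mathbb{P}}(1)\}$ uniformly in $j$ and $z\in[0,z_p]$. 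No such result is available---Theorem~\ref{RAthm3} is unconditional---and you explicitly defer it as ``the main obstacle'' without resolving it. Since step (a) of your plan rests entirely on it, the proof is incomplete at its crux.

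The paper's Proposition~\ref{prop4} sidesteps both issues by working unconditionally with the linearized statistics $T_{0j}=\sigma_{j,\tau}^{-1}n^{-1/2}\sum_{i=1}^n\{\ell'_\tau(\nu_{ij})-m_{j,\tau}\}$. These are functions of disjoint columns $\{\nu_{ij}\}_{i\le n}$ of the error array, hence \emph{exactly} independent across $j$ under (C5), and they satisfy the unconditional moderate deviation \eqref{uni.GAR}. The Bahadur representation gives $\max_{j\in\mathcal{H}_0}|T_j-T_{0j}|\le \delta_n=c\,n^{-1/2}w_n$ with probability at least $1-Cpe^{-w_n}$ (see \eqref{prop4-1}), after which Chebyshev's inequality on the independent indicators plus the discretization $\{z_\ell\}$ yields the uniform LLN, requiring only $m_p\to\infty$. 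Note also that this identifies where $\log p=o(w_n)$ in (C5) actually bites: in the union bounds $pe^{-w_n}$ for the linearization and the variance estimators, and in keeping $z_0\sim\sqrt{2\log(p/m_p)}$ inside the admissible range $o(\sqrt{w_n})$ of the moderate deviation---not, as you suggest, in balancing a Bernstein fluctuation of order $\sqrt{p_0\Phi(-z)\log p}$ against the mean; under independence a second-moment bound suffices and no $\log p$ factor enters the fluctuation analysis. To salvage your conditional route you would need both to strengthen (C5) to conditional independence of the errors given the factors and to prove the uniform conditional moderate deviation from scratch; the paper's linearization is the shorter and correct path.
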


Theorems~\ref{thm1}--\ref{thm3} provide theoretical guarantees on the FDP control for the B-H procedure with dependence-adjusted $P$-values $P_j=2\Phi(-|T_j|)$, $j=1,\ldots, p$. A similar approach can be defined by using the median-of-means approach, namely, replacing $T_j$'s with $S_j$'s in the definition of $\FDP_{{\rm N}}(z)$ in \eqref{AFDP.def}, which is equivalent to the B-H procedure with $P$-values $Q_j=2\Phi(-| S_j |)$, $j=1,\ldots, p$. Under similar conditions, the theoretical results on the FDP control remain valid.

\begin{theorem} \label{thm4}
Let $ \FDP(z)$ and $\hat z_{{\rm N} ,0}$ be defined in \eqref{FDP.def} and \eqref{zBH.def} with $T_j$'s replaced by $S_j$'s, and let $V=V_n $ in \eqref{blocks.def} satisfy $V \asymp w_n$ for $w_n$ as in Condition~(C2). Moreover, let $\tau=\tau_n$ be as in Condition~(C2).
\begin{itemize}
\item[(i).] Under Conditions~(C1), (C3) and (C4), \eqref{oracle.FDP.converge} holds for any prespecified $0<\alpha <1$.

\item[(ii).] Under Conditions~(C1), (C4) and (C5), \eqref{oracle.FDP.converge} holds for any prespecified $0<\alpha <1$.
\end{itemize}
\end{theorem}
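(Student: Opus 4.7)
}
The plan is to reduce Theorem~\ref{thm4} to the arguments already used for Theorems~\ref{thm2} and \ref{thm3}, by showing that the median-of-means variance estimator $\wt\sigma_{\nu,jj}$ in \eqref{mom-var} enjoys the same uniform relative consistency as the adaptive Huber variance estimator $\hat\sigma_{\nu,jj}$ in \eqref{RA-var}. Once this uniform consistency is in hand, the test statistics $S_j$ and $T_j$ are interchangeable for the purposes of the FDP analysis: they have the same Berry--Esseen and Cram\'er moderate deviation behavior up to a $1+o_\mathbb{P}(1)$ multiplicative factor, and the weak-dependence structure of $\{S_j\}_{j\in\mathcal{H}_0}$ under the null is controlled by the same correlation matrix $\bR_\nu$ as $\{T_j\}_{j\in\mathcal{H}_0}$.

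First I would bound, uniformly over $j$, the deviation of the univariate median-of-means estimator $\wt\sigma_{jj}(V)$ from $\sigma_{jj}$. Since the kernel $h(x,y)=(x-y)^2/2$ is symmetric and, under Condition (C1)(ii), has second moment controlled by $\max_j\mathbb{E}(X_j^4)\leq C_v+ (\mu_j)^2$ (also uniformly bounded via (C4) and sparsity), a decoupled $U$-statistic version of the median-of-means concentration inequality (in the spirit of Joly--Lugosi) yields, for absolute constants $c_1,c_2>0$,
\begin{equation*}
\mathbb{P}\Bigl\{ \bigl| \wt\sigma_{jj}(V) - \sigma_{jj} \bigr| > c_1\, \sigma_{jj} \sqrt{V/n}\,\Bigr\} \leq e^{-c_2 V}.
\end{equation*}
Taking $V\asymp w_n$ with $\log p = o(w_n)$ (which holds in both parts (i) and (ii) under (C3) or (C5)), a union bound gives $\max_{1\leq j\leq p}|\wt\sigma_{jj}(V)/\sigma_{jj}-1|=o_\mathbb{P}(1)$. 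Combining this with the uniform control $\max_{j}\bigl|\hat\bb_j^\T\hat\bSigma_f\hat\bb_j - \|\bSigma_f^{1/2}\bb_j\|^2\bigr|=o_\mathbb{P}(1)$, which follows from Theorem~\ref{RAthm1} applied coordinatewise to the regression \eqref{marginal.model} together with standard concentration of $\hat\bSigma_f$, we deduce
\begin{equation*}
\max_{1\leq j\leq p}\Bigl| \wt\sigma_{\nu,jj}/\sigma_{\nu,jj} - 1 \Bigr| = o_\mathbb{P}(1).
\end{equation*}

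With this uniform consistency of the denominator, I would then repeat the chain of arguments used to establish Theorem~\ref{thm0}, Theorem~\ref{thm1}, and Theorem~\ref{thm2} (respectively Theorem~\ref{thm3}), but with $S_j$ in place of $T_j$. Writing $S_j = (\hat\sigma_{\nu,jj}/\wt\sigma_{\nu,jj})^{1/2}\, T_j$, Theorem~\ref{RAthm3} together with the Slutsky-type reduction from the previous paragraph gives the Cram\'er-type moderate deviation
$
\mathbb{P}(|S_j|\geq z\mid H_{0j}) = \{1+o(1)\}\cdot 2\{1-\Phi(z)\}
$
uniformly over $0\leq z\leq o(n^{1/6})$ and over $j$; hence the analogue of \eqref{unif.consistency} for the $Q_j=2\Phi(-|S_j|)$ holds. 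The ratio consistency $\FDP(z)/\FDP_{\rm N}(z)\to 1$ in part (i) then follows from the same second-moment calculation for $V(z)$ as in Theorem~\ref{thm1}, using Condition (C3) applied to $\bR_\nu$; in part (ii), independence of $\nu_1,\ldots,\nu_p$ under (C5) yields the same convergence by a simpler Bernstein-type argument. Finally, Condition (C4) together with the definition \eqref{zBH.def} of $\hat z_{{\rm N},0}$ gives $\FDP(\hat z_{{\rm N},0}) \to \alpha\cdot (p_0/p)$ in probability, exactly as in the proofs of Theorems~\ref{thm2} and \ref{thm3}.

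The main obstacle, and the only genuinely new ingredient, is the first step: obtaining a sub-exponential deviation bound for $\wt\sigma_{jj}(V)$ that is tight enough to survive a union bound over $p$ coordinates under only a fourth-moment assumption on $X_j$. The subtle points are (a) that $\wt\sigma_{jj}(V)$ is built from a decoupled $U$-statistic rather than a sum of i.i.d.\ terms, so one must invoke a $U$-statistic version of the Hoeffding bound inside each pair of blocks before applying the median trick, and (b) that the blocks have sizes $m$ and $m+r$ which must be handled on equal footing. Once these technical wrinkles are resolved, the remainder of the proof is a streamlined repetition of the earlier arguments in the paper and does not require any new probabilistic input.
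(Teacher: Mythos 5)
Your proposal is correct and follows essentially the same route as the paper: the paper's proof likewise reduces Theorem~\ref{thm4} to the arguments for Theorems~\ref{thm2} and \ref{thm3} by noting that only the variance estimator changes, and then supplies the single missing ingredient, a uniform concentration bound for $\wt\sigma_{jj}(V)$ obtained by applying Theorem~1 of \cite{JL2015} with $h(x,y)=(x-y)^2/2$, $V\asymp w_n$, $\delta = e^{-w_n}$, and a union bound over the $p$ coordinates. The only difference is cosmetic: the paper invokes the Joly--Lugosi theorem directly (with the bound expressed through the kurtosis $\kappa_j$), whereas you sketch re-deriving it via the decoupled-$U$-statistic median trick.
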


\section{Numerical study}
\label{sec4}

\subsection{Implementation}

To implement the proposed procedure, we solve the convex program \eqref{robust.mle} by using our own implementation in Matlab of the traditional method of scoring, which is an iterative method starting at an initial estimate $\hat{\btheta}^0 \in \bbr^{K+1}$. Here, we take $\hat{\btheta}^0 = \mo$; using the current estimate $\hat{\btheta}^t$ at iteration $t=0,1,2,\ldots$, we update the estimate by the Newton-Raphson step:
$$
	\hat{\btheta}^{t+1} = \hat{\btheta}^t +   \bigg\{ \frac{1}{n} \sn \ell''_\tau(\bz_i^t )  \bigg\}^{-1} (\mathbb{G}^\T \mathbb{G} )^{-1} \mathbb{G}^\T (\ell'_\tau(\bz_1^t), \ldots, \ell'_\tau(\bz_n^t))^\T ,
$$
where $\bz^t = (X_{1j}, \ldots, X_{nj})^\T - \mathbb{G}\hat{\btheta}^t$ and $\mathbb{G}=( {\bm g}_{1}, \ldots,  {\bm g}_{ n})^\T \in \bbr^{n\times (K+1)}$ with ${\bm g}_{i} =(1, \bff_i^\T)^\T$.

For each $1\leq j \leq p$, we apply the above algorithm with $\tau =\tau_j := c \,\hat{\sigma}_j \sqrt{n/\log(np)}$ to obtain $(\hat{\mu}_j, \hat \bb_j^\T)^\T$, where $\hat{\sigma}_j^2$ denotes the sample variance of the fitted residuals using OLS and $c >0$ is a control parameter. We take $c = 2$ in all the simulations reported below. In practice, we can use a cross-validation procedure to pick $c$ from only a few candidates, say $\{0.5, 1, 2 \}$.


\subsection{Simulations via a synthetic factor model}
In this section, we perform Monte Carlo simulations to illustrate the performance of the robust test statistic under approximate factor models with general errors. Consider the Fama-French three factor model:
\bee
	X_{ij} = \mu_j + \bb_j^\T \bff_i +  u_{ij} , \ \ i=1,\ldots, n ,
\eee
where $\bu_i = (u_{i1}, \ldots, u_{ip})^\T$ are i.i.d. copies of $\bu = (u_1,\ldots, u_p)^\T$. We simulate $\{\bb_j\}_{j=1}^p$ and $\{\bff_i\}_{ i =1}^n$ independently from $N_3(\bmu_B,\bSigma_B)$ and $N_3( \mo ,\bSigma_f)$, respectively. To make the model more realistic, parameters are calibrated from the daily returns of S\&P 500's top 100 constituents (chosen by market cap), for the period July 1st, 2008 to June 29th, 2012. 

To generate dependent errors, we set $\bSigma_u = \cov(\bu)$ to be a block diagonal matrix where each block is  four-by-four correlation matrix with equal off-diagonal entries generated from Uniform$[0,0.5]$.  The hypothesis testing is carried out under the alternative: $\mu_j= \mu$ for $1\leq j\leq \pi_1p$ and $\mu_j=0$ otherwise. In the simulations reported here, the ambient dimension $p = 2000$, the proportion of true alternatives $\pi_1=0.25$  and the sample size $n$ takes values in $\{80, 120\}$.  For simplicity, we set $\lambda = 0.5$ in our procedure and use the Matlab package \texttt{mafdr} to compute the estimate $\hat{\pi}_0(\lambda)$ of $\pi_0=1-\pi_1$.  
For each test, the empirical false discovery rate (FDR) is calculated based on $500$ replications with FDR level $\alpha$ taking values in $\{ 5\%, 10\%, 20\% \}$. The errors $\{\bu_i\}_{i=1}^n$ are generated independently from the following distributions:
\begin{itemize}
\item \textbf{Model 1}.  $\bu \sim N(\mo, \bSigma_u)$: Centered normal random errors with covariance matrix $\bSigma_u$;
\item \textbf{Model 2}. $\bu \sim (1/\sqrt{5})\, t_{2.5}(\mo, \bSigma_u)$: Symmetric and heavy-tailed errors following a multivariate $t$-distribution with degrees of freedom 2.5 and covariance matrix $\bSigma_u$;
\item \textbf{Model 3}. $\bu = 0.5 \bu_{{\rm N}} + 0.5 (\bu_{{\rm LN}} - \e \bu_{{\rm LN}} )$, where $\bu_{{\rm N}} \sim N(\mo, \bSigma_u)$ and $\bu_{{\rm LN}} \sim \exp\{ N(\mo, \bSigma_u) \}$ is independent of $\bu_{{\rm N}}$. This model admits asymmetric and heavy-tailed errors;

\item \textbf{Model 4}. $\bu = 0.25 \bu_{t} + 0.75 ( \bu_{{\rm W}} - \e \bu_{{\rm W}}  )$, where $\bu_t \sim t_{4}(\mo, \bSigma_u)$ and the $p$ coordinates of $\bu_{{\rm W}}$ are i.i.d. random variables following the Weibull distribution with shape parameter $0.75$ and scale parameter $0.75$.

\end{itemize}

The proposed \uline{R}obust \uline{D}ependence-\uline{A}djusted (RD-A) testing procedure is compared with the \uline{O}rdinary \uline{D}ependence-\uline{A}djusted (OD-A) procedure that uses OLS to estimate unknown parameters in the factor model, and also with the naive procedure where we directly perform multiple marginal $t$-tests ignoring the common factors. We use RD-A$_{{\rm N}}$ and RD-A$_{{\rm B}}$ to denote the RD-A procedure with normal and bootstrap calibration. The number of bootstrap replications is set to be $B=2000$. The signal strength $\mu$ is taken to be $\sqrt{2(\log p)/n}$ for Models 1, 2 and 4, and $\sqrt{3(\log p)/n}$ for Model 3. Define the false negative rate ${\rm FNR} =\e \{ T/( p -  R)\}$,   where $T$ is the number of falsely accepted null hypotheses and $R$ is the number of discoveries. The true positive rate (TPR) is defined as the average ratio between the number of correct rejections and $p_1 = \pi_1 p$. Empirical FDR, FNR and TPR for the RD-A$_{\rm B}$, RD-A$_{{\rm N}}$, OD-A and naive procedures under different scenarios are presented in Tables \ref{tab1} and \ref{tab2}. To save space, we leave the numerical comparison between the RD-A$_{{\rm N}}$ and OD-A procedures under some additional models in the supplementary material [\cite{ZBFL2017}], along with comparisons across a range of sample sizes and signal strengths.

\begin{table}[tbh]
{\begin{tabular}{ccrrrrrr}
\hline \vspace{-0.25cm} \\
&   &  \multicolumn{6}{c}{Normal}      \\
 &  & \multicolumn{3}{c}{$n=80$} & \multicolumn{3}{c}{$n=120$}     \\
  \cline{3-5} \cline{6-8}  \vspace{-0.2cm}   \\
   &  & $\alpha=$5\% & 10\% & 20\%  & 5\%  &  10\%   & 20\%	  \\
 \hline  \vspace{-0.2cm}  \\
\multirow{3}{*}{FDR} & RD-A$_{{\rm B}}$ & 3.66\%  &  7.79\%  &  16.64\% & 4.10\% & 8.45\%  & 17.67\%   \\
& RD-A$_{{\rm N}}$ & 6.22\%  &  11.61\%  &  21.86\% & 5.69\% & 10.93\%  & 21.05\%   \\
 & OD-A &  7.51\%  &  13.70\%  &  24.92\% & 6.49\%  & 12.24\% & 23.01\%    \\
&Naive& 8.35\%  & 13.35\%  &  19.40\% & 7.61\%  & 10.97\%  & 17.67\%   \\
\hline \vspace{-0.2cm} \\
\multirow{3}{*}{FNR} &  RD-A$_{{\rm B}}$  &  3.87\% & 2.17\% & 0.99\% & 3.12\% & 1.72\% & 0.79\%    \\
& RD-A$_{{\rm N}}$ & 2.65\%  &  1.49\%  &  0.68\% & 2.44\% & 1.36\%  & 0.62\%   \\
 & OD-A &  2.16\% & 1.21\% & 0.55\% & 2.13\% & 1.19\%  & 0.53\%     \\
&Naive&  20.10\% & 19.29\% & 18.61\% & 20.25\% & 19.17\%  & 18.26\% \\
\hline \vspace{-0.2cm} \\
\multirow{3}{*}{TPR} &  RD-A$_{{\rm B}}$ & 88.05\% & 93.50\% & 97.18\% & 90.45\%  & 94.89\%  & 97.79\% \\
 & RD-A$_{{\rm N}}$ & 91.99\%  &  95.64\%  &  98.13\% & 92.64\% & 96.03\%  & 98.30\%   \\
& OD-A &  93.53\% & 96.50\% & 98.53\% & 93.59\%  & 96.55\% & 98.57\%   \\
&Naive& 22.71\% & 28.80\% & 37.63\% & 21.96\%  & 28.20\% & 36.79\%   \\
\hline \vspace{-0.25cm} \\
&   &  \multicolumn{6}{c}{Student's $t$}       \\
 &  & \multicolumn{3}{c}{$n=80$} & \multicolumn{3}{c}{$n=120$}    \\
  \cline{3-5}\cline{6-8}    \\
   & & $\alpha=$5\% & 10\% & 20\%  & 5\%  &  10\%   & 20\%	  \\
 \hline  \vspace{-0.2cm}  \\
\multirow{3}{*}{FDR} &  RD-A$_{{\rm B}}$ & 2.98\%  & 6.76\%  &  15.36\% &  3.69\% &  7.89\% & 17.01\%    \\
 & RD-A$_{{\rm N}}$ &4.22\%  & 8.72\%  &  17.99\%  & 4.26\% & 8.83\%  & 18.44\%   \\
& OD-A &  6.43\%  &  12.72\%  &  25.17\% & 5.77\%  & 11.64\% & 23.63\%       \\
&Naive &  10.05\%  &  13.36\%  & 19.45\% &  7.52\%  & 11.07\% & 17.50\%   \\
\hline \vspace{-0.2cm} \\
\multirow{3}{*}{FNR} & RD-A$_{{\rm B}}$  & 1.97\% & 1.31\% & 0.80\% &  1.58\% & 1.07\% & 0.68\%  \\
 & RD-A$_{{\rm N}}$ & 2.17\%  & 1.51\%  &  0.97\% & 2.00\% & 1.40\%  & 0.91\%   \\
& OD-A & 1.93\% & 1.39\% & 1.00\% & 1.86\% & 1.34\%  & 0.93\%    \\
&Naive&  19.75\% & 19.12\% & 18.71\% & 19.93\% & 18.99\% & 18.46\%   \\
\hline \vspace{-0.2cm} \\
\multirow{3}{*}{TPR} &  RD-A$_{{\rm B}}$  &  93.79\% & 95.95\% & 97.60\% & 95.03\%  & 96.72\%  & 97.98\%  \\
 & RD-A$_{{\rm N}}$ & 93.04\%  &  95.23\%  &  97.03\% &93.55\% & 95.54\%  & 97.18\%   \\
& OD-A & 93.91\% & 95.74\% & 97.47\% & 94.09\%  & 95.86\% & 97.43\%      \\
&Naive& 25.45\% & 31.95\% & 41.39\% & 23.63\%  & 29.70\% & 38.81\%  \\\hline
\end{tabular}
}
\vspace{0.2cm}
\caption{Empirical FDR, FNR and TPR based on a factor model with dependent errors following a normal distribution (Model 1) and a $t$-distribution (Model 2).}
\label{tab1}
\end{table}

\begin{table}[tbh]\centering
{\begin{tabular}{ccrrrrrr}
\hline \vspace{-0.25cm} \\
&   &  \multicolumn{6}{c}{Mixture Normal/Lognormal}      \\
 &  & \multicolumn{3}{c}{$n=80$} & \multicolumn{3}{c}{$n=120$}     \\
  \cline{3-5} \cline{6-8}  \vspace{-0.2cm}   \\
   &  & $\alpha=$5\% & 10\% & 20\%  & 5\%  &  10\%   & 20\%	  \\
 \hline  \vspace{-0.2cm}  \\
\multirow{3}{*}{FDR} & RD-A$_{{\rm B}}$ & 8.40\%  &  13.33\%  &  22.04\% & 8.02\% & 12.98\%  & 21.99\%   \\
&RD-A$_{{\rm N}}$ & 10.29\%  &  16.02\%  &  25.87\% & 9.16\% & 14.65\%  & 24.61\%   \\
 & OD-A &  12.18\%  &  18.46\%  &  29.24\% &  10.28\%  & 16.17\% & 26.79\%    \\
&Naive& 7.79\%  & 12.18\%  &  18.49\% & 7.99\%  & 11.77\%  & 17.93\%   \\
\hline \vspace{-0.2cm} \\
\multirow{3}{*}{FNR} &  RD-A$_{{\rm B}}$  &  0.32\% & 0.09\% & 0.02\% & 0.29\% & 0.11\% & 0.03\%    \\
&RD-A$_{{\rm N}}$ & 0.57\%  &  0.25\%  & 0.07\% & 0.56\% & 0.23\%  & 0.06\%   \\
 & OD-A &  0.26\% & 0.08\% & 0.02\% & 0.29\% & 0.10\%  & 0.03\%     \\
&Naive&  18.66\% & 16.99\% & 15.26\% & 18.51\% & 16.67\%  & 14.78\% \\
\hline \vspace{-0.2cm} \\
\multirow{3}{*}{TPR} &  RD-A$_{{\rm B}}$ & 99.07\% & 99.73\% & 99.95\% & 99.15\%  & 99.70\%  & 99.92\%  \\
& RD-A$_{{\rm N}}$ & 98.36\%  &  99.31\%  &  99.81\% & 98.38\% & 99.36\%  & 99.82\%   \\
& OD-A & 99.26\% & 99.76\% & 99.96\% & 99.16\%  & 99.72\% & 99.93\%   \\
&Naive& 28.39\% & 37.37\% & 50.14\% & 29.72\%  & 39.34\% & 51.95\%   \\
\hline \vspace{-0.25cm} \\
&   &  \multicolumn{6}{c}{Mixture Student's $t$/Weibull }       \\
 &  & \multicolumn{3}{c}{$n=80$} & \multicolumn{3}{c}{$n=120$}    \\
  \cline{3-5}\cline{6-8}    \\
   & & $\alpha=$5\% & 10\% & 20\%  & 5\%  &  10\%   & 20\%	  \\
 \hline  \vspace{-0.2cm}  \\
\multirow{3}{*}{FDR} &  RD-A$_{{\rm B}}$ & 7.51\%  &  12.05\%  &  20.43\% & 7.03\% & 11.66\% & 20.49\%    \\
 &RD-A$_{{\rm N}}$ & 8.91\%  &  14.34\%  &  24.11\% & 7.85\% & 13.17\%  & 23.11\%   \\
& OD-A &  11.00\%  &  17.11\%  &  27.67\% &  9.09\%  & 14.80\% & 25.37\%       \\
&Naive& 9.33\%  & 13.48\%  &  20.30\% & 8.14\%  & 12.52\% & 18.85\%   \\
\hline \vspace{-0.2cm} \\
\multirow{3}{*}{FNR} & RD-A$_{{\rm B}}$  &  0.62\% & 0.23\% & 0.06\% & 0.62\% & 0.25\% & 0.08\%  \\
& RD-A$_{{\rm N}}$ & 0.62\%  &  0.24\%  &  0.06\% &0.60\% & 0.24\%  & 0.07\%   \\
& OD-A & 0.30\% & 0.11\% & 0.03\% & 0.42\% & 0.16\%  & 0.05\%    \\
&Naive& 19.91\% & 18.78\% & 18.08\% & 20.71\% & 20.02\% & 19.36\%   \\
\hline \vspace{-0.2cm} \\
\multirow{3}{*}{TPR} &  RD-A$_{{\rm B}}$  &  98.16\% & 99.34\% & 99.83\% & 98.16\%  & 99.29\%  & 99.78\%  \\
&RD-A$_{{\rm N}}$ & 98.18\%  &  99.31\%  &  99.82\% & 98.22\% & 99.30\%  & 99.80\%   \\
 & OD-A &99.14\% & 99.69\% & 99.92\% & 98.78\%  & 99.54\% & 99.86\%      \\
&Naive& 23.41\% & 30.24\% & 39.65\% & 20.49\%  & 25.90\% & 34.07\%  \\\hline
\end{tabular}
\vspace{0.2cm}
\caption{Empirical FDR, FNR and TPR based on a factor model with dependent errors following a mixture normal/lognormal distribution (Model 3) and a mixture Student's $t$/Weibull distribution (Model 4).  }
\label{tab2}}
\end{table}

For weakly dependent errors following the normal distribution and $t$-distribution, Table \ref{tab1} shows that the RD-A procedure consistently outperforms the OD-A method, in the sense that the RD-A method provides a much better control of the FDR at the expense of slight compromises of the FNR and TPR. Should the FDR being controlled at the same level, the robust method will be more powerful.  In Table \ref{tab2}, when the errors are both asymmetric and heavy-tailed, the RD-A procedure has the biggest advantage in that it significantly outperforms the OD-A on controlling the FDR at all levels while maintaining low FNR and high TPR. Together, these results show that the RD-A procedure is indeed robust to outliers and does not lose efficiency when the errors are symmetric and light-tailed. In terms of controlling FDR, Models 3 and 4 present more challenges than Models 1 and 2 due to being both heavy-tailed and asymmetric. In Table \ref{tab2} we see that although both the RD-A and OD-A methods achieve near-perfect power, the empirical FDR is higher than the desired level across all settings and much more higher for OD-A. Hence we compare the FDR of the RD-A and OD-A methods for various sample sizes in Figure \ref{fig:model4}. We see that the empirical FDR decreases with increase in sample size, while consistently outperforming the OD-A procedure. The difference between the two methods is greater for lower sample sizes, reinforcing the usefulness of our method for high dimensional heavy-tailed data with moderate sample sizes.

The naive procedure suffers from a significant loss in FNR and TPR. The reasons are twofold: (a) the naive procedure ignores the actual dependency structure among the variables; (b) the signal-to-noise ratio of $H_{0j}: \mu_j = 0$ for the naive procedure is $\sigma_{jj}^{-1/2} |\mu_j|$, which can be much smaller than $\sigma_{\nu,jj}^{-1/2} |\mu_j|$ for the dependence-adjusted procedure.

\begin{figure}        \centering
                \includegraphics[width=.49\textwidth]{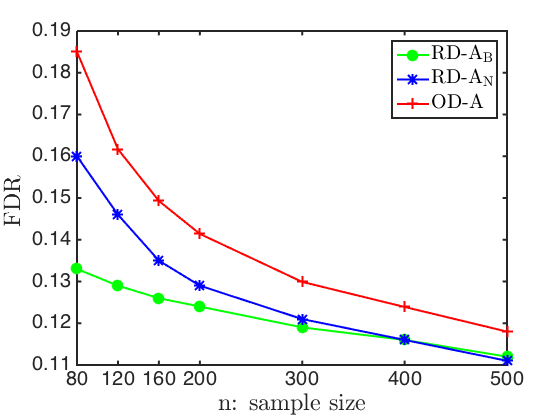}
                   \includegraphics[width=.49\textwidth]{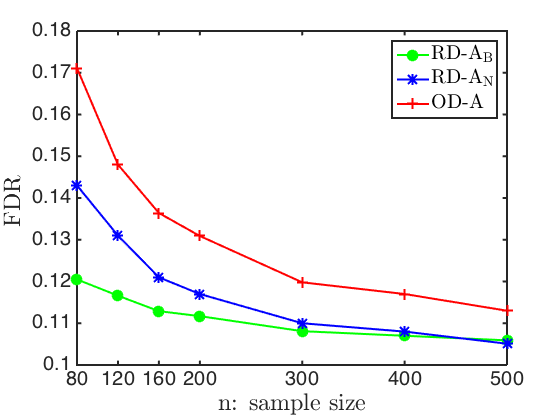}

                \caption{Empirical FDR of the testing problem at the $10\%$ significance level, when the data follows a mixture normal/lognormal distribution (Model 3) on the left panel and a mixture Student's $t$/Weibull distribution (Model 4) on the right panel, and the sample size varies. }

                \label{fig:model4}
\end{figure}

\subsection{Stock market data}

In this section, we apply our proposed robust dependence-adjusted multiple testing procedure to monthly stock market data. Consider Carhart's four-factor model [\cite{C1997}] on S\&P 500 index, where the excess return of a stock has the following representation:
\begin{align}
	r_{jt} & = \mu_j + \beta_{j,{\rm MKT}}({\rm MKT}_t-r_{ft})+\beta_{j,{\rm SMB}}{{\rm SMB}_t} \nn \\
	 & \quad  \quad\quad\quad\quad\quad\quad\quad\quad +\beta_{j,{\rm HML}}{{\rm HML}_t}+\beta_{j,{\rm UMD}}{{\rm UMD}_t}+u_{jt} , \label{carhart}
\end{align}
for  $j=1,\ldots, p$ and $t = 1, \ldots, T$. Here $r_{jt}$ is the excess returns of stock $j$ at month $t$, $r_{ft}$ is the risk free interest rate at month $t$, and MKT, HML, SMB and UMD represent the market, value, size, and momentum factors respectively. We are interested in the value $\mu_j$, which represents the {\it alpha} of stock $j$. A stock can be said to have excess returns if its alpha is positive, or in other words, the stock exhibits returns higher than those that can be accounted for by the four factors. If the alpha is negative, the stock is consistently underperforming, given the level of risk it undertakes. Detecting nonzero alpha is important since it is directly related to the efficient equity market hypothesis.   When the market is inefficient, we can conduct multiple hypothesis testing to identify those stocks in the market that have statistically significant alphas.
When the returns of mutual fund data are used, the test is related to test whether the fund manager has skills or not [\cite{BSW2010}].
All the data in this section was obtained from Kenneth French's website and the COMPUSTAT and CRSP databases.

We obtain monthly data for $393$ S\&P 500 constituents over the time period from January 2005 to December 2013, after removing those stocks that have missing values or have discontinuous inclusion in the index. The stock returns exhibit severely heavy-tails, as illustrated by the histogram of the excess kurtosis of the data in Figure~\ref{fig:stocks}. Among the $393$ series, $112$ have distributions whose tails are fatter than the $t$-distribution with $5$ degrees of freedom.
\begin{figure}        \centering
                \includegraphics[width=.9\textwidth]{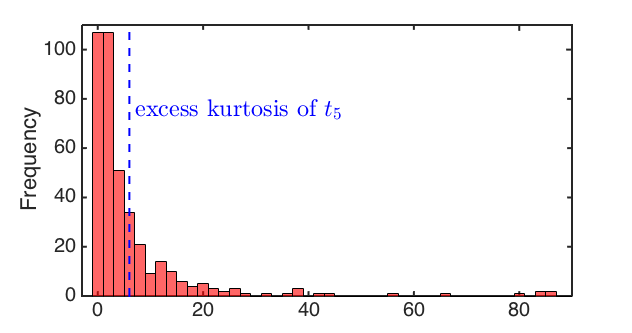}
                \caption{Histogram of excess kurtosises of monthly returns of $393$ S\&P 500 constituents from January 2005 to December 2013. The excess kurtosis of the $t_5$ distribution is shown for reference. }

                \label{fig:stocks}
\end{figure}

The regression in \eqref{carhart} is carried out over rolling windows: for each month, we evaluate the model using data from the preceding three years. For each rolling window, we simultaneously test the hypotheses $H_{0j} :  \mu_j = 0$ versus $H_{1j}: \mu_j \neq 0$ for $j=1,\ldots , p$, using the proposed robust dependence-adjusted procedure. We see that out of a portfolio of size  $393$, only a few stocks exhibit statistically significant nonzero alphas at the FDR threshold of $5\%$, $10\%$ and $20\%$. Table~\ref{tabreal} summarizes the results for the number of selected stocks and estimated alphas with the FDR controlled at 5\%. For the robust dependence-adjusted procedure, we see that no stock is selected on average, with maximum $2$ stocks selected over the entire time period. In particular, our method does not select any stocks from the third quarter of 2008 to the third quarter of 2010, coinciding with the financial crisis during which the market volatility is much higher. Moreover, the estimated alphas for the selected stocks are much higher than those not selected by the robust multiple testing procedure. This is represented as $|\hat{\mu}_j|$ in Table~\ref{tabreal}. The naive method, which directly performs multiple $t$-tests ignoring the common factors, appears to be unstable with the number of stocks selected being extremely variable.
Additionally, a tremendously  large number of stocks are selected in a few time periods, pointing towards false discoveries. In summary, the robust dependence-adjusted multiple testing procedure is particularly suited for the problem of finding a few stocks with nonzero alphas, which is explained by the focus on a balanced panel of highly traded stocks with large capitalizations, namely, the constituents of the S\&P 500.

\begin{table}[tbh]\centering
{\begin{tabular}{lcccccc}
\hline \vspace{-0.25cm} \\
 Variable  & Method&Mean & Std. Dev.&Median &Min &Max \vspace{0.1cm} \\
 \hline  \vspace{-0.2cm}  \\
 \multirow{2}{*}{Number of selected stocks} &RD-A$_{{\rm N}}$  &0.18 &0.42 &0&0&2\\
 & OD-A& 0.94 & 1.59 & 0 & 0 & 7\\
&Naive&4.39 & 22.46 &0&0&178\\
\hline \vspace{-0.2cm} \\
\multirow{2}{*}{$|\hat{\mu}_j|$ for selected stocks } &RD-A$_{{\rm N}}$&4.19\%&1.41\%&4.31\%&2.43\%&6.58\%\\
&OD-A& 3.37\% &1.00\% &3.24\%&1.70\%&6.34\%\\
&Naive& 3.76\% &1.20\% &3.47\%&2.77\%&8.01\%\\
\hline \vspace{-0.2cm} \\
\multirow{2}{*}{$|\hat{\mu}_j|$ for non-selected stocks }  &RD-A$_{{\rm N}}$&1.01\%&0.09\%&1.00\%&0.90\%&1.25\%\\
&OD-A& 1.09\% &0.11\% &1.08\%&0.91\%&1.32\%\\
&Naive& 1.75\% &0.35\% &1.63\%&1.35\%&2.45\%\\\hline
\end{tabular}
\vspace{0.1cm}
\caption{Summary of the three testing procedures based on $393$ stocks in S\&P 500 between January 2005 and December 2013. A rolling window of 3 years is used for estimation and selecting stocks with significant nonzero alpha. The stocks are selected at FDR level $5\%$.}
\label{tabreal}}
\end{table}

\subsection{Gene expression data}

In this section, we apply the proposed procedure to the analysis of a neuroblastoma data set reported in \cite{O2006} to identify differentially expressed genes between the group of patients who had 3-year event-free survival after the diagnosis of neuroblastoma and the group of patients who did not. This data set consists of 251 patients of the German Neuroblastoma Trials NB90-NB2004, diagnosed between 1989 and 2004. The complete data set, obtained via the MicroArray Quality Control phase-II (MAQC-II) project [\cite{Shi2012}], includes gene expression over 10,707 probe sites. There are 246 subjects with 3-year event-free survival information available (56 positive and 190 negative). See \cite{O2006} for more details about the data sets.

In the first stage, we use standard principal component analysis on the two samples to obtain the factors, based on which we construct dependence-adjusted $P$-values to conduct multiple testing in the second step. Note that the test statistic given in \eqref{test.stat} can be directly generalized to the two-sample case: Given two groups of $p$-dimensional ($p=10,707$) observations with sizes $n_1=56$ and $n_2=190$, we compute robust mean and variance estimators $(\hat{\mu}_{1j}, \hat{\mu}_{2j})$ and $( \hat{\sigma}_{1\nu,jj} , \hat{\sigma}_{2\nu,jj} )$ for $j=1,\ldots, p$. Define two-sample test statistics $T_j= (\hat{\mu}_{1j} - \hat{\mu}_{2j}) / (\hat{\sigma}_{1\nu,jj} /n_1 + \hat{\sigma}_{2\nu,jj}  / n_2)^{1/2}$ so that the corresponding $P$-values are $\{ 2\Phi(-|T_j|) \}_{j=1}^p$.

The number of factors is estimated by the eigenvalue ratio estimator proposed in \cite{AH2013}, which was also used in the context of factor-adjusted multiple testing in \cite{FH2017}. The estimator is defined as $\hat{K}= \text{argmax}_{1<k<k_{\max}}(\hat{\lambda}_k/\hat{\lambda}_{k+1})$, where $\hat{\lambda}_j$ is the $j$th eigenvalue of the sample covariance matrix and $k_{\max}$ is the maximum possible number of factors. Following this procedure, we use $K=2$ to model the latent structure in the data.

Next, we conduct multiple testing using the proposed robust dependence-adjusted procedure and the naive procedure based on two-sample $t$-tests. At FDR level 1\%, we detect $3779$ genes and the naive procedure detects $3236$ genes; while at FDR level 5\%, we discover $5223$ genes and the naive procedure discovers $4685$ genes. In general, taking the latent structure into account causes a visible increase in the number of genes that are declared statistically significant regardless of the prechosen FDR level, reflecting the improved power of our method. This phenomenon is in accord with that in \cite{DS2012}.  These results may serve as an exploratory step for more refined analyses regarding those significant genes.

\section{Summary and discussion}
\label{sec5}

This paper consists of two main parts with each one being of independent interest. In the first part, we study the conventional robust $M$-estimation [\cite{H1973}] from a new perspective by allowing the robustification parameter $\tau$ to diverge with the sample size to balance the bias and robustness of the estimator. Our main theoretical contribution (Theorem~\ref{RAthm1}) is a nonasymptotic Bahadur representation of the proposed robust estimator along with a sub-Gaussian-type deviation bound if the error variable has a finite second moment. As by-products, we prove the Berry-Esseen inequality and a Cram\'er-type moderate deviation theorem for the estimator. These probabilistic results are particularly useful in investigating robustness and accuracy of the $P$-values in multiple testing, among other high dimensional statistical inference problems [\cite{FHY2007}, \cite{DHJ2011}, \cite{CSZ2016}].

In the second part, we focus on large-scale multiple testing for dependent and heavy-tailed data. To characterize the dependence, we employ a multi-factor model similar to that used in \cite{DS2012}, \cite{FHG2012} and \cite{FH2017} but with an observable factor. To achieve robustness, we propose a Huber loss based approach to construct test statistics for testing the individual hypotheses. Under mild conditions, our procedure asymptotically controls the overall false discovery proportion at the nominal level. Thorough numerical results on both simulated and real world datasets are also provided to back up our theory. It is shown that the newly proposed robust dependence-adjusted method performs well numerically in terms of both the size and power. It significantly outperforms the multiple $t$-tests under strong dependence, and is applicable even when the true error distribution deviates wildly from the normal distribution. A more interesting and challenging problem is when the dependence structure is characterized by latent factors. In this case, robust estimators of the unobservable factors along with the loadings are required. Large-scale simultaneous inference for latent factor models with heavy-tailed errors is our ongoing work. We leave the details of the results elsewhere in the future.

\newpage

\begin{frontmatter}
\title{{Supplement to ``A New Perspective on Robust $M$-Estimation:  Finite Sample Theory and Applications to Dependence-Adjusted Multiple Testing''}}

\begin{aug}
\author{\fnms{Wen-Xin} \snm{Zhou}\thanksref{m0,m1}\ead[label=e1]{wez243@ucsd.edu}},
\author{\fnms{Koushiki} \snm{Bose}\thanksref{m1}\ead[label=e2]{bose@princeton.edu}},
\author{\fnms{Jianqing} \snm{Fan}\thanksref{m2,m1}\ead[label=e3]{jqfan@princeton.edu}} \\
\and
\author{\fnms{Han} \snm{Liu}\thanksref{m1}
\ead[label=e4]{hanliu@princeton.edu}
\ead[label=u1,url]{http://www.foo.com}}

\runauthor{Zhou, Bose, Fan and Liu}

\affiliation{University of California, San Diego\thanksmark{m0}, Princeton University\thanksmark{m1} \\
and Fudan University\thanksmark{m2}}

\address{W.-X. Zhou \\
Department of Mathematics \\
University of California, San Diego \\
La Jolla, California 92093 \\
USA \\
\printead{e1}}

\address{
K. Bose \\
H. Liu \\
Department of Operations Research \\
~~~and Financial Engineering \\
Princeton University \\
Princeton, New Jersey 08544 \\
USA \\
{E-mail: }\printead*{e2}  \\ \phantom{E-mail:} \printead*{e4}}

\address{
J. Fan\\
School of Data Science \\
Fudan University \\
Shanghai 200433 \\
China \\
and \\
Department of Operations Research \\
~~~and Financial Engineering \\
Princeton University \\
Princeton, New Jersey 08544 \\
USA \\
\printead{e3}}
\end{aug}

\begin{abstract}
This supplemental material contains the proofs for the theoretical results in the main text and additional simulation results.
\end{abstract}

\end{frontmatter}

\appendix

\section{Proofs of the results in Section~2}
\label{sec6}

In this section, we present the proofs to the theoretical results from Section~\ref{sec2}. Throughout, we use $C, C_1, C_2, \ldots$ and $c, c_1, c_2, \ldots$ to denote positive constants independent of $n$ and $p$, which may take different values at each occurrence. First, we collect two useful propositions in Section~\ref{pre.sec}.


\subsection{Preliminaries}
\label{pre.sec}

Proposition~\ref{RAlem1} reveals that the approximation error vanishes as $\tau$ diverges.

\begin{proposition}  \label{RAlem1}
Under Condition~\ref{cond2.1}, it holds as long as $\tau \geq  8K_1^2\, \sigma$ that
\begin{align}
\|  \bS^{1/2} ( \btheta^* - \btheta^*_\tau ) \|    \leq  2 \| \bS^{-1/2} \overline{\bS} \bS^{-1/2} \|^{1/2}  \frac{\sigma  }{\tau}  ,  \label{lem1.1}
\end{align}
where $K_1= (K_0^2+1)^{1/2}$, $\bS  = \e (\bZ \bZ^\T)$ and $\overline{\bS} = \e\{ {\sigma}^2(\bX) \bZ \bZ^\T   \}$.
\end{proposition}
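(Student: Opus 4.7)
The plan is to exploit the first-order optimality condition for $\btheta^*_\tau$ together with the pointwise bound $|R_\tau(u)| \leq u^2/\tau$, where $R_\tau(u) := u - \ell'_\tau(u) = \mathrm{sgn}(u)(|u|-\tau)_+$. Setting $\bDelta := \btheta^*_\tau - \btheta^*$ and $\nu := \sigma(\bX)\varepsilon$, and using that $\e(\nu\bZ) = \mathbf{0}$ by the independence of $\varepsilon$ and $\bX$ together with $\e\varepsilon = 0$, the stationarity condition $\e\{\ell'_\tau(\nu-\bZ^\T\bDelta)\bZ\} = \mathbf{0}$ rearranges into
\[
\bS\bDelta = -\e\{R_\tau(\nu - \bZ^\T\bDelta)\bZ\}.
\]

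I would then linearize $R_\tau(\nu - \bZ^\T\bDelta)$ around $\nu$ via $R_\tau'(u) = \mathbf{1}(|u|>\tau)$, writing $R_\tau(\nu - \bZ^\T\bDelta) = R_\tau(\nu) - W\bZ^\T\bDelta$ with $W := \int_0^1 \mathbf{1}\{|\nu - s\bZ^\T\bDelta| > \tau\}\, ds \in [0,1]$. Substituting yields $\bH\bDelta = -\e\{R_\tau(\nu)\bZ\}$ for $\bH := \e\{(1-W)\bZ\bZ^\T\}$, reducing the proof to two ingredients. First, a source-term bound: using $|R_\tau(\nu)| \leq \nu^2/\tau$ and $\e(\nu^2 \mid \bX) = \sigma^2(\bX)$ (from independence together with $\e\varepsilon^2=1$), Cauchy--Schwarz gives
\[
\|\bS^{-1/2}\e\{R_\tau(\nu)\bZ\}\| \leq \tau^{-1}\sup_{\|\bu\|=1}\e\{\sigma^2(\bX)|\bu^\T\bS^{-1/2}\bZ|\} \leq \sigma\,\|\bS^{-1/2}\overline{\bS}\bS^{-1/2}\|^{1/2}/\tau.
\]
Second, a restricted strong convexity estimate $\bH \succeq \tfrac{1}{2}\bS$, equivalently $\|\bS^{-1/2}(\bS-\bH)\bS^{-1/2}\| \leq 1/2$; this follows from Markov's inequality $\mathbf{1}\{|\nu - s\bZ^\T\bDelta|>\tau\} \leq (\nu - s\bZ^\T\bDelta)^2/\tau^2$, Cauchy--Schwarz, cancellation of the cross term by the same independence argument, and the sub-Gaussianity of $\bS^{-1/2}\bZ$ (with parameter controlled by $K_1$), producing an operator-norm estimate of order $K_1^2(\sigma^2 + \|\bS^{1/2}\bDelta\|^2)/\tau^2$. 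Combining the two ingredients delivers the claim $\|\bS^{1/2}\bDelta\| \leq 2\sigma\,\|\bS^{-1/2}\overline{\bS}\bS^{-1/2}\|^{1/2}/\tau$.

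The main obstacle is that the restricted strong convexity estimate is self-referential: the lower bound on $\bH$ depends on $\|\bS^{1/2}\bDelta\|$ itself. I would close this via a convexity-based localization argument. Since $h(\bDelta) := \e\ell_\tau(\nu - \bZ^\T\bDelta)$ is convex with unique minimizer, it suffices to show $h(\bDelta) > h(\mathbf{0})$ on the sphere $\|\bS^{1/2}\bDelta\| = r^* := 2\sigma\,\|\bS^{-1/2}\overline{\bS}\bS^{-1/2}\|^{1/2}/\tau$; by convexity this forces the minimizer into the open ball of radius $r^*$. A second-order Taylor expansion of $h$ around $\mathbf{0}$, together with $\ell''_\tau(u) = \mathbf{1}(|u|\leq\tau) \geq 1 - u^2/\tau^2$ and the same moment estimates used above, produces a lower bound whose quadratic term dominates both the linear bias of size $\sigma\,\|\bS^{-1/2}\overline{\bS}\bS^{-1/2}\|^{1/2}/\tau$ and the residual quartic correction precisely when $\tau \geq 8 K_1^2 \sigma$, completing the proof.
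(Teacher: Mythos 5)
Your overall architecture (stationarity of $\btheta^*_\tau$, a source-term bound, and a restricted strong convexity bound) parallels the paper's proof, and your source-term estimate $\|\bS^{-1/2}\e\{R_\tau(\nu)\bZ\}\| \leq \sigma\|\bS^{-1/2}\overline{\bS}\bS^{-1/2}\|^{1/2}/\tau$ is correct and matches the paper's. The gap is in the strong convexity step. When you bound the indicator by Markov's inequality, $\mathbf{1}\{|\nu-s\bZ^\T\bDelta|>\tau\}\leq (\nu-s\bZ^\T\bDelta)^2/\tau^2$, expand the square, and kill the cross term by independence, the leading contribution to the deficit $\bS^{-1/2}(\bS-\bH)\bS^{-1/2}$ in direction $\bu$ is $\tau^{-2}\e\{\nu^2(\bu^\T\bzeta)^2\} = \tau^{-2}\e\{\sigma^2(\bX)(\bu^\T\bzeta)^2\} = \tau^{-2}\,\bu^\T\bS^{-1/2}\overline{\bS}\bS^{-1/2}\bu$, where $\bzeta=\bS^{-1/2}\bZ$. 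This is \emph{not} of order $K_1^2\sigma^2/\tau^2$: Condition~2.1 assumes only $\e\{\sigma^2(\bX)\}=\sigma^2<\infty$ and places no constraint on how $\sigma^2(\bX)$ correlates with $(\bu^\T\bzeta)^2$ (e.g., take $\sigma^2(\bx)\propto \mathbf{1}(|\bu_0^\T\bzeta|>t)$ to make $\|\bS^{-1/2}\overline{\bS}\bS^{-1/2}\|/\sigma^2\gtrsim t^2$ arbitrarily large while $K_1$ stays fixed). Consequently $\tau\geq 8K_1^2\sigma$ does not force your deficit below $1/2$; your argument implicitly needs the extra hypothesis $\tau\gtrsim \|\bS^{-1/2}\overline{\bS}\bS^{-1/2}\|^{1/2}$, which is not part of the proposition. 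The same uncontrolled term reappears in your closing localization argument, since $\ell''_\tau(u)\geq 1-u^2/\tau^2$ produces exactly the same $\nu^2$-weighted quadratic form, so the self-referentiality fix does not rescue the proof.

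The missing idea — and the crux of the paper's proof — is how to bound the "bad event" probability without Markov at the perturbed point. The paper keeps the indicator intact and applies Cauchy--Schwarz to decouple it from the direction: $\e\{\mathbf{1}(|\wt\nu_i|>\tau)(\bZ_i^\T\bdelta)^2\}\leq \mathbb{P}(|\wt\nu_i|>\tau)^{1/2}\{\e(\bZ_i^\T\bdelta)^4\}^{1/2}$, paying only the sub-Gaussian fourth moment $\e(\bZ_i^\T\bdelta)^4\leq 16K_1^4\|\bS^{1/2}\bdelta\|^4$. The scalar tail probability at the intermediate point $\wt\btheta_\tau$ is then bounded by $\sigma^2/\tau^2$ via a convexity/function-value comparison rather than a moment of $\wt\nu_i$: since $h(\btheta)=\e\ell_\tau(Y-\bZ^\T\btheta)$ is convex and minimized at $\btheta^*_\tau$, one has $h(\wt\btheta_\tau)\leq h(\btheta^*)\leq \e\{(Y-\bZ^\T\btheta^*)^2\}/2=\sigma^2/2$, and combining this with the pointwise lower bound $\ell_\tau(u)\geq(\tau|u|-\tau^2/2)\mathbf{1}(|u|>\tau)$ yields $\mathbb{P}(|\wt\nu_i|>\tau)\leq\sigma^2/\tau^2$ — a bound depending only on $\sigma$, with no $\overline{\bS}$ and no self-reference. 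This gives a relative Hessian deficit of $4K_1^2\sigma/\tau\leq 1/2$ under exactly the stated condition $\tau\geq 8K_1^2\sigma$, which is what your Markov-based route cannot deliver.
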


\begin{proof}

Define functions $h(\btheta)=  n^{-1} \sn  \e \ell_\tau( Y_i  -  \bZ_i^\T \btheta)$ and $h_0(\btheta)= (2n)^{-1} \sn  \e  ( Y_i - \bZ_i^\T \btheta )^2 $ for $\btheta\in \bbr^{d+1}$, and put $\bdelta=\btheta^* - \btheta^*_\tau$. By the optimality of $\btheta^*_\tau$ and the mean value theorem, we have $\nabla h( \btheta^*_\tau)= \mo $ and 
\begin{align}
	\bdelta^\T \nabla^2 h( \wt \btheta^*_\tau) \bdelta & =  \big\langle \nabla h( \btheta^* ) - \nabla h( \btheta^*_\tau ) , \bdelta  \big\rangle  \nn \\
	&  = \big\langle  \nabla h( \btheta^* )  ,  \bdelta \big\rangle  =  -\frac{1}{n} \sn \e  \{ \ell_\tau'(\nu_i)   \bZ_i^\T \bdelta  \} , \label{FOC.ineq}
\end{align}
where $\wt{\btheta}_\tau =\lambda \btheta^* + (1-\lambda) \btheta^*_\tau$ for some $0\leq \lambda \leq 1$ and $\nu_i = \sigma(\bX_i) \varepsilon_i$. Moreover, write
$$
	\overline{\bZ} = \sigma(\bX) \bZ, \quad \overline{\bZ}_i = \sigma(\bX_i) \bZ_i ,  \, i=1\, \ldots, n, ~\mbox{ and }~ \overline{\bS} = \e (\overline{\bZ} \,\overline{\bZ}^\T ).
$$
 For the right-hand side of \eqref{FOC.ineq}, since $\e( \nu_i | \bZ_i ) = 0$, we have $- \e \{ \ell_\tau'( \nu_i) | \bZ_i \} = \e  [ \{  \nu_i  1(| \nu_i| > \tau) - \tau 1(\nu_i >\tau) + \tau 1 (\nu_i <-\tau)  \} | \bZ_i  ]$. This, together with H\"older's inequality, implies
\begin{align}
	& \frac{1}{n} \sn | \e \{ \ell_\tau'( \nu_i)   \bZ_i^\T \bdelta \} |   \leq   \frac{1}{n \tau} \sn \e | \nu_i^2 \bZ_i^\T \bdelta |   \nn \\
	& \leq  \frac{1}{n \tau} \sn \{ \e \sigma^2(\bX_i) \}^{1/2} \{\e (\overline \bZ_i^\T \bdelta)^2 \}^{1/2}   \leq     \| \overline \bS^{1/2} \bdelta \|   \frac{\sigma }{\tau} . \label{RHS.ubd}
\end{align}

Next, we deal with the left-hand side of \eqref{FOC.ineq}. Since $h$ is a convex function minimized at $\btheta^*_\tau$, $h(\wt{\btheta}_\tau)  \leq \lambda h(\btheta^*) +(1-\lambda) h(\btheta^*_\tau) \leq  h(\btheta^*) \leq  h_0(\btheta^*) = \sigma^2/2$. On the other hand, note that $h(\btheta) \geq n^{-1} \sn  \e \{  (\tau | Y_i - \bZ_i^\T\btheta| - \tau^2/2 ) 1(| Y_i  - \bZ_i^\T \btheta |>\tau)\}$ for all $\btheta \in \bbr^{d+1}$. Define $\wt \nu_i = Y_i - \bZ_i^\T \wt{\btheta}_\tau$ for $i=1,\ldots, n$. Combining these upper and lower bounds on $h(\wt \btheta^*_\tau)$ with Markov's inequality gives
\begin{align}
	\frac{\tau}{n} \sn \e  \big\{ | \wt \nu_i   |  1\big( | \wt \nu_i  |>\tau \big) \big\}  &  \leq \frac{\tau^2}{2n}  \sn  \mathbb{P} \big( | \wt \nu_i  |>\tau \big) +  \frac{ \sigma^2}{2}  \nn \\
	& \leq  \frac{\tau}{2n} \sn  \e  \big\{ | \wt \nu_i   |  1\big( |  \wt \nu_i | > \tau \big) \big\}  +  \frac{ \sigma^2}{2}  , \nn
\end{align}
which further implies
\begin{align}
	 \frac{1}{n} \sn \mathbb{P} \big( | \wt \nu_i | >\tau \big) \leq \frac{1}{n \tau} \sn \e  \big\{ | \wt \nu_i | 1\big( | \wt \nu_i |>\tau\big) \big\} \leq  \frac{\sigma^2}{\tau^2} . \label{lem1_p3}
\end{align}

Moreover, note that $\nabla^2 h( \wt \btheta^*_\tau) = \bS - n^{-1}\sn \e  \{ 1 (| \wt \nu_i | > \tau ) \bZ_i \bZ_i^\T \}$. Then, it follows from the Cauchy-Schwartz inequality and \eqref{lem1_p3} that
\begin{align}
	& 	\bdelta^\T \nabla^2 h( \wt \btheta^*_\tau) \bdelta   = \| \bS^{1/2} \bdelta \|^2 - \frac{1}{n} \sn   \e  \big\{ 1 \big( | \wt \nu_i | > \tau \big) (\bZ_i^\T \bdelta)^2 \big\}  \nn \\
		& \geq  \| \bS^{1/2} \bdelta \|^2 - \frac{1}{n} \sn \mathbb{P}\big( | \wt \nu_i | > \tau  \big)^{1/2} \big\{ \e \big(\bZ_i^\T \bdelta \big)^4 \big\}^{1/2} \nn \\
	& \geq  \| \bS^{1/2} \bdelta \|^2 -  \bigg\{  \frac{1}{n}  \sn \mathbb{P}\big( | \wt \nu_i | > \tau  \big) \bigg\}^{1/2} \bigg\{ \frac{1}{n}  \sn  \e (\bZ_i^\T \bdelta)^4 \bigg\}^{1/2} \nn \\ 
	& \geq  \| \bS^{1/2} \bdelta \|^2 - \frac{\sigma}{\tau}\bigg\{ \frac{1}{n}  \sn  \e (\bZ_i^\T \bdelta)^4 \bigg\}^{1/2} . \nn
\end{align}
Let $\bzeta = \bS^{-1/2} \bZ$. Under Condition~\ref{cond2.1}, $\bzeta$ is a sub-Gaussian random vector satisfying $\| \bzeta \|_{\psi_2} \leq K_1= (K_0^2 + 1)^{1/2}$ and therefore $\e (\bu^\T \bzeta)^4 \leq 16 K_1^4$ for all $\bu \in \mathbb{S}^d$. The preceding inequality can thus be further bounded from below by
\begin{align*}
	      \big(  1 - 4 K_1^2 \tau^{-1} \sigma \big)  \| \bS^{1/2} \bdelta \|^2    \geq \frac{1}{2 }\| \bS^{1/2} \bdelta \|^2 ,
\end{align*}
provided that $\tau \geq 8K_1^2 \,\sigma$.  This, together with \eqref{FOC.ineq} and \eqref{RHS.ubd}, proves \eqref{lem1.1}.
\end{proof}

Proposition~\ref{RAlem2} shows that the differences between the first two moments of $ \ell'_\tau(\nu )$ and $\nu$ vanish faster if higher moments of $\nu$ exist.

\begin{proposition}  \label{RAlem2}
Let $\nu$ be a real-valued random variable with $\e( \nu ) = 0$ and $\sigma^2 = \e (\nu^2)>0$. Assume that $v_\kappa = \e ( | \nu |^{\kappa} ) <\infty$ for some $\kappa > 2$. Then
\begin{align}
	|\e \ell'_\tau( \nu ) | \leq \min \bigg(  \frac{\sigma^2}{\tau} , \frac{ v_\kappa }{ \tau^{ \kappa -1} } \bigg)  ~\mbox{ and }~  | \e \{ \ell'_\tau( \nu )  \}^2 - \sigma^2 | \leq  \frac{2 v_\kappa  }{ ( \kappa -2 )  \tau^{\kappa -2 } } . \label{var.error}
\end{align}
\end{proposition}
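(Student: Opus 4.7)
The plan is to exploit the explicit form of $\ell'_\tau$, namely $\ell'_\tau(u)=u$ for $|u|\le\tau$ and $\ell'_\tau(u)=\tau\,\mathrm{sgn}(u)$ for $|u|>\tau$, so that truncation effects can be controlled via moment inequalities. I would begin with the first-moment bound. Using $\E(\nu)=0$, I decompose
\[
\E\ell'_\tau(\nu)=\E\{\nu\,\mathbf{1}(|\nu|\le\tau)\}+\tau\,\E\{\mathrm{sgn}(\nu)\,\mathbf{1}(|\nu|>\tau)\}=\E\bigl\{(\tau\,\mathrm{sgn}(\nu)-\nu)\,\mathbf{1}(|\nu|>\tau)\bigr\}.
\]
On $\{|\nu|>\tau\}$, $\tau\,\mathrm{sgn}(\nu)-\nu$ has the opposite sign of $\nu$ and magnitude $|\nu|-\tau$, so $|\E\ell'_\tau(\nu)|\le\E\{(|\nu|-\tau)\,\mathbf{1}(|\nu|>\tau)\}$. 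Two elementary inequalities then finish the job: on $\{|\nu|>\tau\}$ one has $|\nu|-\tau\le \nu^2/\tau$, which yields the bound $\sigma^2/\tau$; alternatively $(|\nu|-\tau)\mathbf{1}(|\nu|>\tau)\le|\nu|\cdot(|\nu|/\tau)^{\kappa-1}=|\nu|^{\kappa}/\tau^{\kappa-1}$, which yields the bound $v_\kappa/\tau^{\kappa-1}$.

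For the second moment, I use $\{\ell'_\tau(\nu)\}^2=\nu^2\mathbf{1}(|\nu|\le\tau)+\tau^2\mathbf{1}(|\nu|>\tau)$, so that
\[
\sigma^2-\E\{\ell'_\tau(\nu)\}^2=\E\bigl\{(\nu^2-\tau^2)\,\mathbf{1}(|\nu|>\tau)\bigr\}=\E\bigl\{(\nu^2-\tau^2)_+\bigr\}\ge 0.
\]
The key step is the tail-integration identity $\E X=\int_0^\infty\mathbb{P}(X>t)\,dt$ applied to $X=(\nu^2-\tau^2)_+$. The substitution $u=\sqrt{\tau^2+t}$ gives
\[
\E\bigl\{(\nu^2-\tau^2)_+\bigr\}=\int_\tau^\infty 2u\,\mathbb{P}(|\nu|>u)\,du.
\]
Applying Markov's inequality $\mathbb{P}(|\nu|>u)\le v_\kappa/u^\kappa$ and integrating (since $\kappa>2$) yields $\int_\tau^\infty 2u\cdot v_\kappa u^{-\kappa}\,du=2v_\kappa\tau^{2-\kappa}/(\kappa-2)$, which is exactly the desired bound.

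There is no real obstacle here; the whole argument is a direct computation once one recognizes that the two quantities to bound collapse onto the tail event $\{|\nu|>\tau\}$. The only minor subtlety is choosing the tail-integral representation for the second bound rather than the cruder Markov estimate $\E\{\nu^2\mathbf{1}(|\nu|>\tau)\}\le v_\kappa/\tau^{\kappa-2}$, since the latter misses the sharper constant $2/(\kappa-2)$ that appears in the statement and is convenient for large $\kappa$.
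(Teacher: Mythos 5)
Your proof is correct and follows essentially the same route as the paper's: both arguments use $\E(\nu)=0$ to collapse $\E\,\ell'_\tau(\nu)$ onto the tail event $\{|\nu|>\tau\}$ and bound $(|\nu|-\tau)$ pointwise by $\nu^2/\tau$ and $|\nu|^\kappa/\tau^{\kappa-1}$, and both reduce the variance discrepancy to $\E\{(\nu^2-\tau^2)_+\}=2\int_\tau^\infty y\,\mathbb{P}(|\nu|>y)\,dy$ (the paper via Fubini, you via the layer-cake formula with a substitution) before applying Markov's inequality. The two computations are interchangeable, so nothing further is needed.
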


\begin{proof}
Since $\e( \nu ) =0$, we have $\e \{ \ell_\tau'( \nu ) \}  = - \e \{ ( \nu - \tau ) 1 ( \nu > \tau) \} + \e \{ ( - \nu  - \tau ) 1( \nu < -\tau) \}$. Hence, for any $2\leq \iota \leq \kappa$, $|\e \ell_\tau'( \nu ) | \leq \e \{ | \nu | - \tau ) 1(| \nu |>\tau) \} \leq  \tau^{1-\iota} \,\e | \nu |^\iota$, which proves the first inequality in \eqref{var.error}. Next, letting $\eta = | \nu |$ be a nonnegative random variable, we have
\begin{align}
   \e\{ \eta^2 1(\eta > \tau ) \}   & = 2 \e \int_0^\infty  1(\eta>y) 1(\eta>\tau) y \, dy \nn \\
 & = 2 \mathbb{P}(\eta > \tau ) \int_0^\tau y\,dy + 2\int_\tau^\infty y \mathbb{P}(\eta>y) \, dy  \nn \\
 &  = \tau^2 \mathbb{P}(\eta>\tau) + 2 \int_\tau^\infty y \mathbb{P}(\eta > y) \, dy. \nn
\end{align}
By Markov's inequality, 
$$
 \int_\tau^\infty y \mathbb{P}(\eta > y) \, dy \leq \e (\eta^\kappa )  \int_\tau^\infty y^{1-\kappa} \, dy = (\kappa-2)^{-1}\tau^{2-\kappa}\, \e ( \eta^\kappa ) .
$$ 
This, together with the equality $\e \{ \ell'_\tau( \nu ) \}^2 = \e ( \nu^2)  - \{  \e \nu^2 1(| \nu |>\tau) - \tau^2 \mathbb{P}(| \nu |>\tau)\}$ proves the second inequality in \eqref{var.error}.
\end{proof}

The following lemma is borrowed from \cite{FLSZ2015}. It provides a localized analysis mechanism, which is a crucial element in the proof of Theorem~\ref{RAthm1}.

\begin{lemma}   \label{local.lemma}
For any positive integer $d\geq 1$ and convex loss function $L : \bbr^d \mapsto \bbr$, write $D_L(\bbeta_1,\bbeta_2)= L(\bbeta_1)- L(\bbeta_2)- \langle \nabla L(\bbeta_2),\bbeta_1-\bbeta_2  \rangle$ and $\overline D_L(\bbeta_1,\bbeta_2)=D_L(\bbeta_1,\bbeta_2)+D_L(\bbeta_2,\bbeta_1)$ for $\bbeta_1 , \bbeta_2 \in \bbr^d$. Then, $\overline D_L (\bbeta_\eta ,\bbeta^*)\leq \eta \overline D_L(\bbeta,\bbeta^*)$ for any $\bbeta_\eta  =\bbeta^*+ \eta (\bbeta-\bbeta^*)$ with $\eta \in (0,1]$.
\end{lemma}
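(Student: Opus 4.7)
The plan is to first rewrite the symmetric Bregman divergence $\overline D_L$ in a more workable form. Adding the two asymmetric divergences causes the function values $L(\bbeta_1)$ and $L(\bbeta_2)$ to cancel, leaving the clean identity
$$
\overline D_L(\bbeta_1,\bbeta_2) = \langle \nabla L(\bbeta_1) - \nabla L(\bbeta_2),\, \bbeta_1 - \bbeta_2 \rangle .
$$
This is the only algebraic manipulation of substance in the proof.

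With this identity in hand, I would use $\bbeta_\eta - \bbeta^* = \eta(\bbeta - \bbeta^*)$ to write the left-hand side of the claim as
$$
\overline D_L(\bbeta_\eta, \bbeta^*) = \eta\, \langle \nabla L(\bbeta_\eta) - \nabla L(\bbeta^*),\, \bbeta - \bbeta^* \rangle,
$$
and the right-hand side as $\eta\, \overline D_L(\bbeta,\bbeta^*) = \eta\, \langle \nabla L(\bbeta) - \nabla L(\bbeta^*),\, \bbeta - \bbeta^* \rangle$. Subtracting, the claim collapses to
$$
\langle \nabla L(\bbeta_\eta) - \nabla L(\bbeta),\, \bbeta - \bbeta^* \rangle \leq 0 .
$$

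The case $\eta = 1$ is immediate since $\bbeta_\eta = \bbeta$. For $\eta \in (0,1)$, I would use $\bbeta - \bbeta^* = (1-\eta)^{-1}(\bbeta - \bbeta_\eta)$ to rewrite the inequality as
$$
\langle \nabla L(\bbeta) - \nabla L(\bbeta_\eta),\, \bbeta - \bbeta_\eta \rangle \geq 0,
$$
which is exactly the monotonicity of the gradient of a convex function. This, in turn, follows instantly by summing the two first-order convexity inequalities $L(\bbeta) \geq L(\bbeta_\eta) + \langle \nabla L(\bbeta_\eta), \bbeta - \bbeta_\eta \rangle$ and $L(\bbeta_\eta) \geq L(\bbeta) + \langle \nabla L(\bbeta), \bbeta_\eta - \bbeta \rangle$. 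Since the argument reduces entirely to a short algebraic rewriting followed by a standard convexity fact, I do not anticipate any genuine obstacle; the only point requiring a bit of care is recognizing that the interpolation $\bbeta_\eta = \bbeta^* + \eta(\bbeta - \bbeta^*)$ produces two proportional difference vectors, which is what allows the factor $\eta$ to be pulled out cleanly.
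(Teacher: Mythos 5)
Your proof is correct: the identity $\overline D_L(\bbeta_1,\bbeta_2)=\langle \nabla L(\bbeta_1)-\nabla L(\bbeta_2),\bbeta_1-\bbeta_2\rangle$, the factoring out of $\eta$ using $\bbeta_\eta-\bbeta^*=\eta(\bbeta-\bbeta^*)$, and the reduction to gradient monotonicity via $\bbeta-\bbeta^*=(1-\eta)^{-1}(\bbeta-\bbeta_\eta)$ are all valid, and monotonicity of $\nabla L$ for a differentiable convex $L$ closes the argument. Note that the paper itself gives no proof of this lemma --- it is stated as borrowed from Fan et al.\ (2015) --- so your argument serves as a self-contained verification, and it is essentially the standard one for this localization fact.
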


\subsection{Proof of Theorem~\ref{RAthm1}}

Throughout, let $c_0, c_1, c_2, \ldots $ be positive constants depending only on $\tau_0$, $K_0$ and $\sigma^2$. Moreover, we write 
$$
	\nu_i = \sigma(\bX_i) \varepsilon_i, \quad \bZ_i = (1, \bX_i^\T)^\T ~\mbox{ and }~ \overline{\bZ}_i = \sigma(\bX_i) \bZ_i ~\mbox{ for }  i=1,\ldots, n.
$$

To begin with, we define an intermediate estimate $\hat{\btheta}_\eta = \btheta^* + \eta (\hat{\btheta} - \btheta^*)$ such that $\| \bS^{1/2} ( \hat{\btheta}_\eta  - \btheta^* ) \| \leq r$ for some $r>0$ to be specified. If $\| \bS^{1/2} ( \hat{\btheta} - \btheta^* ) \| \leq r$, we take $\eta =1$; otherwise, there always exists some $\eta \in (0,1)$ such that $\| \bS^{1/2}(  \hat{\btheta}_\eta - \btheta^* )\| = r$. Then, applying Lemma~\ref{local.lemma} to the loss function $L(\btheta) = n^{-1} \sn \ell_\tau(Y_i - \bZ_i^\T \btheta )$ gives
\begin{align}
	\big\langle \nabla L(\hat{\btheta}_\eta) - \nabla L(\btheta^*)  , \hat{\btheta}_\eta - \btheta^* \big\rangle &  \leq \eta \big\langle   \nabla L(\hat{\btheta}  ) - \nabla L(\btheta^*)  , \hat{\btheta} - \btheta^* \big\rangle \nn \\
	&  =  -\eta \big\langle   \nabla L(\btheta^*)  , \hat{\btheta} - \btheta^* \big\rangle , \label{FOC.cond}
\end{align}
where the last step follows from the first order condition that $ \nabla L(\hat{\btheta}  ) =0$. By the mean value theorem, $\nabla L(\hat{\btheta}_\eta) - \nabla L(\btheta^*)  = \nabla^2 L(\wt \btheta_\eta )(\hat{\btheta}_\eta - \btheta^*)$, where $\wt \btheta_\eta$ is a convex combination of $\btheta^*$ and $\hat{\btheta}_\eta$ and thus satisfies $\| \bS^{1/2}( \wt \btheta_\eta - \btheta^* ) \|\leq r$. Together with \eqref{FOC.cond}, this indicates that
\begin{align}
	\min_{\btheta: \| \bS^{1/2} ( \btheta  -\btheta^*) \|\leq r} \lambda_{\min} \big(\bS^{-1/2} \nabla^2 L( \btheta ) \bS^{-1/2}\big) \cdot  \| \bS^{1/2} ( \hat{\btheta}_\eta - \btheta^* ) \| \leq  \|   \bS^{-1/2} \nabla L(\btheta^*) \|.  \label{general.bound}
\end{align}

To bound the left-hand side of \eqref{general.bound} from below, note that 
$$
	\bS^{-1/2} \nabla^2 L(\btheta) \bS^{-1/2} = \frac{1}{n} \sn  1\big(  | Y_i - \bZ_i^\T \btheta | \leq \tau \big) \bzeta_i \bzeta_i^\T,
$$
where ${\bzeta}_i = (1, \zeta_{i1 }, \ldots, \zeta_{id})^\T = \bS^{-1/2} \bZ_i$. Using the inequality that $1 (| Y_i - \bZ_i^\T \btheta | \leq  \tau  )  \geq   1(  |\nu_i | + \| \bzeta_i \| r  \leq \tau)$, we have, for any $\bu \in \mathbb{S}^{d}$,
\begin{align}
 \bu^\T  \bS^{-1/2} \nabla^2 L(\btheta) \bS^{-1/2} \bu   \geq  \frac{1}{n} \sn (  \bu^\T \bzeta_i )^2 1\big( | \nu_i | + \| \bzeta_i \| r \leq \tau \big)  = \bu^\T  \frac{ \bA^\T \bA  }{n} \bu  ,  \nn
\end{align} 
where $\bA = (\ba_1, \ldots, \ba_n)^\T $ with $\ba_i = \bzeta_i 1(  | \nu_i | + \| \bzeta_i \| r  \leq \tau )$. For any $w>0$, applying Remark~5.40 in \cite{V2012} to $\bA$ yields that, with probability greater than $1-2e^{-w}$,
\begin{align}
	  \bigg\|  \frac{1}{n} \bA^\T \bA  -  \frac{1}{n}\sn \mathbb{E} \big\{ 1\big( | \nu_i | + \| \bzeta_i \| r  \leq \tau \big) \bzeta_i \bzeta_i^\T \big\}  \bigg\| \leq \max(\rho, \rho^2)   \nn 
\end{align}
with $\rho = c_0 \sqrt{(d+1+w)/n}$. Observe that $n^{-1}\sn \mathbb{E} \{ 1( | \nu_i | + \| \bzeta_i \| r  \leq \tau ) \bzeta_i \bzeta_i^\T \} =\bI_{d+1} - n^{-1}\sn \mathbb{E} \{ 1( | \nu_i | + \| \bzeta_i \| r  > \tau ) \bzeta_i \bzeta_i^\T\}$. Since $\bzeta$ is a sub-Gaussian random vector with $\| \bzeta \|_{\psi_2} \leq K_1 = (K_0^2+1)^{1/2}$, $\mathbb{E} ( \bu^\T \bzeta)^4 \leq 16 K_1^4$ for all $\bu\in\mathbb{S}^d$. For any $\bu \in \mathbb{S}^d$,
\begin{align}
  & \frac{1}{n}\sn \mathbb{E} \big\{ 1\big( | \nu_i | + \| \bzeta_i \| r > \tau \big) ( \bu^\T \bzeta_i )^2 \big\}  \nn \\
  & \leq  \frac{1}{n \tau  }\sn \mathbb{E} \big\{ \big( | \nu_i| + \|\bzeta_i \| r \big) ( \bu^\T \bzeta_i )^2 \big\}  \nn \\
  &  \leq  \frac{1}{n \tau } \sn   \{  \mathbb{E} (  \bu^\T \bzeta_i  )^4 \}^{1/2} \{ \sigma  +  (d+1)^{1/2} r \} . \nn
\end{align}
It then follows that with probability greater than $1-2 e^{-w }$, 
$$
	\lambda_{\min}\big( n^{-1} \bA^\T \bA \big)  \geq 1- 4 K_1^2  \tau^{-1} \big\{ \sigma +  (d+1)^{1/2} r \big\} - \max(\rho, \rho^2) .
$$
Putting the above calculations together, we conclude that with probability at least $1- 2e^{-w}$,
\bee
   \min_{\btheta: \| \bS^{1/2} ( \btheta  -\btheta^*) \|\leq r} \lambda_{\min} \big(\bS^{-1/2} \nabla^2 L( \btheta ) \bS^{-1/2}\big) \geq \frac{1}{4}  \label{min.eigenvalue.bound}
\eee
as long as $n \geq 16 c_0^2(d+1+w)$ and $\tau \geq 8K_1^2 \{ \sigma +  (d+1)^{1/2} r\}$.

Next we bound the quadratic form $ \|   \bS^{-1/2} \nabla L(\btheta^*) \|$. Define
$$
	\bxi = - \sqrt{n}  \, \bS^{-1/2} \big\{ \nabla L(\btheta^*) - \e \nabla L( \btheta^* ) \big\}  = \frac{1}{\sqrt{n}} \sn  \bzeta_i^* ,
$$
where $\bzeta_i^* = \ell'_\tau( \nu_i ) \bzeta_i - \e \{ \ell'_\tau( \nu_i) \bzeta_i\}$. Throughout the following, we write $\bDelta = \bS^{-1/2} \overline{\bS} \bS^{-1/2} $ with $\overline{\bS} = \e (\overline{\bZ}\, \overline{\bZ}^\T)$. For any $\bu  \in \mathbb{S}^d$, note that
$$
 \big| \e \big\{ \ell'_\tau(  \nu_i) \bu^\T \bzeta_i \big\} \big| \leq \tau^{-1} \e \big\{ \sigma^2(\bX_i ) |\bu^\T \bzeta_i| \big\} \leq  \| \bDelta \|^{1/2} \tau^{-1}  \sigma   .
$$
For any $\lambda \in \bbr$, using inequalities $e^t \leq 1 + t + t^2 e^{t \vee 0}/2$ and $1+t \leq e^t$ gives
\begin{align}
	& \e \exp(\lambda \bu^\T \bxi) =\prod_{i=1}^n \e \exp(  \lambda n^{-1/2}  \bu^\T \bzeta^*_i  ) \nn \\
	&\leq \prod_{i=1}^n    \bigg\{ 1   + \frac{\lambda^2}{2n} \e ( \bu^\T \bzeta^*_i )^2 e^{ \frac{ |\lambda |  }{\sqrt{n}}|\bu^\T \bzeta^*_i| }  \bigg\} \nn \\
	& \leq  \prod_{i=1}^n    \bigg[ 1   + \frac{\lambda^2}{  n }e^{ \| \bDelta \|^{1/2} \frac{ |\lambda | \sigma  }{\sqrt{n} \tau }   }   
\e \big\{	 \nu_i^2  (\bu^\T \bzeta_i )^2  +  \| \bDelta \|    \tau^{-2} \sigma^2 \big\}   e^{ \frac{ |\lambda | \tau}{\sqrt{n}}|\bu^\T \bzeta_i| }  \bigg] \nn \\
	& \leq  \prod_{i=1}^n    \bigg[ 1   + \frac{\lambda^2 } { n}  e^{  \| \bDelta \|^{1/2} \frac{ |\lambda |  \sigma }{\sqrt{n} \tau }  }   \big\{    \| \bDelta \| \tau^{-2}  \sigma^2\, \e e^{ \frac{ |\lambda | \tau}{\sqrt{n}}|\bu^\T \bzeta_i| }  +    \e (\bu^{\intercal}  \overline \bzeta_i)^2  e^{ \frac{|\lambda | \tau}{\sqrt{n}}|\bu^\T \bzeta_i| }   \big\} \bigg],  \nn
\end{align}
where $\overline{\bzeta}_i = \sigma(\bX_i) \bzeta_i$ and $\bDelta = \e(\overline{\bzeta}_i  \overline{\bzeta}_i^\T )  $. Recall that $\tau = \tau_0 \sqrt{n/(d+1+w)}$ for $\tau_0 \geq \sigma$. In addition, if $|\lambda| \leq \sqrt{2(d+1+w)} $, we have 
$$
  \frac{|\lambda | \tau}{\sqrt{n}} \leq  \sqrt{2}\tau_0 ~\mbox{ and }~	 \e \exp(\lambda \bu^\T \bxi) \leq \exp( \nu_0^2 \lambda^2/2),
$$
where $\nu_0 \geq 1$ is a constant depending on $\tau_0$, $K_0$ and $\| \bDelta \|$. Hence, condition (1.18) in the supplement of \cite{S2012} holds with $V_0 = \bI_{d+1}$ and $\mathbf{g} = \sqrt{2(d+1+w)}$. Moreover, put $D_0 = \bI_{d+1} $ so that $D_0^{-1} V_0^2 D_0^{-1} = \bI_{d+1}$. Applying Corollary~1.13 there implies that for any $\sqrt{2(d+1)} / 18 < x \leq x_c$,
\begin{align}
	 \mathbb{P} \big\{  \| \bxi \|^2  >   \nu_0 (d+1 + 6x)  \big\} \leq 2e^{-x} + 8.4 e^{ - x_c } , \nn 
\end{align}
where $x_c = (1-  0.5 \log 3 )(d+1) + 1.5w \geq  0.45 (d+1) + 1.5 w$. In particular, taking $x=(d+1)/3 + w$ in the preceding inequality we obtain that, with probability greater than $1- 5 e^{-w}$,
\begin{align}
 \| \bxi \|^2  \leq 3\nu_0 (d+1 + 2w) . \nn
\end{align}
For $\bS^{-1/2} \e \nabla L(\btheta^*) = - n^{-1} \sn \e\{ \ell'_\tau( \nu_i) \bzeta_i \}$, note that
\begin{align}
	\|  \bS^{-1/2} \e \nabla L(\btheta^*) \|  =  \sup_{\bu \in \mathbb{S}^d} \frac{1}{n} \sn \e  \{ \ell_\tau'(\nu_i) \bu^\T \bzeta_i \}    \leq  \| \bDelta \|^{1/2}  \frac{ \sigma }{\tau}  .  \nn
\end{align}
Together, the last two displays imply that, with probability at least $1- 5 e^{-w}$,
\begin{align}
  \big\|  \bS^{-1/2} \nabla L(\btheta^*) \big\| \leq r_0 :=  \| \bDelta \|^{1/2}\frac{ \sigma }{\tau} +  \sqrt{\frac{3\nu_0 (d+1 + 2w)}{n}}   .  \label{quadratic.bound}
\end{align}

Taking $r_1 = 4.1 r_0$, then it follows from \eqref{general.bound}--\eqref{quadratic.bound} that with probability at least $1-7e^{-w}$, $\| \bS^{1/2} ( \hat{\btheta}_\eta - \btheta^* ) \| \leq 4 r_0 < r_1$ whenever $n \geq c_1(d+w)^{3/2}$. By the definition of $\hat{\btheta}_\eta$ in the beginning of the proof, we must have $\eta=1$ and thus \eqref{concentration.MLE} follows.

Next we prove \eqref{Bahadur.representation}. Based on the above analysis, we only need to focus on a local vicinity of $\btheta^*$, and therefore a variant of Proposition~3.1 in \cite{S2013} can be applied. To this end, we need to check Conditions~($\mathcal{L}_0$) and ($ED_2$) there. For $\btheta \in \bbr^{d+1}$, define the matrix 
$$
	\bD^2(\btheta) = \nabla^2 \e L(\btheta) = \bS - \bS^{1/2} \frac{1}{n}\sn \e \big\{ 1\big(| Y_i - \bZ_i^\T \btheta | > \tau \big) \bzeta_i \bzeta_i^\T \big\} \bS^{1/2} .
$$
Define the parameter set $\Theta_0(r)  = \{ \btheta \in \bbr^{d+1} : \|  \bS^{1/2}( \btheta - \btheta^* ) \| \leq r  \}$ for $r>0$. By \eqref{concentration.MLE}, we see that $\hat{\btheta} \in  \Theta_0(4r_0)$ with probability greater than $1-7e^{-w}$ for $r_0$ given in \eqref{quadratic.bound}.

\medskip
\noindent
{\sc Condition ($\mathcal{L}_0$):}
For every $\btheta\in \Theta_0(r)$ and $\bu \in \mathbb{S}^d$, put $\bdelta = \bS^{1/2}( \btheta - \btheta^*)$ such that $\| \bdelta \| \leq r$, we have
\begin{align}
	&  \big|   \bu^\T \big\{ \bS^{-1/2} \bD^2(\btheta) \bS^{-1/2}  - \bI_{d+1} \big\} \bu  \big|  \nn\\
	&   \leq  \frac{1}{n} \sn \e \big\{ 1\big(| Y_i - \bZ_i^\T \btheta | > \tau \big) ( \bu^\T \bzeta_i )^2 \big\}   \nn \\
	& \leq  \frac{1}{n \tau^2} \sn \e \big\{   \nu_i^2 + (   \bdelta^\T \bzeta_i   )^2 \big\}  ( \bu^\T \bzeta_i )^2    \leq   \tau^{-2}  \big(  \| \bDelta \| + 16  K_1^4 r^2  \big) . \nn
\end{align}
This verifies Condition~$(\mathcal{L}_0)$ by taking
\bee  \label{def.delta}
	\delta(r) =  \tau^{-2}  \big(  \| \bDelta \| + 16  K_1^4 r^2  \big)   , \ \ r > 0.
\eee

\noindent
{\sc Condition ($ED_2$):} Set $\bzeta(\btheta) =  L(\btheta ) - \e L(\btheta)$, such that
$$
	\nabla^2 \bzeta(\btheta)  =  \frac{1}{n} \sn 1\big( |Y_i  -\bZ_i^\T \btheta | \leq \tau \big) \bZ_i \bZ_i^\T - \e \big\{ 1\big( |Y_i  -\bZ_i^\T \btheta | \leq \tau \big) \bZ_i \bZ_i^\T \big\} . 
$$
Following the same arguments as before, for every $\bgamma_1 , \bgamma_2 \in \bbr^{d+1}$ and $\lambda \in \bbr$, put $\wt \bgamma_1 = \bS^{1/2}\bgamma_1 / \| \bS^{1/2} \bgamma_1 \|, \wt \bgamma_2 = \bS^{1/2}\bgamma_2 / \| \bS^{1/2} \bgamma_2 \| $ we have
\begin{align}
	& \e \exp \bigg\{  \frac{\lambda}{n^{-1/2}} \frac{  \bgamma_1^\T  \nabla^2 \zeta(\btheta) \bgamma_2 }{ \| \bS^{1/2} \bgamma_1 \|   \| \bS^{1/2}  \bgamma_2 \| }  \bigg\} \nn \\
	& \leq \prod_{i=1}^n  \bigg( 1 +  \frac{\lambda^2 }{ n } \e \bigg[ \big\{
  ( \wt \bgamma_1^\T \bzeta_i  \wt \bgamma_2^\T \bzeta_i )^2   + \big( \e | \wt \bgamma_1^\T \bzeta_i  \wt \bgamma_2^\T \bzeta_i| \big)^2 \big\}  e^{    \frac{|\lambda|}{\sqrt{n}} \big( |\wt \bgamma_1^\T \bzeta_i  \wt \bgamma_2^\T \bzeta_i | + \e  | \wt \bgamma_1^\T \bzeta_i \wt \bgamma_2^\T \bzeta_i | \big)  }  \bigg]  \bigg) \nn \\
	& \leq  \prod_{i=1}^n  \bigg[ 1 +    e^{\frac{| \lambda | }{\sqrt{n}}}  \frac{\lambda^2}{n}   \e  \big( e^{\frac{ | \lambda| }{\sqrt{n}} |\wt \bgamma_1^\T \bzeta_i  \wt \bgamma_2^\T \bzeta_i | } \big) +    e^{\frac{| \lambda |}{\sqrt{n}}}  \frac{\lambda^2}{n} \e  \big\{    ( \wt \bgamma_1^\T \bzeta_i )^2 ( \wt \bgamma_2^\T \bzeta_i )^2  e^{\frac{ | \lambda | }{\sqrt{n}} |\wt \bgamma_1^\T \bzeta_i  \wt \bgamma_2^\T \bzeta_i | } \big\}  \bigg]  \nn \\
	& \leq  \prod_{i=1}^n \bigg[ 1 +  e^{\frac{ | \lambda| }{\sqrt{n}}}  \frac{\lambda^2}{n}    \max_{ \bu \in \mathbb{S}^d  } \e \big\{  e^{ \frac{ | \lambda| }{\sqrt{n}} (\bu^\T \bzeta )^2  } \big\}+  e^{\frac{| \lambda| }{\sqrt{n}}}  \frac{\lambda^2}{n}    \max_{ \bu \in \mathbb{S}^d  } \e \big\{  ( \bu^\T \bzeta )^4 e^{ \frac{| \lambda | }{\sqrt{n}} (\bu^\T \bzeta )^2  } \big\} \bigg]   \nn \\
	& \leq \exp\bigg[  e^{\frac{ | \lambda| }{\sqrt{n}}}   \lambda^2   \max_{ \bu \in \mathbb{S}^d  } \e \big\{  e^{ \frac{ | \lambda | }{\sqrt{n}} (\bu^\T \bzeta )^2  } \big\} +   e^{\frac{ | \lambda| }{\sqrt{n}}}  \lambda^2   \max_{ \bu \in \mathbb{S}^d  } \e \big\{  ( \bu^\T \bzeta )^4 e^{ \frac{ | \lambda | }{\sqrt{n}} (\bu^\T \bzeta )^2  } \big\} \bigg]. \nn
\end{align}
Under Condition~\ref{cond2.1}, there exists some constant $c_2>0$ depending only on $K_0$ such that, for all $|\lambda | \leq c_2 \sqrt{n}$ and $\btheta \in \bbr^{d+1}$, 
\begin{align}
  \sup_{ \bgamma_1, \bgamma_2 \in \bbr^{d+1}} \e \exp \bigg\{  \frac{\lambda}{n^{-1/2}} \frac{  \bgamma_1^\T  \nabla^2 \zeta(\btheta) \bgamma_2 }{ \| \bS^{1/2}  \bgamma_1 \|   \| \bS^{1/2}  \bgamma_2 \| }  \bigg\} \leq   \exp ( \wt \nu_0^2 \lambda^2 /2 ),	\nn
\end{align}
where $\wt \nu_0 >0$ is a constant depending only on $K_0$. This verifies Condition~($ED_2$) by taking $\omega = n^{-1/2}$, $\nu_0 = \wt \nu_0$ and $\mathbf{g}(r) = c_2\sqrt{n}$ for all $r>0$. Then, applying Proposition~3.1 in \cite{S2013} with $D_0$ replaced by $\bS^{1/2}$ yields that, as long as $n \geq  c_2^{-2}\{ 4(d+1)+2w\}$,
\begin{align}
	\Delta(r) := \sup_{\btheta \in \Theta_0(r)} \big\| \bS^{1/2} (\btheta - \btheta^*) - & \, \bS^{-1/2} \{ \nabla L(\btheta) - \nabla L(\btheta^*) \}   \big\| \nn \\
	&  \leq   \delta(r) r  + 6 \wt \nu_0 \{ 2w + 4(d+1)  \}^{1/2}  n^{-1/2} r \nn
\end{align}
with probability greater than $1-e^{-w}$, where $\delta(r)$ is as in \eqref{def.delta}. This, together with \eqref{concentration.MLE} and the fact $\nabla L(\hat{\btheta})=0$, proves \eqref{Bahadur.representation} by taking $r= 4r_0$. The proof of Theorem~\ref{RAthm1} is then complete.  \qed

\subsection{Proof of Theorem~\ref{RAthm2}}
Let $\xi_{ni} = n^{-1/2} \{ \ell_\tau'( \nu_i ) - m_\tau \}$ be i.i.d. random variables with mean zero, where $m_\tau = \e \{ \ell'_\tau(\nu ) \}$. Put $W_{0n} = \sn \xi_{ni}$ and define $\sigma_{\tau}^2 =  \var\{ \ell_\tau'(\nu ) \}$. Let $G$ and $G_{\tau}$ be two centered Gaussian random variables with variance 1 and $\sigma^{-2} \sigma_{\tau}^2$, respectively. The well-known Berry-Esseen inequality states that
\begin{align}
	 \sup_{ x\in \bbr }  | \mathbb{P}(   \sigma^{-1} W_{0n} \leq x) - \mathbb{P}(G_\tau \leq x ) | \leq
 c_1  \sigma_{\tau}^{-3} v_3  \, n^{-1/2}	  ,  \label{BE.1}
\end{align}
where $c_1>0$ is an absolute constant. By Proposition~\ref{RAlem2}, $\sigma_{\tau}^2 \geq  \sigma^2  -   \sigma^4 \tau^{-2}  - 2 (\kappa-2)^{-1}v_\kappa   \tau^{2-\kappa}$. Hence, the right-hand side of \eqref{BE.1} can be further bounded by $2\sqrt{2}\,c_1  \sigma^{-3} v_3  \, n^{-1/2}$, provided that $\tau  \geq     2\sigma \vee  \{ 8(\kappa-2)^{-1}\sigma^{-2} v_\kappa\}^{1/(\kappa-2)}$.

Next we prove that the distributions of Gaussian random variables $G_0$ and $G_\tau$ are also close. Again, using Proposition~\ref{RAlem2} we deduce that $ |  \sigma^{-2}\sigma_{\tau}^2  - 1  | \leq  \sigma^{-2} \{ 2(\kappa-2)^{-1} v_\kappa \tau^{2-\kappa} +   \sigma^4  \tau^{-2} \} \leq 1/2$ for sufficiently large $\tau$ as above. Then, applying Lemma~A.7 in the supplement of \cite{SZ2015} yields
\begin{align}
	 \sup_{ x\in \bbr }  | \mathbb{P}(G  \leq x) - \mathbb{P}(G_\tau \leq x ) | \leq  (\kappa-2)^{-1} \sigma^{-2} v_\kappa \, \tau^{2-\kappa}  +  \sigma^{2}   \tau^{-2}  /2  . \label{BE.2}
\end{align}

Finally, note that $\sigma^{-1} W_n = \sigma^{-1} W_{0n} + \sigma^{-1} m_\tau\sqrt{n} $. This, together with \eqref{BE.1}, \eqref{BE.2}, Proposition~\ref{RAlem2} and the inequality $\sup_{x\in \bbr} \mathbb{P}( x \leq G \leq x+\varepsilon ) \leq (2\pi)^{-1/2} \varepsilon$ completes the proof of the theorem.   \qed

\subsection{Proof of Theorem~\ref{RAthm3}}

Keeping the notations in the proof of Theorem~\ref{RAthm2}, we define $T_0  =  \sigma_\tau^{-1} W_{0n}$. First we prove that $T$ and $T_0$ are sufficiently close with overwhelmingly high probability. Note that
\begin{align}
	&  | T - T_0  |  \nn \\
	& \leq \frac{\sigma}{\sigma_\tau} \bigg\{  |\sigma^2  - \sigma_\tau^2|   \frac{\sqrt{n}}{\sigma^3}  | \hat{\mu} - \mu^* | + \frac{1}{\sigma} |  \sqrt{n} \, (\hat{\mu} - \mu^*) - W_n | + \frac{\sqrt{n}}{\sigma} |m_\tau| \bigg\}  . \nn
\end{align}
By Proposition~\ref{RAlem2}, we have $|\sigma^2  - \sigma_\tau^2| \leq 2 v_3 \tau^{-1}$ and $|m_\tau | \leq v_3\tau^{-2}$. This, together with Theorem~\ref{RAthm2} yields that
\bee
	\mathbb{P} \big( | T - T_0 | \geq \delta_n \big) \leq C  e^{-w_n}  \label{approxi.1}
\eee
where $\delta_n = c n^{-1/2}(d+w_n) $.

Next we establish a Berry-Esseen type bound for $T$. Define i.i.d. zero-mean random variables $\eta_{i} =  \sigma_\tau^{-1}  \{ \ell'_{\tau }( \nu_i ) - \e \ell'_\tau(\nu_i ) \}$, $i=1,\ldots, n$, such that $T_0 = n^{-1/2} \sn \eta_{i}$. Note that $\max_{1\leq i\leq n} |\eta_{i}| \leq  (2 \sigma_\tau^{-1} \tau \, n^{-1/2} ) \sqrt{n}$, $n^{-1}\cov(\eta_{1} + \cdots + \eta_{n}) = 1$ and $\beta_n :=  n^{-3/2} \sn \e |\eta_{i}|^3 \leq  \sigma_\tau^{-3}  v_3 \, n^{-1/2}$. Applying the Berry-Esseen inequality to $T_0$, and using \eqref{approxi.1}, we deduce that
\begin{align}
	\sup_{x \in \bbr}  | \mathbb{P} ( T \leq x ) - \Phi(x)  | \leq     C \big\{  n^{-1/2}(d+w_n) + e^{-w_n } \big\}   .   \nn
\end{align}
The conclusion \eqref{fs.GAR.hatT} for $0\leq z\leq 1$ thus follows immediately.

It suffices to prove \eqref{fs.GAR.hatT} for $z \in [1, o\{ \min(  \sqrt{w_n}  ,  \sqrt{n}w_n^{-1} ) \} )$. By \eqref{approxi.1}, we have for every $z\geq 1$ and for all sufficiently large $n$,
\begin{align}
	 \mathbb{P} \big( | T_0 |  \geq z +  \delta_n  \big) - C  e^{-w_n} \leq  \mathbb{P}  \big( | T | \geq z  \big)  \leq  \mathbb{P} \big( | T_0 |  \geq z -  \delta_n  \big) +  C e^{-w_n}.  \label{uni.bd}
\end{align}
For $|T_0|$, applying Lemma~3.1 in the supplement of \cite{LS2014} with $d=1$, $B_n =n$, $c_n \asymp w_n^{-1/2}$, $b_n = n^{-1}$, $d_n = n^{-3/10}  $, $t_n = (  C_{3,1}^{-1/2} \vee 4) ( \sqrt{\log n} + B_n^{-1/2} x )$ and $x = B_n^{1/2}t$, we deduce that for all sufficiently large $n$,
\begin{align}
 & \big| \mathbb{P}\big( | T_0 |  \geq t \big) - \mathbb{P}\big(|G| \geq t \big)   \big| \nn \\
 & \leq C   \bigg\{ \frac{  ( \sqrt{\log n} + t )^3 }{\sqrt{n}} + \frac{1+t}{n^{3/10}} \bigg\}  \mathbb{P}\big( | G | \geq t \big) + 7 n^{-1} e^{-t^2}  + 9 \exp\bigg( -   \frac{ c n^{2/5} }{  \log n }  \bigg)  \nn
\end{align}
uniformly for $0\leq t \leq c  \min( \sqrt{w_n} , n^{1/6} )$. As a direct consequence, we have
\begin{align}
	 \mathbb{P}\big( | T_0 |  \geq  t \big) = \big( 1 + C_{n,t}   \big)  \mathbb{P}\big(|G| \geq t \big)   \label{uni.GAR}
\end{align}
uniformly for $0  \leq  t \leq c  \min( \sqrt{w_n} , n^{1/6} )$, where $| C_{n,t}| \leq C   \{  (\sqrt{\log n} + t)^3     n^{-1/2}   +  (1+t) n^{-3/10}  \}$. Moreover, note that for any $t> 0$, $ t(1+t^2)^{-1} e^{-t^2/2} \leq \sqrt{2\pi }  \, \{ 1-\Phi(t) \} \leq  t^{-1} e^{-t^2/2} $. Therefore, for every $z\geq 1$ and all sufficiently large $n$ such that $z-\delta_n>0$,
\begin{align}
\begin{split}
	& \big| \mathbb{P}\big( |G| \geq z-\delta_n \big) - \mathbb{P}\big( |G| \geq z \big) \big|   \\
	  & \leq \frac{2\delta_n}{\sqrt{2\pi}}  \exp\{-(z-\delta_n)^2/2\}   \leq \mathbb{P}\big( |G| \geq z \big)   (1+z)\delta_n \exp(z\delta_n) ,  \\
	& \big| \mathbb{P}\big(|G| \geq z + \delta_n \big) - \mathbb{P}\big( |G| \geq z \big) \big|   \\
	 & \leq \frac{2}{\sqrt{2\pi}}\delta_n  \exp(-z^2/2)  \leq \mathbb{P}\big(|G| \geq z \big) (1+z) \delta_n.
\end{split}  \label{Gaussian.perturbation}
\end{align}

Combining \eqref{uni.bd}, \eqref{uni.GAR} and \eqref{Gaussian.perturbation} we deduce that the convergence in \eqref{fs.GAR.hatT} holds uniformly for $1\leq z \leq o\{ \min(  \sqrt{w_n}  ,  \sqrt{n}w_n^{-1} ) \}$, which completes the proof of \eqref{fs.GAR.hatT}.  \qed

\section{Proofs of the results in Section~3.4}

We present here the proofs to the main theorems in Section~\ref{secT}, starting with a few essential technical results stated as propositions and proved in Section~\ref{appB} below. Throughout, we use $C$ and $c$ to denote positive constants independent of $n$ and $p$, which may take different values at each occurrence.

\subsection{Technical tools}

\begin{proposition} \label{prop1}
Under Conditions~(C1) and (C2), it holds
\begin{align}
 \max_{1\leq j\leq p}  \sup_{ 0\leq z\leq o( \sqrt{w_n} \wedge \sqrt{n} w_n^{-1} ) }  \bigg| \frac{ \mathbb{P}(\sqrt{n} \, \sigma_{\nu,jj}^{-1/2}|\hat{\mu}_j - \mu_j |  \geq z ) }{ 2-2\Phi(z) } - 1 \bigg| \to 0    \label{unif.GAR}
\end{align}
as $(n,p)\to \infty$.
\end{proposition}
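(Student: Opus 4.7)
The plan is to reduce this uniform statement to the single-coordinate Cram\'er-type moderate deviation in Theorem~\ref{RAthm3} applied to each marginal regression
\[
X_{ij} = \mu_j + \bb_j^\T \bff_i + \nu_{ij}, \quad i = 1,\ldots, n,
\]
for $j = 1,\ldots, p$, and verify that the quantitative bound is uniform in $j$ over the allowed range of $z$. Note that each such regression fits the heteroscedastic model \eqref{linear.model} with covariate $\bff \in \bbr^K$ ($K$ fixed), intercept $\mu_j$, coefficient $\bb_j$, and noise $\nu_j = \sigma_j(\bff) u_j$ satisfying $\e(\nu_j \mid \bff) = 0$ and $\var(\nu_j) = \sigma_{\nu,jj}$. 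Condition~(C1) guarantees that $\bff$ is centered sub-Gaussian with $\|\bSigma_f^{-1/2}\bff\|_{\psi_2} \leq C_f$, and that $c_v \leq \sigma_{\nu,jj} \leq \e\nu_j^2 \leq (\e \nu_j^4)^{1/2} \leq C_v^{1/2}$ uniformly in $j$. Condition~(C2) ensures the choice $\tau = \tau_0 \sqrt{n}\, w_n^{-1/2}$ satisfies $\tau_0 \geq \sigma_{\nu,jj}^{1/2}$ for every $j$.

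The next task is to check that the ``absolute'' constant $C$ in Theorem~\ref{RAthm3} may be chosen independently of $j$. That constant depends on $\tau_0$, on $K_0$ (the sub-Gaussian constant of the standardized covariate, here bounded by $C_f$ by (C1)(iii)), and on $\|\bS^{-1/2}\overline{\bS}\bS^{-1/2}\|$ where $\bS = \e[(1,\bff^\T)^\T(1,\bff^\T)]$ and $\overline{\bS} = \e\{\sigma_j^2(\bff)(1,\bff^\T)^\T(1,\bff^\T)\}$. The first two are clearly uniform in $j$. For the last, since $\nu_j = \sigma_j(\bff) u_j$ with $u_j \perp \bff$ and $\e u_j^2 = 1$, one has $\e\sigma_j^4(\bff) \leq \e\nu_j^4 \leq C_v$, so a Cauchy--Schwarz estimate together with sub-Gaussianity of $\bff$ bounds $\|\bS^{-1/2}\overline{\bS}\bS^{-1/2}\|$ by a constant depending only on $C_f$ and $C_v$, uniformly in $j$. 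Hence Theorem~\ref{RAthm3} yields
\[
\mathbb{P}\big(\sqrt{n}\,\sigma_{\nu,jj}^{-1/2}|\hat{\mu}_j - \mu_j| \geq z\big) = (1 + C_{n,z,j})(2 - 2\Phi(z))
\]
with $\max_{1\leq j\leq p} |C_{n,z,j}| \leq C\{(\sqrt{\log n}+z)^3 n^{-1/2} + (1+z)(n^{-3/10} + n^{-1/2} w_n) + e^{-w_n}\}$, uniformly in $z$ in the stated range.

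Finally, it remains to show that the right-hand side of this bound tends to zero uniformly for $0 \leq z = o(\sqrt{w_n} \wedge \sqrt{n}\, w_n^{-1})$. The term $e^{-w_n}$ vanishes since $w_n \to \infty$. For the polynomial piece, the admissible range forces $z \leq c\,n^{1/6}$ at its widest (achieved when $w_n \asymp n^{1/3}$), so $(\sqrt{\log n}+z)^3 n^{-1/2} = o(1)$ uniformly. For $(1+z)(n^{-3/10} + n^{-1/2} w_n)$, the first summand is uniformly $o(1)$ since $z = o(n^{1/6})$, and the second satisfies $z \cdot n^{-1/2} w_n = o(\sqrt{n}\, w_n^{-1}) \cdot n^{-1/2} w_n = o(1)$ by the constraint $z = o(\sqrt{n}\, w_n^{-1})$. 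Combining these estimates yields \eqref{unif.GAR}.

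The main obstacle is the verification of uniformity in $j$ of the constants in Theorem~\ref{RAthm3}; once that is in hand, the rest reduces to an elementary bookkeeping argument on the admissible range of $z$ determined by $w_n$. In particular, no union bound or further probabilistic argument is required, because Theorem~\ref{RAthm3} already provides a relative-error statement with an explicit deterministic envelope.
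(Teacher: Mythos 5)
Your proposal is correct and follows exactly the paper's route: the paper states that Proposition~\ref{prop1} is a direct consequence of Theorem~\ref{RAthm3} (applied coordinatewise to the marginal regressions \eqref{marginal.model}) and omits the proof. Your write-up simply supplies the details the paper leaves implicit, namely the uniformity in $j$ of the constants (guaranteed by (C1)--(C2)) and the bookkeeping showing the error envelope vanishes over the stated range of $z$.
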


Proposition~\ref{prop1} is a direct consequence of Theorem~\ref{RAthm3}. The proof is thus omitted.

\begin{proposition} \label{prop2}
Under Conditions~(C1) and (C2), we have
\begin{align}
	 \max_{1\leq j\leq p}  \big|  \hat \sigma_{\nu,jj}^{-1} \sigma_{\nu,jj}  - 1  \big| \leq  C    n^{-1/2}\sqrt{w_n}  \label{var.consist}
\end{align}
with probability at least $1- C p e^{- w_n}$ for all sufficiently large $n$.
\end{proposition}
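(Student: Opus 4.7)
\textbf{Proof plan for Proposition~\ref{prop2}.} The plan is to write
\[
\hat\sigma_{\nu,jj} - \sigma_{\nu,jj}
= (\hat\theta_j - \theta_j) - (\hat\mu_j^2 - \mu_j^2) - \bigl(\hat\bb_j^\T\hat\bSigma_f\hat\bb_j - \bb_j^\T\bSigma_f\bb_j\bigr),
\]
control each summand uniformly in $j$ at the rate $\sqrt{w_n/n}$ with probability $\ge 1-Ce^{-w_n}$, and then apply a union bound over $j=1,\dots,p$. Since Condition~(C1)(ii) gives $\sigma_{\nu,jj}\ge c_v>0$ uniformly, absolute rates translate to the relative bound in \eqref{var.consist}.

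For the regression part, I would apply Theorem~\ref{RAthm1} with $d=K$ (a fixed constant) to the linear model $X_{ij}=\mu_j+\bb_j^\T\bff_i+\nu_{ij}$, whose noise satisfies $\e(\nu_j^2)=\sigma_{\nu,jj}\le C_v^{1/2}$ by $\e(\nu_j^4)\le C_v$, and whose covariate $\bff_i$ is sub-Gaussian via (C1)(iii). With $\tau=\tau_0\sqrt{n/w_n}$ as in (C2), this yields
\[
|\hat\mu_j-\mu_j|+\|\bSigma_f^{1/2}(\hat\bb_j-\bb_j)\|\le C\sqrt{w_n/n}
\]
with probability $\ge 1-7e^{-w_n}$. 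For $\hat\theta_j$, I would apply Theorem~\ref{RAthm1} in the no-covariate case ($d=0$) to the sample $\{X_{ij}^2\}_{i=1}^n$, using $\gamma=\gamma_0\sqrt{n/w_n}$; the required second moment is $\var(X_j^2)\le\e(X_j^4)\le C$, which follows by bounding $X_j^4\le C(\mu_j^4+(\bb_j^\T\bff)^4+\nu_j^4)$ and invoking sub-Gaussianity of $\bff$ together with $\e(\nu_j^4)\le C_v$ and the implicit uniform boundedness of $|\mu_j|$ and $\|\bSigma_f^{1/2}\bb_j\|$. This yields $|\hat\theta_j-\theta_j|\le C\sqrt{w_n/n}$ with probability $\ge 1-Ce^{-w_n}$. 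Finally, a standard sub-Gaussian matrix concentration (e.g.\ Remark~5.40 of \cite{V2012}, as already invoked in the proof of Theorem~\ref{RAthm1}) gives $\|\hat\bSigma_f-\bSigma_f\|\le C\sqrt{w_n/n}$ on a single event of probability $\ge 1-2e^{-w_n}$, independent of $j$.

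To assemble the three bounds, I would use $|\hat\mu_j^2-\mu_j^2|\le|\hat\mu_j-\mu_j|(|\hat\mu_j|+|\mu_j|)$ and the decomposition
\[
\hat\bb_j^\T\hat\bSigma_f\hat\bb_j-\bb_j^\T\bSigma_f\bb_j
=(\hat\bb_j-\bb_j)^\T\hat\bSigma_f(\hat\bb_j+\bb_j)+\bb_j^\T(\hat\bSigma_f-\bSigma_f)\bb_j,
\]
each term being $O(\sqrt{w_n/n})$ by the preceding steps together with the uniform boundedness of $\|\bb_j\|$ and $\|\bSigma_f\|$. One then needs to check that the definition~\eqref{RA-var} reduces to the "natural" subtraction branch rather than the fallback $\hat\theta_j$: on the high-probability event just constructed, the natural expression satisfies $\hat\theta_j-\hat\mu_j^2-\hat\bb_j^\T\hat\bSigma_f\hat\bb_j\ge\sigma_{\nu,jj}-o(1)\ge c_v/2>0$, so the truncation clause is inactive. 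A union bound over $p$ values of $j$ contributes the factor $p$ in the stated probability $1-Cpe^{-w_n}$.

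The main obstacle I anticipate is the uniform-in-$j$ application of Theorem~\ref{RAthm1}: the constants $a_1,a_2,a_3$ depend on the covariance structure of the regression design (here $\bS=\diag(1,\bSigma_f)$, independent of $j$) and on moment/sub-Gaussian parameters of $\bff$ and $\nu_j$, which are uniformly controlled under (C1). Once this uniformity is verified, the calculation is routine triangle-inequality bookkeeping. A secondary subtlety is ensuring that the fourth-moment bound $\e(X_j^4)\le C$ holds uniformly in $j$, which implicitly requires uniform boundedness of $|\mu_j|$ and $\|\bSigma_f^{1/2}\bb_j\|$—a condition consistent with the sparsity and finite signal-magnitude assumptions operating in the rest of Section~\ref{sec3}.
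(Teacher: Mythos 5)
Your proposal is correct and follows essentially the same route as the paper's proof: the same decomposition of $\hat\sigma_{\nu,jj}-\sigma_{\nu,jj}$ into the $\hat\theta_j$, $\hat\mu_j^2$ and $\hat\bb_j^\T\hat\bSigma_f\hat\bb_j$ pieces, Theorem~\ref{RAthm1} for the regression estimates, sub-Gaussian covariance concentration from \cite{V2012} for $\hat\bSigma_f$, and a union bound over $j$. The only cosmetic difference is that the paper bounds $|\hat\theta_j-\theta_j|$ by citing Theorem~5 of \cite{FLW2014} rather than applying Theorem~\ref{RAthm1} in the intercept-only case ($d=0$), while your explicit check that the truncation branch of \eqref{RA-var} is inactive on the high-probability event, and that the constants are uniform in $j$ under (C1)--(C2), spells out details the paper leaves implicit.
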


The next two propositions give an uniform law of large numbers for $p_0^{-1} \sum_{j \in \mathcal{H}_0} 1(| T_j| \geq z)$ under dependence and independence, respectively, where $\mathcal{H}_0 =\{ j: 1\leq j\leq p, \mu_j =0\}$ and $p_0={\rm Card}(\mathcal{H}_0)$.

\begin{proposition} \label{prop3}
Assume Conditions (C1)--(C3) hold. Then, for any sequence of positive numbers $m_p \leq p$ satisfying $ m_p \to \infty$, we have as $(n,p) \to \infty$,
\begin{align}
	\sup_{0\leq z\leq  \Phi^{-1} (1-  m_p /(2p) ) } \bigg| \frac{\sum_{j\in \mathcal{H}_0} 1(| {T}_j| \geq z) }{ 2p_0\Phi(-z) } - 1 \bigg| \rightarrow 0 \ \ \mbox{ in probability.}  \label{unif.lln}
\end{align}
\end{proposition}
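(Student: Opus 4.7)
The plan is to prove Proposition~\ref{prop3} in two stages: a first and second moment calculation for $V(z) := \sum_{j \in \mathcal{H}_0} \mathbf{1}(|T_j|\geq z)$, followed by a discretization argument to upgrade pointwise convergence to uniformity. Write $\mu(z) := 2 p_0 \Phi(-z)$; the goal is $V(z)/\mu(z) \to 1$ in probability, uniformly over $z \in [0,\Phi^{-1}(1-m_p/(2p))]$. For the mean, since $\Phi^{-1}(1-m_p/(2p)) = O(\sqrt{\log p})$ and Condition~(C3) gives $w_n \asymp n^{1/5}$ with $\log p = o(n^{1/5})$, we have $\sqrt{\log p} = o(\sqrt{w_n})$ and $\sqrt{\log p} = o(\sqrt{n}\, w_n^{-1})$, so the range of $z$ lies inside the moderate-deviation window of Proposition~\ref{prop1}. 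Consequently $\mathbb{P}(|T_j|\geq z) = (1+o(1)) \cdot 2\Phi(-z)$ uniformly in $j$ and $z$, which yields $\mathbb{E}[V(z)] = (1+o(1))\,\mu(z)$.

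For the variance, decompose
\[
\mathrm{Var}(V(z)) = \sum_{j \in \mathcal{H}_0} \mathrm{Var}(\mathbf{1}(|T_j|\geq z)) + \sum_{j\neq k,\, j,k\in\mathcal{H}_0} \mathrm{Cov}(\mathbf{1}(|T_j|\geq z),\mathbf{1}(|T_k|\geq z)).
\]
The diagonal is at most $\mu(z)$, so its contribution to $\mathrm{Var}(V(z))/\mu(z)^2$ is $O(1/\mu(z)) = O(p/(p_0 m_p)) = o(1)$ since $p_0/p \to \pi_0 > 0$ and $m_p \to \infty$. For the off-diagonal terms, I would first use the Bahadur representation (Theorem~\ref{RAthm1}) and Proposition~\ref{prop2} to approximate $T_j$ by the standardized sum $\widetilde T_j = \sigma_{\nu,jj}^{-1/2} n^{-1/2} \sum_{i=1}^n \nu_{ij}$ with a remainder whose tail is sub-exponentially small. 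The joint distribution of $(\widetilde T_j, \widetilde T_k)$ is then close to bivariate normal with correlation $\rho_{\nu,jk}$, and a Berman-type inequality gives
\[
\bigl|\mathrm{Cov}(\mathbf{1}(|T_j|\geq z),\mathbf{1}(|T_k|\geq z))\bigr| \leq C\,\frac{|\rho_{\nu,jk}|}{\sqrt{1-\rho_{\nu,jk}^2}}\,\exp\!\Bigl(-\frac{z^2}{1+|\rho_{\nu,jk}|}\Bigr)
\]
up to negligible terms. Split the pairs per Condition~(C3): for strongly correlated pairs with $|\rho_{\nu,jk}| > (\log p)^{-2-\kappa}$ there are at most $p_0 s_p = O(p^{1+r})$ of them, and each contributes $O(\Phi(-z)^{2/(1+\rho)})$, so using $\Phi(-z) \geq m_p/(2p)$ the normalized total is $O(p^{r-1+2\rho/(1+\rho)} m_p^{-2\rho/(1+\rho)})$, which vanishes precisely because $r < (1-\rho)/(1+\rho)$. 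For weakly correlated pairs each contribution is $O((\log p)^{-2-\kappa}\, \Phi(-z)^{2-O((\log p)^{-2-\kappa})})$, and the normalized total becomes $O((\log p)^{-2-\kappa}) \cdot \exp(O((\log p)^{-1-\kappa})) = o(1)$.

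Chebyshev's inequality then delivers pointwise convergence in probability. To obtain uniformity, I would choose a grid $0 = z_0 < z_1 < \cdots < z_N = \Phi^{-1}(1-m_p/(2p))$ with $N$ polylogarithmic in $p$ such that $\mu(z_k)/\mu(z_{k+1}) \leq 1+\eta$ for arbitrary $\eta > 0$, sandwich $V(z)$ between $V(z_{k+1})$ and $V(z_k)$ for $z \in [z_k, z_{k+1}]$ by monotonicity of the indicator, and apply the union bound.

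The main obstacle is making the Gaussian comparison for the pair $(T_j,T_k)$ rigorous and uniform: $T_j$ is neither exactly normal nor exactly a linear statistic, and the self-normalization involves $\hat\sigma_{\nu,jj}$ rather than $\sigma_{\nu,jj}$. Establishing the Berman-type tail bound therefore requires combining the nonasymptotic Bahadur representation, so that the pair reduces to a bivariate standardized sum; a two-dimensional Cram\'er-type moderate deviation (or a Slepian/Stein comparison) to pass from the sum to its Gaussian analogue; and Proposition~\ref{prop2} to absorb the self-normalization error --- each of these steps must hold uniformly in $(j,k)$ and in $z$ throughout the allowed range.
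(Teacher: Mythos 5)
Your overall skeleton --- marginal moderate deviations to pin down the mean of $V(z)$, a correlation-based splitting of the pairs to control the variance, then Chebyshev plus a multiplicative grid --- is exactly the architecture of the paper's proof, and your exponent arithmetic is right (the strongly correlated pairs contribute $O\bigl(p^{r-1+2\rho/(1+\rho)}m_p^{-2\rho/(1+\rho)}\bigr)$, which vanishes precisely when $r<(1-\rho)/(1+\rho)$, matching the role of Condition~(C3)). The genuine gap is the step you yourself flag as the main obstacle: the joint tail bounds for the pairs $(T_j,T_k)$, which the paper isolates as Lemma~\ref{lemB1} (inequalities \eqref{joint.tail} and \eqref{joint.md}) and proves in detail. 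Beyond being left unproven, your proposed route to them would fail as stated, because you reduce $T_j$ to the standardized sum of the \emph{raw} errors, $\widetilde T_j=\sigma_{\nu,jj}^{-1/2}n^{-1/2}\sum_{i=1}^n\nu_{ij}$. The Bahadur representation \eqref{Bahadur.representation} approximates $\sqrt{n}\,(\hat\mu_j-\mu_j)$ by $n^{-1/2}\sum_{i=1}^n\ell'_\tau(\nu_{ij})$, a sum of \emph{truncated} (hence bounded by $\tau$) variables; passing further to the raw sum incurs an error whose tail is only polynomially small, since under Condition~(C1) the $\nu_{ij}$ possess just four moments. At the scale $z\asymp\sqrt{\log p}$, where the probabilities being tracked are of order $(m_p/p)^{c}$, such polynomial remainders are fatal.

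The boundedness of $\ell'_\tau(\nu_{ij})$ is precisely what powers both halves of Lemma~\ref{lemB1}. For strongly correlated pairs only an upper bound is needed (they are few), and the paper gets it with no Gaussian comparison at all: Bernstein's inequality applied to $T_{0j}\pm T_{0k}$, whose variances are at most $2(1+|\rho_{\nu,jk}|)+o(1)$, yields $\exp\{-(1-\delta)z^2/(1+\rho)\}$ directly. Your Berman-type route instead presupposes a \emph{relative-error} bivariate Gaussian approximation for correlated, non-Gaussian, self-normalized statistics throughout the moderate-deviation range; an absolute-error bivariate Berry--Esseen bound of size $O(n^{-1/2})$ cannot deliver this, because the joint probabilities being compared can be far smaller than $n^{-1/2}$. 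For weakly correlated pairs, where two-sided asymptotics are required, the paper combines Bentkus's bivariate Berry--Esseen theorem with the bivariate moderate-deviation lemma of Liu and Shao --- again applied to the bounded truncated sums --- and only afterwards compares the nearly independent Gaussian pair with an exactly independent one. So to complete your argument you should (i) replace $\widetilde T_j$ by the Huberized sums $T_{0j}$, using Theorem~\ref{RAthm1} and Proposition~\ref{prop2} to absorb the estimation and self-normalization errors, and (ii) for the strong pairs abandon the Gaussian comparison in favor of Bernstein on $T_{0j}\pm T_{0k}$; this is exactly the content of the paper's Lemma~\ref{lemB1}.
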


\begin{proposition} \label{prop4}
Assume Conditions (C1), (C2) and (C5) hold. Then \eqref{unif.lln} remains valid for any sequence of positive numbers $m_p \leq  p$ satisfying $m_p \to \infty$.
\end{proposition}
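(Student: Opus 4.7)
The plan is to follow the architecture of Proposition~\ref{prop3} but to replace every dependence-handling step by the much cleaner independence-based one granted by Condition~(C5). Introduce the \emph{oracle} statistic
\begin{equation*}
  T_j^{\ast} := \frac{1}{\sqrt{n\,\sigma_{\nu,jj}}}\sum_{i=1}^n \ell'_\tau(\nu_{ij}),
  \qquad
  V^{\ast}(z) := \sum_{j\in\mathcal{H}_0} 1\big(|T_j^{\ast}|\geq z\big).
\end{equation*}
Under~(C5) the random variables $\{T_j^{\ast}\}_{j\in\mathcal{H}_0}$ are mutually independent, so $V^{\ast}(z)$ is a sum of independent Bernoulli indicators for each fixed $z$. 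Applying Theorem~\ref{RAthm2} to each $T_j^{\ast}$ (the same truncation argument used in Proposition~\ref{prop1}) gives $\mathbb P(|T_j^{\ast}|\geq z)=(1+o(1))\{2-2\Phi(z)\}$ uniformly in $j$ and in $z = o(\sqrt{w_n})$, so $\E V^{\ast}(z)=(1+o(1))\,2p_0\Phi(-z)$ uniformly in the relevant range.

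The first step is to sandwich $V(z)$ by $V^{\ast}$. Since $\bS = \E(\bZ\bZ^\T)$ is block diagonal when $\E(\bff)=\mo$, the first coordinate of~\eqref{Bahadur.representation} reads
\begin{equation*}
  \sqrt{n}\,\hat\mu_j = \frac{1}{\sqrt{n}}\sum_{i=1}^n \ell'_\tau(\nu_{ij}) + R_j,
  \qquad\max_{j\in\mathcal{H}_0}|R_j|=O_{\mathbb P}\bigl((K+w_n)/\sqrt n\bigr),
\end{equation*}
where the maximum is obtained by a union bound using the exponential tail in~\eqref{Bahadur.representation}. Combined with Proposition~\ref{prop2} to replace $\hat\sigma_{\nu,jj}$ by $\sigma_{\nu,jj}$, this yields $\max_j|T_j-T_j^{\ast}|\leq \delta_n$ with probability $1-o(1)$, where $\delta_n = C(\sqrt{w_n\log p/n}+w_n/\sqrt n)$. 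The regime $w_n=O(n^{1/3})$ and $\log p=o(w_n)$ in~(C5) then forces $\delta_n\sqrt{\log p}=o(1)$. On the good event, the sandwich $V^{\ast}(z+\delta_n)\leq V(z)\leq V^{\ast}(z-\delta_n)$ holds, and a Mills-ratio computation gives $\Phi(-(z\pm\delta_n))/\Phi(-z)=1+o(1)$ uniformly over $0\leq z\leq \Phi^{-1}(1-m_p/(2p))\leq \sqrt{2\log p}$. Hence it suffices to show $V^{\ast}(z)/\{2p_0\Phi(-z)\}\to 1$ in probability, uniformly in $z$.

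By independence, $\Var V^{\ast}(z)\leq \E V^{\ast}(z)$, yielding the pointwise Chebyshev bound $\mathbb P\{|V^{\ast}(z)/\E V^{\ast}(z)-1|>\epsilon\}\leq \{\epsilon^2 \E V^{\ast}(z)\}^{-1}$. To upgrade this to uniform convergence under the mild hypothesis $m_p\to\infty$, I would choose, for any fixed $\eta>0$, a geometric grid $z_0<\cdots<z_N$ with $\E V^{\ast}(z_k)= p_0(1+\eta)^{-k}$ down to the scale $\sim m_p$, so that $N=O(\log(p/m_p))$ and $\sum_k 1/\E V^{\ast}(z_k)=O(1/(\eta\, m_p))$. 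A union bound over this grid, together with the monotonicity of $V^{\ast}$ and the $(1+\eta)$-slowness of $\E V^{\ast}$ on each sub-interval, delivers $\sup_z|V^{\ast}(z)/\E V^{\ast}(z)-1|\to 0$ in probability, finishing the proof after sending $\eta\downarrow 0$.

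The main obstacle is the first step: ensuring $\delta_n\sqrt{\log p}=o(1)$ at the boundary of the regime permitted by~(C5) forces both the Bahadur remainder of Theorem~\ref{RAthm1} and the variance-estimation error of Proposition~\ref{prop2} to be tracked with their sharp exponential-tail rates uniformly over the $p\to\infty$ coordinates. Once Step~1 is in place, everything that follows is a routine oracle-to-statistic comparison together with a Chebyshev-plus-discretization scheme that crucially relies on the exact unconditional independence of the oracle indicators afforded by~(C5).
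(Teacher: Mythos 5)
Your proposal follows essentially the same route as the paper's proof: replace $T_j$ by an oracle linear statistic via the Bahadur representation \eqref{Bahadur.representation} and Proposition~\ref{prop2} (the paper's $T_{0j}$ plays the role of your $T_j^\ast$, centered at $m_{j,\tau}$ and standardized by $\sigma_{j,\tau}$, a negligible difference under the fourth-moment bound in (C1)); transfer tails through a Gaussian-shift argument using $\delta_n\sqrt{\log p}=o(1)$, which (C5) guarantees; and then exploit independence through the Chebyshev bound $\Var V^\ast(z)\leq \e V^\ast(z)$ together with a union bound over a geometric grid of $O(\log(p/m_p))$ points, whose reciprocal expectations sum to $O(1/m_p)$. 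This is exactly the paper's scheme, with its grid $t_\ell=t_0+e^{\ell}m_p^{1-\delta}/p$ in place of your $(1+\eta)$-grid and the same monotonicity sandwich between grid points.

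One correction is needed: the marginal relative-error approximation $\mathbb{P}(|T_j^\ast|\geq z)=\{1+o(1)\}\,2\Phi(-z)$, uniformly in $z=o(\sqrt{w_n})$, cannot be obtained from Theorem~\ref{RAthm2}, which is a Berry--Esseen bound with \emph{absolute} error of order $n^{-1/2}$. At the top of the range, $z_0=\Phi^{-1}(1-m_p/(2p))$ gives $\Phi(-z_0)= m_p/(2p)$, and under (C5) the dimension $p$ may grow like $e^{n^{c}}$ for some $c<1/3$, so the quantity being approximated is far smaller than the Berry--Esseen error itself; an absolute bound therefore says nothing about the ratio. The tool you actually need---and which your parenthetical appeal to Proposition~\ref{prop1} implicitly supplies---is the Cram\'er-type moderate deviation of Theorem~\ref{RAthm3}, i.e., the bound \eqref{uni.GAR} that the paper invokes at this exact step. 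With that citation fixed, your argument is sound and coincides with the paper's.
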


Proofs of Propositions~\ref{prop2}--\ref{prop4} are provided in Section~\ref{appB}.

\subsection{Proof of Theorem~\ref{thm0}}

First, using Propositions~\ref{prop1}, \ref{prop2} and the inequality $t(1+t^2)^{-1} e^{-t^2/2}  \leq \int_t^\infty e^{-u^2/2} \, du \leq t^{-1} e^{-t^2/2}$ for $t>0$, we deduce that as $n\to \infty$,
\begin{align}
	\max_{1\leq j\leq p} \max_{ 0 \leq z \leq o(\sqrt{w_n} \wedge \sqrt{n} w_n^{-1} )} \max\bigg\{ \bigg| \frac{1-F_{j,n}(z)}{1-\Phi(z)} - 1 \bigg|  , \bigg| \frac{F_{j,n}(-z)}{1-\Phi(z)} -1  \bigg|  \bigg\} = o(1).  \label{thm0.1}
\end{align}
Let $z_n>0$ satisfy $1-\Phi(z_n) = \alpha/(3p)$. Then it is easy to see that $z_n = \{1+ o(1)\} \sqrt{2\log p}$. This, together with \eqref{thm0.1} yields $F_{j,n}(-z_n) + 1- F_{j,n}(z_n) = \{1+o(1)\}\{ \Phi(-z_n) + 1 - \Phi(z_n) \} = \{1+o(1)\}  2\alpha / (3p) $. On the event $\mathcal{S}_{j}$, we have $P_j^{{\rm true}}  > \alpha/p \geq F_{j,n}(-z_n) + 1- F_{j,n}(z_n)$ and hence $|T_j| \leq z_n$ for all sufficiently large $n$. This, together with \eqref{thm0.1} proves \eqref{unif.consistency}.  \qed

\subsection{Proof of Theorem~\ref{thm1}}

Noting that
\bee
	\bigg| \frac{\FDP(z)}{\FDP_{{\rm N}}(z)} - 1 \bigg| \leq  \frac{p_0}{p} \bigg| \frac{ \sum_{j \in \mathcal{H}_0} 1(|T_j| \geq z)}{ 2 p_0  \Phi(-z)} - 1  \bigg| +  p^{-1} (p-p_0),  \nn
\eee
the conclusion \eqref{FDP.AFDP} follows immediately from Proposition~\ref{prop3}. \qed

\subsection{Proof of Theorem~\ref{thm2}}

Recall the definition of $\hat z_{ {\rm N} ,0}$ in \eqref{zBH.def}. As in \cite{LS2014}, using the monotonicity of the indicator function and the continuity of $\Phi$, it can be shown that
\begin{align}
	2 \Phi(- \hat z_{{\rm N},0}) =   \frac{\alpha}{p} \max\bigg\{ \sum_{j=1}^p 1\big( | T_j | \geq \hat z_{{\rm N},0} \big) , 1 \bigg\} \geq \frac{\alpha}{p} . \label{zBH.eqn}
\end{align}
Note that $ \Phi(- \sqrt{ 2\log p } ) \sim  (2\pi)^{-1/2} p^{-1} (2\log p)^{-1/2}  = o(p^{-1})$ as $p\to \infty$. This, together with \eqref{zBH.def} and \eqref{zBH.eqn} implies that $\mathbb{P}( \hat z_{{\rm N},0} \leq \sqrt{ 2\log p } \, ) \to 1$. This upper bound on $\hat z_{{\rm N},0}$ can be refined by utilizing Condition~(C4). To see this, note that
\begin{align}
  &  \sum_{j=1}^p 1\big( | T_j | \geq \sqrt{2\log p} \, \big)  \nn \\
   & \geq  \sum_{j=1}^p 1\bigg(  \sqrt{n} \, \hat{\sigma}_{\nu,jj}^{-1/2} | {\mu}_j| \geq \sqrt{ 2\log p } + \sqrt{n}  \max_{1\leq j\leq p}  \hat{\sigma}_{\nu,jj}^{-1/2} |  \hat{\mu}_j - \mu_j |   \bigg) \nn \\
 & \geq \sum_{j=1}^p 1 \bigg(  \sqrt{n} \,  \sigma_{\nu,jj}^{-1/2} | {\mu}_j|     \geq    \frac{ \sqrt{ 2\log p } +  \max_{1\leq j\leq p}  \hat{\sigma}_{\nu,jj}^{-1/2} \sigma_{\nu,jj}^{1/2} \cdot \sqrt{n}  \max_{1\leq j\leq p} \sigma_{\nu,jj}^{-1/2} | \hat{\mu}_j - \mu_j |  }{\min_{1\leq j\leq p}  \hat{\sigma}_{\nu,jj}^{-1/2} \sigma_{\nu,jj}^{1/2}  }   \bigg).  \label{rej.lbd1}
\end{align}

To proceed, we need to derive concentration inequalities for $\hat{\mu}_j$'s and $\hat{\sigma}_{\nu,jj}$'s, which follow from Propositions~\ref{prop1} and \ref{prop2}. For any $\varepsilon >0$, define the event
$$
	F_{1n}(\varepsilon) = \bigg\{  \sqrt{n} \max_{1\leq j\leq p} \sigma_{\nu,jj}^{-1/2}| \hat{\mu}_j - \mu_j | \leq \sqrt{(2+\varepsilon) \log p}  \bigg\}.
$$
Since $w_n \asymp n^{1/5}$ and $\log p = o( n^{1/5} )$, it follows immediately from \eqref{unif.GAR} that
\begin{align}
	\mathbb{P}\big\{ F_{1n}(\varepsilon)^{{\rm c}} \big\} \leq p \max_{1\leq j\leq p} \mathbb{P}\Big\{   \sqrt{n} \, \sigma_{\nu,jj}^{-1/2} | \hat{\mu}_j - \mu_j |   \geq \sqrt{(2+\varepsilon) \log p}   \Big\} \leq C   p^{-\varepsilon/2} .
\end{align}
Moreover, denote by $F_{2n}$ the event that \eqref{var.consist} holds so that $\mathbb{P}(F_{2n}^{{\rm c}}) \leq C  p e^{-c w_n}  \to 0$. On the event $F_{1n}(\varepsilon) \cap F_{2n}$ with $\varepsilon>0$ small enough, note that for all sufficiently large $n$, the right-hand side of \eqref{rej.lbd1} is bounded from below by $c_p := {\rm Card} \{  j: 1\leq j\leq p , \sigma_{\nu,jj}^{-1/2} |\mu_j|  \geq \lambda n^{-1/2} \sqrt{\log p}  \} $ for $\lambda>2\sqrt{2}$ as in Condition~(C4). Hence, by \eqref{zBH.eqn}, with probability converging to $1$ as $(n,p) \to \infty$, $2\Phi(- \hat z_{{\rm N},0} )  \geq  \alpha p^{-1} \sum_{j=1}^p 1(| T_j |\geq  \sqrt{2\log p} \, )  \geq  \alpha  c_p p^{-1}$. This implies that
\begin{align}
	\mathbb{P}\big\{ \hat z_{{\rm N},0}  \leq  \Phi^{-1} (1-   \alpha  c_p / (2p)    ) \big\} \to 1. \label{zBH.ubd}
\end{align}

Together, \eqref{zBH.eqn}, \eqref{zBH.ubd} and Proposition~\ref{prop3} with $m_p = \alpha c_p $ imply that with probability converging to 1 as $(n,p) \to \infty$,
$$
	\frac{ {\rm FDP}( \hat z_{{\rm N},0} )}{  ( p_0 / p ) \alpha   }= \frac{ p_0^{-1}\sum_{j\in \mathcal{H}_0} 1(| T_j | \geq \hat z_{{\rm N},0} ) }{ \alpha p^{-1} \max\{ \sum_{j=1}^p 1(| T_j| \geq \hat z_{{\rm N},0} ) , 1 \} }  =  \frac{\sum_{j\in \mathcal{H}_0} 1(| T_j | \geq \hat z_{{\rm N},0} ) }{2p_0 \Phi(-\hat z_{{\rm N},0} )} \to 1.
$$
This completes the proof of \eqref{oracle.FDP.converge}. \qed

\subsection{Proof of Theorem~\ref{thm3}}  Theorem~\ref{thm3} is proved similarly to Theorem~\ref{thm2}, and so is not derived in detail here. The only difference in the argument is to use Proposition~\ref{prop4} instead of Proposition~\ref{prop3}.  \qed

\subsection{Proof of Theorem~\ref{thm4}}

The proof follows an argument similar to that in the proof of Theorem~\ref{thm2}. It suffices to show that Propositions~\ref{prop1}, \ref{prop2} and \ref{prop3} remain valid for test statistics $S_j$'s and variance estimators $\wt \sigma_{\nu,jj}$'s. Note that the only difference between $T_j$ and $S_j$ is on the variance estimation. The former uses $\hat \sigma_{\nu,jj}$ as an estimator of $\sigma_{\nu,jj} = \sigma_{jj} - \bb_j^\T \bSigma_f \bb_j$, while the latter uses $\wt \sigma_{\nu,jj}$ defined in \eqref{mom-var}.

Reviewing the proof of Propositions~\ref{prop1}--\ref{prop3}, it remains to show that a similar concentration result to \eqref{var.unif.consist1} holds for $\wt \sigma_{jj}(V)$'s for a well-chosen $V$. To see this, using Theorem~1 in \cite{JL2015} with $h(x,y)= (x-y)^2/2$, $m=2$, $q=1$ and $\delta=e^{-w_n}$ for $w_n$ as in Condition~(C2) and the union bound, we deduce that with probability greater than $1-2pe^{-w_n}$, $\max_{1\leq j\leq p}  |\sigma_{jj}^{-1} \wt \sigma_{jj}(V) - 1   | \leq C \kappa_j^{1/2} n^{-1/2} \sqrt{w_n}$ as long as $n\geq 128 \ceil{w_n}$, where $\wt \sigma_{jj}(V)$ is the median-of-means estimator of $\sigma_{jj}$ with $V=V_n = 64 \ceil{w_n}$ and $\kappa_j = \sigma_{jj}^{-2}\, \e(X_j-\mu_j)^4$ is the kurtosis of $X_j$. Keep all other statements the same, we then get the desired result. \qed

\section{Proofs of Propositions~B.2--B.4}
\label{appB}

\subsection{Proof of Proposition~\ref{prop2}}

The proof is based on combining exponential bounds for $\hat{\theta}_j$'s and $\hat{\bb}_j^\T \hat{\bSigma}_f \hat{\bb}_j$'s. For $\hat{\theta}_j$'s, using Theorem~5 in \cite{FLW2014} we deduce that, with $\gamma = \gamma_n$ as in Condition~(C2),
\begin{align}
 	\max_{1\leq j\leq p}  \mathbb{P}\big( | \hat{\theta}_j  - \theta_j | \geq 4 \gamma_0 \,  n^{-1/2} \sqrt{w_n} \big) \leq 2 e^{-w_n}  \nn
\end{align}
as long as $n\geq 8 w_n$, where $ \theta_j = \e ( X_j^2 )$. Then, it follows from the union bound that
\begin{align}
  \max_{1\leq j\leq p} | \hat{\theta}_j  - \theta_j | \leq 4 \gamma_0 \, n^{-1/2} \sqrt{w_n}   \label{var.unif.consist1}
\end{align}
with probability greater than $1-2p e^{-w_n}$ whenever $n\geq 8w_n$. Next, note that for each $j$,
\begin{align}
 &   \big| \| \hat{\bSigma}_f^{1/2} \hat{\bb}_j \|^2 - \|  {\bSigma}_f^{1/2} {\bb}_j \|^2 \big|  \nn \\
    &  \leq \| \bSigma_f^{-1/2} \hat{\bSigma}_f \bSigma_f^{-1/2} -  \bI_K  \| \| \bSigma_f^{1/2}\hat{\bb}_j \|^2  \nn \\
    & \quad +   \big( \| \bSigma_f^{1/2} \hat{\bb}_j   \|+\| \bSigma_f^{1/2} \bb_j \| \big) \| \bSigma_f^{1/2}(\hat{\bb}_j - \bb_j) \|   .  \label{var.unif.ineq}
\end{align}
Applying Theorem~5.39 in \cite{V2012} to i.i.d. random vectors $\bff_{0i} = \bSigma_f^{-1/2}\bff_i$ yields that for every $t >0$, $\| \bSigma_f^{-1/2}\hat{\bSigma}_f \bSigma_f^{ - 1/2}  - \bI_{K}  \| \leq \max(\delta, \delta^2)$ with probability at least $1-2 e^{-t}$, where $\delta = C (K+t)^{1/2} n^{-1/2}$. Taking $t=w_n$, we have
\bee 
	 \|  \bSigma_f^{-1/2}\hat{\bSigma}_f \bSigma_f^{ -1/2}  - \bI_{K} \|  \leq   C n^{-1/2}\sqrt{w_n }   \label{cov.concentration}
\eee
with probability greater than $1-2e^{-w_n}$ for all sufficiently large $n$.

Together, \eqref{var.unif.consist1}--\eqref{cov.concentration} and Theorem~\ref{RAthm1} prove \eqref{var.consist}. \qed

\subsection{Proof of Proposition~\ref{prop3}}

The proof is based on a discretization technique used to prove Theorem~2.1 in \cite{LS2014} and the following results on joint Gaussian approximations.

\begin{lemma}  \label{lemB1}
Under the Conditions (C1)--(C3), we have for any $0<\rho \leq 1$ and $0<\delta <1$,
\begin{align}
	\mathbb{P}\big( | T_j| \geq z, | T_k | \geq z  \big) \leq C \exp\{ - (1-\delta) z^2 / (1+ \rho ) \}  \label{joint.tail}
\end{align}
uniformly for $z\in [0, o(\sqrt{ w_n}))$ and all all $j,k\in \mathcal{H}_0$ satisfying $  j\neq k$ and $|\rho_{\nu , jk}| \leq \rho$. In addition, we have for any $A >0$,
\begin{align}
	\mathbb{P}\big( | T_j| \geq z, | T_k | \geq z  \big) =   (1+ C_{n,z}) \mathbb{P}\big(|G| \geq z\big)^2   \label{joint.md}
\end{align}
uniformly for $0\leq z\leq A \sqrt{\log p}$ and all $j,k\in \mathcal{H}_0$ satisfying $  j\neq k$ and $| \rho_{\nu ,jk} | \leq (\log p)^{-2-\kappa }$, where $G\sim N(0,1)$ and $ |C_{n,z}| \leq C \{   (\log p)^{1/2}   n^{-3/10}  +  (\log p)^{-1-\kappa /2} \}$.
\end{lemma}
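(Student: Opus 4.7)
The plan is to reduce the joint event $\{|T_j|\ge z,\,|T_k|\ge z\}$ to a tail question about a standardised bivariate sum of i.i.d.\ vectors, apply a two-dimensional Cram\'er-type moderate deviation in the spirit of Lemma~3.1 of \cite{LS2014}, and then read off both bounds from bivariate Gaussian tail estimates. Under $\mu_j=\mu_k=0$, Theorem~\ref{RAthm1} applied to each marginal regression \eqref{marginal.model} with $w=w_n$ yields, on an event of probability at least $1-Ce^{-w_n}$, the representation $\sqrt{n}\,\hat\mu_j = n^{-1/2}\sum_{i=1}^n\ell'_\tau(\nu_{ij}) + r_{n,j}$ with $|r_{n,j}|\le Cn^{-1/2}(K+w_n)$, and analogously for $\hat\mu_k$. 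Combined with Proposition~\ref{prop2}, which provides $|\hat\sigma_{\nu,jj}^{-1/2}\sigma_{\nu,jj}^{1/2}-1|\le Cn^{-1/2}\sqrt{w_n}$ uniformly in $j$, this gives $T_j = U_j + R_j$ with
\begin{equation*}
  U_j := \sigma_{\nu,jj}^{-1/2}\,\frac{1}{\sqrt{n}}\sum_{i=1}^n \ell'_\tau(\nu_{ij}),\qquad |R_j|\le C n^{-1/2}\bigl(\sqrt{w_n}\,|U_j| + K + w_n\bigr),
\end{equation*}
off a $Ce^{-cw_n}$-probability event, and likewise for $T_k$.

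The pair $(U_j,U_k)$ is then a normalised sum of i.i.d.\ bivariate summands bounded by $\tau$ in each coordinate. By Proposition~\ref{RAlem2}, $|\mathbb{E}\ell'_\tau(\nu_{ij})|\le v_3\tau^{-2}$ and the normalised covariance matrix of the summand equals $\mathrm{diag}(1,1)$ up to an off-diagonal entry $\rho^*_{jk}$ satisfying $|\rho^*_{jk}-\rho_{\nu,jk}|\le C\tau^{-1}$. Invoking a two-dimensional version of the moderate-deviation result of \cite{LS2014} (Lemma~3.1 there, with $d=2$, $B_n=n$, truncation level $\tau\asymp\sqrt{n/w_n}$, and the auxiliary parameters used in the proof of Theorem~\ref{RAthm3}) produces
\begin{equation*}
  \mathbb{P}\bigl(|U_j|\ge t,\,|U_k|\ge t\bigr) = (1+\epsilon_{n,t})\,\mathbb{P}\bigl(|G^*_j|\ge t,\,|G^*_k|\ge t\bigr)
\end{equation*}
uniformly in $0\le t\le c\sqrt{w_n}$, where $(G^*_j,G^*_k)$ is bivariate standard normal with correlation $\rho^*_{jk}$ and $|\epsilon_{n,t}|\le C\{(\sqrt{\log n}+t)^3 n^{-1/2}+(1+t)n^{-3/10}\}+Ce^{-cw_n}$.

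For \eqref{joint.tail}, I combine this with the classical bivariate Gaussian tail estimate $\mathbb{P}(|G^*_j|\ge t,\,|G^*_k|\ge t)\le C(1-|\rho^*_{jk}|)^{-1}\exp\{-t^2/(1+|\rho^*_{jk}|)\}$ and the bound $|\rho^*_{jk}|\le \rho+C\tau^{-1}$, absorbing both the factor $(1+\epsilon_{n,t})$ and the shift $|R_j|\vee|R_k|=o(1)$ into the slack parameter $\delta$. For \eqref{joint.md}, the expansion $\mathbb{P}(|G^*_j|\ge t,\,|G^*_k|\ge t) = \{1+O(|\rho^*_{jk}|t^2)\}\,\mathbb{P}(|G|\ge t)^2$, valid when $|\rho^*_{jk}|t^2\to 0$ and obtainable via Mehler's identity (or direct integration by parts in the bivariate density), combines with $|\rho^*_{jk}|\le (\log p)^{-2-\kappa}+C\tau^{-1}$ and the bound on $\epsilon_{n,t}$ to yield the asserted form of $|C_{n,z}|$ uniformly in $0\le t\le A\sqrt{\log p}$.

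The main obstacle is to propagate the Bahadur remainder $r_{n,j}$ and the variance correction through the tail at the scale $z\asymp\sqrt{\log p}$ without contaminating the exponent $z^2/(1+\rho)$: replacing $\mathbb{P}(|T_j|\ge z,\,|T_k|\ge z)$ by the joint tail of $(U_j,U_k)$ at $z\pm\mathrm{error}$ requires $(K+w_n)\sqrt{\log p}/\sqrt{n}\to 0$, which is guaranteed by Condition~(C3) via $w_n\asymp n^{1/5}$ and $\log p=o(n^{1/5})$. A secondary hurdle is producing the exact rate $(\log p)^{-1-\kappa/2}$ in the weak-correlation regime, which demands a careful second-order analysis of the bivariate Gaussian tail rather than a crude bound, since a naive worst-case expansion only gives $O(|\rho^*_{jk}|t^2)$ and the extra factor of $(\log p)^{-\kappa/2}$ must be recovered by exploiting the explicit form of the Mehler expansion.
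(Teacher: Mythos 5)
Your treatment of \eqref{joint.md} is essentially the paper's own argument: reduce $T_j,T_k$ to standardized sums of the bounded variables $\ell'_\tau(\nu_{ij})$ (via the Bahadur representation and Proposition~\ref{prop2}), invoke the $d=2$ case of Lemma~3.1 in the supplement of \cite{LS2014}, and compare with an independent bivariate Gaussian. The only real differences are cosmetic: the paper feeds the covariance perturbation $b_n\asymp(\log p)^{-2-\kappa}$ directly into that lemma (with $d_n=(\log p)^{-3/2-\kappa/2}$, which at $t\asymp\sqrt{\log p}$ produces exactly the $(\log p)^{-1-\kappa/2}$ term), so it never introduces a correlated Gaussian limit; and for $0\leq z\leq 1$ it uses Bentkus's bivariate Berry--Esseen theorem \cite{B2003} plus anti-concentration. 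Your Plackett/Mehler step is correct --- for $|\rho^*|t^2\to 0$ one indeed gets $\mathbb{P}(|G_1^*|\geq t,|G_2^*|\geq t)=\{1+O(|\rho^*|t^2)\}\mathbb{P}(|G|\geq t)^2$, which in fact yields the better rate $(\log p)^{-1-\kappa}$ --- so the ``careful second-order analysis'' you flag as a hurdle is not actually needed.

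The genuine gap is in your proof of \eqref{joint.tail}. There you apply the same bivariate moderate-deviation machinery uniformly over pairs with $|\rho_{\nu,jk}|\leq\rho$ for an \emph{arbitrary} constant $\rho\in(0,1]$. Lemma~3.1 of \cite{LS2014} (and any relative-error Gaussian approximation of this type) requires the covariance matrix of the standardized summands to lie within a \emph{vanishing} distance of the identity; with correlation of constant order, let alone $\rho^*_{jk}$ near or equal to $1$ (which the statement permits), the lemma does not apply, and no expansion $\mathbb{P}(|U_j|\geq t,|U_k|\geq t)=(1+\epsilon_{n,t})\mathbb{P}(|G_j^*|\geq t,|G_k^*|\geq t)$ with uniformly small $\epsilon_{n,t}$ is available --- the Gaussian limit can even be degenerate. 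Compounding this, the Gaussian tail bound you quote carries the prefactor $(1-|\rho^*_{jk}|)^{-1}$, which blows up precisely in this regime and cannot be ``absorbed into $\delta$'' uniformly over pairs. The paper sidesteps all of this with an elementary device that you should adopt: since
\begin{equation*}
\{|T_{0j}|\geq z,\ |T_{0k}|\geq z\}\subset\{|T_{0j}+T_{0k}|\geq 2z\}\cup\{|T_{0j}-T_{0k}|\geq 2z\},
\end{equation*}
and each of $T_{0j}\pm T_{0k}$ is a normalized sum of i.i.d.\ variables bounded by $C\tau$ with variance at most $2(1+|\rho_{\nu,jk}|)+Cn^{-1}w_n$, Bernstein's inequality gives the exponent $-z^2/\{1+|\rho_{\nu,jk}|+C(n^{-1}w_n+w_n^{-1/2}z)\}$, which is $\leq -(1-\delta)z^2/(1+\rho)$ uniformly for $z=o(\sqrt{w_n})$. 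Only an upper bound is needed for \eqref{joint.tail}, so the full relative-error approximation you attempt is both unobtainable and unnecessary.
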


Define the function $\Psi(z) = 2\Phi(-z)$, $z\in \bbr$, which is strictly decreasing and continuous. Let $t_0= m_p / p $ and $z_0 = \Psi^{-1}(t_0)$. We claim that $z_0 = \{ 1+ o(1) \} \sqrt{2\log(p / m_p)}$ as $p \to \infty$. To see this, put 
$$
	a_p = \sqrt{2\log(p / m_p) - \log \{ 4 \log(p/m_p) \} },
$$ 
$b_p = \sqrt{2\log(p / m_p)}$, and note that, as $p\to \infty$,
$$
	\Psi(a_p) \sim \sqrt{\frac{2}{\pi}} \frac{2\sqrt{ \log(p/m_p)}}{\sqrt{2\log(p / m_p) - \log \{ 4\log(p/m_p) \} }}  \frac{m_p}{p} , 
$$
$$
	\Psi(b_p ) \sim \frac{1}{\sqrt{\pi \log(p/m_p)}} \frac{m_p}{p} . 
$$
Therefore, $\Psi( b_p) \leq \Psi(z_0) = t_0 \leq \Psi(a_p)$ for all sufficiently large $p$, which proves the claim.

Let $\delta \in (0,1)$ be a constant to be specified. Starting with $t_0$, define $t_\ell = t_0  + e^{\ell} m_p^{1-\delta}/p $ and $z_\ell = \Psi^{-1}(t_\ell)$ for $\ell = 1, \ldots, d_p$ such that $0< t_0 < t_1 < \cdots < t_{d_p} = 1$ and $0 =  z_{d_p} <   \cdots < z_1  < z_0$, where $d_p$ equals either $\floor{ \log \{ (p-m_p)/m_p^{1-\delta}\} }$ or $\floor{ \log \{ (p-m_p)/m_p^{1-\delta}\} } +1$. Observe that, for every $z_{\ell-1} \leq z  \leq z_\ell$,
\begin{align}
	   \frac{\sum_{j\in \mathcal{H}_0} 1(| T_j| \geq z_{\ell  } ) }{ p_0 \Psi(z_{\ell  }) } \frac{\Psi(z_{\ell })}{\Psi(z_{\ell - 1})}   & \leq 
	\frac{\sum_{j\in \mathcal{H}_0} 1(|  T_j| \geq z) }{ p_0 \Psi(z) }   \nn \\
	&  \leq \frac{\sum_{j\in \mathcal{H}_0} 1(| T_j| \geq z_{\ell - 1} ) }{ p_0 \Psi(z_{\ell -1}) } \frac{\Psi(z_{\ell-1})}{\Psi(z_\ell)} . \nn 
\end{align}
Note too that, uniformly in $\ell$,
$$
	\frac{\Psi(z_{\ell-1})}{\Psi(z_\ell)} = \frac{t_0 + p^{-1} m_p^{1-\delta} e^{\ell -1} }{t_0 + p^{-1} m_p^{1-\delta}e^{\ell} } = \frac{1 + m_p^{- \delta}e^{-1}}{ 1 + m_p^{- \delta}} \to 1 .
$$
In view of the last two displays, it is enough to prove that
\begin{align}
	\max_{0\leq \ell \leq d_p} \bigg| \frac{\sum_{j\in \mathcal{H}_0} 1(| T_j| \geq z_\ell ) }{p_0 \Psi(z_\ell)} - 1 \bigg| \to 0   \label{discret.lln}
\end{align}
in probability. For any $\varepsilon>0$, applying Boole's and Markov's inequalities we deduce that
\begin{align}
	& \mathbb{P}\left\{ \max_{0\leq \ell \leq d_p} \bigg| \frac{\sum_{j\in \mathcal{H}_0} 1(| T_j| \geq z_\ell ) }{p_0 \Psi(z_\ell)} - 1 \bigg|  \geq \varepsilon \right\}  \nn \\
	& \leq \sum_{\ell = 0 }^{d_p}  \mathbb{P}\left[  \bigg| \frac{ \sum_{j\in \mathcal{H}_0} \{  1(|T_j| \geq z_\ell ) - \mathbb{P}(|G| \geq z_\ell) \} }{p_0 \Psi(z_\ell)}   \bigg|  \geq \varepsilon \right] \nn \\
	& \leq   \frac{1}{  ( \varepsilon p_0 )^2 }   \sum_{\ell=0}^{d_p} \frac{1}{ \Psi^2(z_\ell)}  \sum_{j\in \mathcal{H}_0}  \bigg( \sum_{k\in \mathcal{I}_j} +  \sum_{k\in \mathcal{I}_j^{{\rm c}}}  \bigg)  Q_{jk}(z_\ell),  \label{ulln.ubd1}
\end{align}
where $\mathcal{I}_j = \{ k \in \mathcal{H}_0 :  | \rho_{\nu ,jk} | \leq (\log p)^{-2- \kappa} \}$ and 
\begin{align}
Q_{jk}(z) & = \mathbb{P}\big( | T_j| \geq z , | T_k| \geq z \big) \nn \\
& \quad - \mathbb{P}\big(|G| \geq z \big) \big\{ \mathbb{P}\big( | T_j| \geq z\big) + \mathbb{P}\big( | T_k| \geq z \big) \big\} + \mathbb{P}\big(|G| \geq z \big)^2 . \nn
\end{align}
Recall that $z_0 \sim \sqrt{2\log(p/m_p)}$. For each $j \in \mathcal{H}_0$ and $k\in \mathcal{I}_j$, from \eqref{fs.GAR.hatT} and \eqref{joint.md} it can be derived that, uniformly in $\ell$,
\begin{align}
	Q_{jk}(z_\ell) \leq C \big\{   (\log p)^{1/2}   n^{-3/10}   + (\log p)^{-1-\kappa/2 }  \big\}    \Psi^2(z_\ell)  .   \label{ulln.ubd2}
\end{align}
On the other hand, for $j \in \mathcal{H}_0$ and $k\in \mathcal{I}_j^{{\rm c}} \setminus \{ j \}$, using \eqref{joint.tail} and the inequality $\Psi(z) \geq (2/\pi)^{1/2} (1+z^2)^{-1} z e^{-z^2/2} $ for $z\geq 0$, we deduce that
\begin{align}
 Q_{jk}(z_\ell)  \leq C ( 1 \vee z_\ell ) \exp\bigg( z_\ell^2 - \frac{1-\delta}{1+ \rho} z_\ell^2  \bigg)\Psi^2(z_\ell)   \label{ulln.ubd3}
\end{align}
uniformly in $\ell$. Together, \eqref{ulln.ubd1}, \eqref{ulln.ubd2} and \eqref{ulln.ubd3} imply that, as long as $0<\delta < \{ 1- \rho - (1+\rho) r \}/2$,
\begin{align}
	&  \mathbb{P}\left\{ \max_{0\leq \ell \leq d_p} \bigg| \frac{\sum_{j\in \mathcal{H}_0} 1(|T_j| \geq z_\ell ) }{p_0 \Psi(z_\ell)} - 1 \bigg|  \geq \varepsilon \right\}  \nn \\
	& \leq C \bigg\{ \frac{1}{p_0}\sum_{\ell =0}^{d_p}  \frac{ 1}{ \Psi(z_\ell)}   +   (\log p)^{3/2}   n^{-3/10}   + (\log p)^{-\kappa /2} + \frac{d_p s_p}{p_0} \exp\bigg( \frac{\rho +\delta}{\rho+1} z_0^2 \bigg)   \bigg\} \nn \\
	& \leq C  \bigg\{   m_p^{-1} + m_p^{-1+\delta} \sum_{\ell =1}^{d_p} e^{-\ell}  +  (\log p)^{3/2}   n^{-3/10}   + (\log p)^{- \kappa /2 } \nn \\
	 & \quad \quad  \quad     +  m_p^{-(2\rho+2\delta)/(1+\rho)}   p^{-1 + r +  (2\rho+ 2\delta) / (1+\rho)  } \sqrt{\log p} \bigg\} \to 0 \nn
\end{align}
as $(n,p) \to \infty$, where $s_p$, $r$ and $\rho$ are defined in Conditions (C3). This proves \eqref{discret.lln}, and hence completes the proof of Proposition~\ref{prop3}. \qed

\subsection{Proof of Proposition~\ref{prop4}}
For $j=1,\ldots, p$, define 
$$
	T_{0j} = \frac{1}{ \sigma_{j,\tau}  \sqrt{n} } \sn \{ \ell'_\tau(\nu_j) - m_{j,\tau} \},
$$
where $\sigma_{j,\tau}^2 = \var\{ \ell'_\tau( \nu_j) \}$ and $m_{j,\tau} = \e  \{ \ell'_\tau(\nu_j)\}$. By \eqref{approx.3} in the proof of Lemma~\ref{lemB1} below and the union bound, we have
\bee
	\mathbb{P}\bigg( \max_{ j \in \mathcal{H}_0}  |T_j - T_{0j}| \geq \delta_n \bigg) \leq C  p e^{-w_n}  \label{prop4-1}
\eee
for all sufficiently large $n$, where $\delta_n = c n^{-1/2} w_n$. Moreover, note that 
\bee
	\Phi(-z) = \Phi(-z \pm \delta_n ) \big\{ 1 +  O\big( n^{-1/2}  w_n  \sqrt{\log p} \big) \big\}  \label{prop4-2}
\eee
uniformly in $0\leq z\leq \Phi^{-1}(1-m_p/(2p))$. In view of \eqref{prop4-1} and \eqref{prop4-2}, and by the argument leading to \eqref{discret.lln}, we only need to prove that as $(n,p) \to \infty$,
\bee
	\sup_{ 0\leq \ell \leq d_p } \bigg|  \frac{\sum_{j\in \mathcal{H}_0} 1(|T_{0j} | \geq z_\ell ) }{2p_0 \Phi(-z_\ell)} - 1 \bigg| \rightarrow 0 \ \ \mbox{ in probability} ,    \label{prop4-3}
\eee
where $0= z_{d_p} < \cdots < z_1 <z_0 = \{1+o(1)\}\sqrt{2\log(p/m_p)} $ are as in the proof of Proposition~\ref{prop3}.

First, it follows from \eqref{uni.GAR} that $\mathbb{P}(|T_{0j}| \geq z) = 2\Phi(-z)\{ 1+o(1) \}$ uniformly in $j \in \mathcal{H}_0$ and $0\leq z \leq  o\{ \min(\sqrt{w_n} , n^{1/6} ) \}$. Also, note that under Condition~(C5), $T_{01}, \ldots, T_{0p}$ are independent random variables. Hence, for any $\varepsilon>0$, using the union bound and Markov's inequality we deduce that for all sufficiently large $n$,
\begin{align}
	& \mathbb{P}\left\{ \max_{0\leq \ell \leq d_p} \bigg| \frac{\sum_{j\in \mathcal{H}_0} 1(| T_{0j} | \geq z_\ell ) }{ 2p_0 \Phi(-z_\ell)} - 1 \bigg|  \geq \varepsilon \right\}  \nn \\
	& \leq \sum_{\ell = 0 }^{d_p}  \mathbb{P}\left[  \bigg| \frac{ \sum_{j\in \mathcal{H}_0} \{  1( | T_{0j} | \geq z_\ell ) - \mathbb{P}(|T_{0j} | \geq z_\ell) \} }{ 2p_0 \Phi(-z_\ell) }   \bigg|  \geq  \frac{ \varepsilon }{2} \right]\nn \\
	& \leq   \frac{1}{  ( \varepsilon p_0 )^2 }   \sum_{\ell=0}^{d_p} \frac{1}{ \Phi^2(-z_\ell)}  \sum_{j\in \mathcal{H}_0}   \mathbb{P}\big( |T_{0j}| \geq z_\ell \big) \big\{ 1-  \mathbb{P}\big( |T_{0j}| \geq z_\ell \big) \big\}  \nn \\
	& \leq    \frac{C}{   \varepsilon^2 p}    \sum_{\ell=0}^{d_p} \frac{1}{ \Phi(-z_\ell)} \leq \frac{C}{\varepsilon^2 } \bigg( m_p^{-1} +  m_p^{-1+\delta} \sum_{\ell=1}^{d_p}  e^{-\ell} \bigg) .   \nn
\end{align}
This proves \eqref{prop4-3}, and thus completes the proof of \eqref{unif.lln}. \qed

\subsection{Proof of Lemma~\ref{lemB1}}

First we prove \eqref{joint.tail}. The conclusion is obvious when $0\leq z\leq 1$, so we only need to focus on the case of $z\geq 1$. For $j =1,\ldots, p$, define $\sigma_{j,\tau}^2 = \var\{ \ell'_\tau(\nu_j) \}$, $m_{j,\tau} = \e \{ \ell'_\tau(\nu_j) \}$ and $ T_{0j} =  \sigma_{j,\tau}^{-1} \, n^{-1/2} \sn \{ \ell'_\tau(\nu_j) - m_{j,\tau}\}$. By Proposition~\ref{prop2}, and using an argument similar to that leads to \eqref{approxi.1}, it can be shown that
\begin{align}
	\max_{ j \in \mathcal{H}_0 } \mathbb{P} \big(   | T_j - T_{0j} | \geq \delta_n  \big) \leq C  e^{-w_n}   \label{approx.3}
\end{align}
for all sufficiently large $n$, where $\delta_n = c  n^{-1/2} w_n$.

For $z\geq 1$ and $  j \neq k \in \mathcal{H}_0$, it follows from \eqref{approx.3} that for all sufficiently large $n$,
\begin{align}
 & \mathbb{P}\big( | T_j | \geq z,   | T_k | \geq z  \big) \nn \\
 & \leq \mathbb{P}\big( | T_{0j} | \geq z - \delta_n ,   | T_{0k} | \geq z - \delta_n \big) + C e^{-w_n} \nn \\
 & \leq \mathbb{P}\big\{ | T_{0j} + T_{0k} | \geq 2(z-\delta_n ) \big\} + \mathbb{P}\big\{ | T_{0j} - T_{0k} | \geq 2(z-\delta_n )  \big\} + C e^{-w_n}.   \label{joint.tail.prob}
\end{align}
For $T_{0j} + T_{0k} = n^{-1/2} \sn [   \sigma_{j,\tau}^{-1} \{ \ell'_\tau( \nu_j ) - m_{j,\tau} \} +  \sigma_{k ,\tau}^{-1} \{ \ell'_\tau( \nu_k )  - m_{k,\tau} \} ]$, note that
\begin{align}
 & \e  \big[   \sigma_{j,\tau}^{-1} \big\{ \ell'_\tau( \nu_j ) - m_{j,\tau} \big\} +  \sigma_{k ,\tau}^{-1} \big\{ \ell'_\tau( \nu_k )  - m_{k,\tau} \big\} \big]^2  \nn \\
 & = 2 + \frac{2}{ \sigma_{j,\tau} \sigma_{k,\tau} } \big[ \e \big\{ \ell'_\tau( \nu_j )  \ell'_\tau( \nu_k ) \big\} - m_{j, \tau} m_{k, \tau}   \big] ,  \label{var.dec}
\end{align}
where
\begin{align}
 &  \e \big\{ \ell'_\tau( \nu_j )\ell'_\tau( \nu_k ) \big\}  \nn \\
 & = \e \big\{ \nu_j  \nu_k 1\big(| \nu_j | \leq \tau,  | \nu_k | \leq \tau \big)  \big\} + \tau \e \big\{ \nu_j 1\big(| \nu_j| \leq \tau , | \nu_k| > \tau \big) \big\} \nn \\
 & \quad + \tau \e \big\{  \nu_k 1\big( | \nu_k| \leq \tau , | \nu_j | > \tau \big) \big\}  + \tau^2 \e \big\{ \sgn( \nu_j \nu_k) 1 \big(| \nu_j | >\tau , | \nu_k | >\tau \big) \big\} \nn \\
 & =  \sigma_{\nu, jk } + \tau \e \big\{  \nu_j 1\big(| \nu_j| \leq \tau , | \nu_k| > \tau \big) \big\} + \tau \e \big\{  \nu_k 1\big( | \nu_k| \leq \tau , | \nu_j | > \tau \big) \big\} \nn \\
 & \quad  - \e \big[ \nu_j \nu_k\big\{ 1\big( | \nu_j|>\tau\big) + 1\big( | \nu_k | > \tau \big) \big\}  \big]  \nn \\
 & \quad + \e\big[ \big\{  \nu_j \nu_k  + \tau^2 \sgn( \nu_j \nu_k ) \big\}  1\big( | \nu_j | >\tau , | \nu_k | >\tau \big) \big] . \nn
\end{align}
Under the condition that $\max_{1\leq j\leq p} \e ( \nu_j^4 )  \leq C_\nu$, this implies
\begin{align}
 \big| \e \big\{ \ell'_\tau(\nu_j)\ell'_\tau(\nu_k) \big\}  -  \sigma_{\nu, jk } \big|  \leq C  n^{-1} w_n .  \label{cov.approxi}
\end{align}
Also, it follows from Lemma~\ref{RAlem2} that $\max_{ j \in \mathcal{H}_0} \max(  |m_{j,\tau}| ,  | \sigma_{\nu ,jj} -  \sigma_{j,\tau}^2   | )  \leq C  n^{-1} w_n$. Substituting this into \eqref{var.dec} gives
\begin{align}
	 \e  \big[  \sigma_{j,\tau}^{-1} \big\{ \ell'_\tau( \nu_j ) - m_{j,\tau} \big\} +  \sigma_{k ,\tau}^{-1} \big\{ \ell'_\tau( \nu_k )  - m_{k,\tau} \big\} \big]^2  \leq 2\big(1 + | \rho_{\nu , jk}  | \big) + C n^{-1} w_n. \nn
\end{align}
Then, applying Bernstein's inequality to $T_{0j}+T_{0k}$ we deduce that
\begin{align}
	&	\mathbb{P}\big\{   |T_{0j} + T_{0k}| \geq 2(z-\delta_n) \big\} \nn \\
	& \leq  2\exp \left\{  -  \frac{ (z-\delta_n)^2}{ 1 + | \rho_{\nu,jk} |  + C( n^{-1} w_n  +  w_n^{-1/2}   z ) }  \right\}  \nn \\
	& \leq  2\exp\left\{  -  \frac{ z^2}{ 1 + | \rho_{\nu,jk} |  + C( n^{-1} w_n  +  w_n^{-1/2}   z ) }  \right\}  \label{tail.prob.Tj+Tk}
\end{align}
for all $z \geq 1$ and sufficiently large $n$. A similar argument can be used to show that the same type of bound holds for $\mathbb{P}\{   |T_{0j} - T_{0k}| \geq 2(z-\delta_n)  \}$. Together, \eqref{joint.tail.prob} and \eqref{tail.prob.Tj+Tk} prove \eqref{joint.tail} for $z\in [1 , o( \sqrt{w_n} ) )$.

Next we prove \eqref{joint.md} when $| \rho_{\nu,jk} | \leq (\log p)^{-2-\kappa}$ for some $\kappa >0$. Without loss of generality, we assume that $0<\kappa<1$. Write $ {\bT} = ( T_j, T_k)^\T$  and $\bT_0 = ( T_{0j} , T_{0k})^\T$. Let $\bxi_{ni} = \big( \sigma_{j,\tau}^{-1} (\Id - \e) \ell'_\tau( \nu_k )  ,  \sigma_{k,\tau}^{-1} (\Id - \e) \ell'_\tau( \nu_k)   \big)^\T$, $i=1,\ldots, n$ be i.i.d. bivariate random vectors with mean zero and covariance matrix $\bA = (a_{\ell m})_{1\leq \ell, m\leq 2}$, where $a_{11} = a_{22} =1 $ and $a_{12} = a_{21} = \rho :=(  \sigma_{j,\tau} \sigma_{k,\tau} )^{-1} \e \{  (\Id - \e) \ell'_\tau( \nu_j ) (\Id - \e) \ell'_\tau( \nu_k) \}$. For every $t>0$, we have
\begin{align}
  \mathbb{P}\big( \| \bT \|_{\min} \geq t \big)
  & = \mathbb{P}\big(  T_{0j} \leq - t, T_{0k} \leq -t \big)  + \mathbb{P}\big(  T_{0j}^- \leq - t,  T_{0k}^- \leq -t \big)   \label{joint.dec}   \\
  & \quad \ \ + \mathbb{P}\big(  T_{0j}^- \leq - t, T_{0k} \leq -t \big) + \mathbb{P}\big(  T_{0j} \leq - t,  T_{0k}^- \leq -t \big) , \nn
\end{align}
where $T_{0j}^- = -T_{0j}$ and $T_{0k}^- = -T_{0k}$. Applying Theorem~1.1 in \cite{B2003} to $\bT_0 = n^{-1/2} \sn \bxi_{ni}$ and $(T_{0j}^-, T_{0k}^-)^\T = - n^{-1/2} \sn \bxi_{ni}$, we deduce that
\begin{align}
\begin{split}
	\sup_{t_1 , t_2 \in \bbr} \big| \mathbb{P}\big(T_{0j} \leq t_1 , T_{0k} \leq t_2 \big) - \mathbb{P}\big( W_1 \leq t_1, W_2 \leq t_2\big) \big| & \leq C  \, \e \| \bxi_{n1} \|_2^3 \, n^{-1/2} ,  \\
	\sup_{t_1 , t_2 \in \bbr} \big| \mathbb{P}\big(T_{0j}^- \leq t_1 , T_{0k}^- \leq t_2 \big) - \mathbb{P}\big( W_1 \leq t_1, W_2 \leq t_2\big) \big| & \leq C  \, \e \| \bxi_{n1} \|_2^3 \, n^{-1/2} , 
\end{split}  \label{2d.BE1}
\end{align}
where $\bW= ( W_1, W_2)^\T \sim N(\mo, \bA)$. Further, it can similarly shown that
\begin{align}
\begin{split}
\sup_{t_1 , t_2 \in \bbr} \big| \mathbb{P}\big( T_{0j}^- \leq t_1 , T_{0k} \leq t_2 \big) - \mathbb{P}\big( W_1^- \leq t_1, W_2 \leq t_2\big) \big| & \leq C  \, \e \| \bxi_{n1} \|_2^3 \, n^{-1/2} ,    \\
\sup_{t_1 , t_2 \in \bbr} \big| \mathbb{P}\big( T_{0j} \leq t_1 , T_{0k}^- \leq t_2 \big) - \mathbb{P}\big( W_1 \leq t_1, W_2^- \leq t_2\big) \big| & \leq C  \, \e \| \bxi_{n1} \|_2^3 \, n^{-1/2} , 
\end{split}   \label{2d.BE2}
\end{align}
where $W_1^-=-W_1$ and $W_{2}^- = -W_2$. For the Gaussian random vector $ ( W_1, W_2)^\T$, the anti-concentration inequality states that for every $t_1, t_2\in \bbr$ and $\varepsilon>0$, $\mathbb{P}(W_1 \leq t_1 + \varepsilon ,  W_2 \leq t_2  + \varepsilon ) - \mathbb{P}(W_1 \leq t_1 , W_2 \leq t_2 ) \leq C  \varepsilon$. This, combined with \eqref{approx.3}, \eqref{joint.dec}, \eqref{2d.BE1} and \eqref{2d.BE2} yields that
\begin{align}
  \sup_{t \geq 0 }  \big|  \mathbb{P}\big( \|  {\bT} \|_{\min}  \geq t \big) - \mathbb{P}\big(  \| \bW \|_{\min} \geq t \big) \big| \leq  C \big(  n^{-1/2} w_n +  e^{-w_n} \big) .   \label{min.BE}
\end{align}
By \eqref{cov.approxi} and the assumption that $w_n \asymp n^{-1/5}$ and $\log p=o(n^{1/5})$, $W_1$ and $W_2$ are weakly correlated with $\rho = \cov(W_1, W_2) \leq  C (\log p)^{-2- \kappa}$. Therefore, for every $0\leq t\leq 1$ and all sufficiently large $n$, we deduce that
\begin{align}
 &  \mathbb{P}\big(  \| \bW \|_{\min} \geq t \big)  \nn \\
 & = \frac{1}{ 2\pi \sqrt{1-\rho^2}} \int_{  |x| \geq t }  \int_{  |y|  \geq t } \exp\bigg\{ - \frac{x^2 + y^2 - 2\rho xy}{2(1-\rho^2)} \bigg\} \, dx \, dy  \nn \\
 & \leq   \frac{1}{2\pi  } \int_{|x| \geq t} \int_{|y| \geq t}\exp\bigg( - \frac{x^2 + y^2 }{2 } \bigg) \, dx \, dy  \big\{ 1 + C (\log p)^{-1-\kappa} \big\}  \nn \\
 & \leq  \mathbb{P}\big(  \| \bG \|_{\min}  \geq t \big) \big\{ 1 + C (\log p)^{-1-\kappa} \big\}  \label{GAR.perturb1}
\end{align}
and similarly,
\begin{align}
   \mathbb{P}\big(  \| \bW \|_{\min} \geq t \big)   \geq  \mathbb{P}\big(  \| \bG \|_{\min}  \geq t  \big) \big\{ 1  - C (\log p)^{-1- \kappa} \big\} ,   \label{GAR.perturb2}
\end{align}
where $\bG = ( G_1, G_2)^\T \sim N(\mo, \bI_2)$. Consequently, the conclusion \eqref{joint.md} for $0\leq z\leq 1$ follows from \eqref{min.BE}, \eqref{GAR.perturb1} and \eqref{GAR.perturb2}.

Now it remains to consider the case of $z\geq 1$. By \eqref{approx.3},
\begin{align}
& \mathbb{P}\big(  \| \bT_0 \|_{\min}    \geq z + \delta_n  \big) -  C e^{-w_n} \nn \\
& \leq  \mathbb{P}\big( \|  {\bT } \|_{\min} \geq z  \big)  \leq \mathbb{P}\big( \| \bT_0 \|_{\min} \geq z - \delta_n \big) + C  e^{-w_n} . \label{joint.bd}
\end{align}
Again, we shall use Lemma~3.1 in the supplement of \cite{LS2014} to prove the Gaussian approximation for $\bT_0$. Recall that $\bT_0 = n^{-1} \sn \bxi_{ni}$, where $\bxi_{ni}$'s are i.i.d. centered  random vectors satisfying $\max_{1\leq i\leq n}\| \bxi_{ni} \|_2 \leq c   (  n^{-1/2}\tau ) \sqrt{n}$ and $\beta_n := n^{-3/2} \sn \e \| \bxi_{ni} \|_2^3 \leq C n^{-1/2} $. Moreover, using Lemma~\ref{RAlem2} and \eqref{cov.approxi},
$$
	 \| n^{-1} \cov( \bxi_{n1} + \cdots + \bxi_{nn} ) - \bI_2  \|  = \|  \bA - \bI_2 \| \leq C    (\log p)^{-2 -\kappa}  .
$$
Hence, it follows from Lemma~3.1 in the supplement of \cite{LS2014} by taking $d=2$, $B_n =n$, $c_n \asymp w_n^{-1/2}$, $b_n \asymp   (\log p)^{-2-\kappa}$, $d_n = (\log p)^{-3/2 - \kappa/2 }$, $t_n=  (   C_{3,2}^{-1/2}\vee 4) ( \sqrt{\log p} + B_n^{-1/2} x)$ and $x=B_n^{1/2} t$ that
\begin{align}
	&  \big| \mathbb{P}\big( \| \bT_0 \|_{\min} \geq t  \big)   -  \mathbb{P}\big(  \| \bG \|_{\min} \geq t \big)  | \nn       \\
	&  \leq  C   \bigg\{  \frac{( \sqrt{\log p} +t )^3 }{\sqrt{n}} + \frac{1+t}{ (\log p)^{(3+ \kappa )/2} }  \bigg\}  \mathbb{P} \big(  \| \bG \|_{\min} \geq t \big) +  7 n^{-1}  e^{-t^2} \nn \\
	& \quad   \quad \quad \ \   + 9  \exp\bigg[  - c  \min\bigg\{  \frac{n}{(\log p)^{3+\kappa} \log n } ,  (\log p)^{(1+\kappa)/2}  \sqrt{w_n}    ,  (\log p)^{1+\kappa} \bigg\} \bigg]    \label{joint.GAR}
\end{align}
for all $0\leq t \leq c \min\{ \sqrt{w_n}  , (\log p)^{(3+\kappa)/2} \}$ with $n$ sufficiently large.

Together, \eqref{joint.bd}, \eqref{joint.GAR} and \eqref{Gaussian.perturbation} prove \eqref{joint.md} for $1\leq z \leq A \sqrt{\log p}$.  \qed

\section{Additional simulation results}
\label{appC}

In this section, we present numerical results comparing the performance of the RD-A$_{{\rm N}}$ procedure and the OD-A procedure. Again, we consider the three factor model $ X_{ij} = \mu_j + \bb_j^\T \bff_i +  u_{ij}$ for $i=1,\ldots, n$, where $\bu_i = (u_{i1}, \ldots, u_{ip})^\T$ are i.i.d. copies of $\bu = (u_1,\ldots, u_p)^\T$. We simulate $\{\bff_i\}_{ i =1}^n$ from $N_3( \mo ,\bSigma_f)$ as in the main text; independently, we generate the loadings $\{ \bb_j = (b_{j1}, b_{j2}, b_{j3})^\T \}_{j=1}^p$ according to $b_{j1} , b_{j3} \overset{{\rm i.i.d.}}{\sim}\mbox{Uniform}(0.5,1.5)$ and $b_{j2}\overset{{\rm i.i.d.}}{\sim}\mbox{Uniform}(-2,-1)$. The errors $\{\bu_i\}_{i=1}^n$ are generated independently from the following distributions:
\begin{itemize}
\item \textbf{Model 5}.  $\bu \sim (1/\sqrt{2}) \, t_{4}(\mo, \bSigma_u)$;

\item \textbf{Model 6}.  $\bu \sim \pi \bu_1   +  (1-\pi) (\bu_2 - \bu_3 ) $, where $\bu_1, \bu_2, \bu_3$ and $\pi$ are independent and satisfy $\bu_1 \sim (1/\sqrt{2}) \, t_{4}(\mo, \bSigma_u)$, $\bu_2 , \bu_3 \sim (1/4) \exp\{ N(\mo, \bSigma_u)\}$, and $\mathbb{P}(\pi=1)=0.6$, $\mathbb{P}(\pi=0)=0.4$;

\item \textbf{Model 7}.  $\bu \sim \pi \bu_1   + (1-\pi) (\bu_2 - \bu_3 ) $, where $\bu_1, \bu_2=(u_{21}, \ldots, u_{2p})^\T, \bu_3=(u_{31}, \ldots, u_{3p})^\T$ and $\pi$ are independent and satisfy $\bu_1 \sim (1/\sqrt{2}) \,  t_{4}(\mo, \bSigma_u)$, $u_{2j}, u_{3j} \overset{{\rm i.i.d.}}{\sim}   (1/2) \mbox{Weibull}(0.75, 0.75)$, and $\mathbb{P}(\pi=1)=0.25$, $\mathbb{P}(\pi=0)=0.75$;

\item \textbf{Model 8}.  $\bu \sim \pi \bu_1   + (1-\pi) \bu_2 $, where $\bu_1, \bu_2$ and $\pi$ are independent and satisfy $\bu_1 \sim (1/\sqrt{2}) \,  t_{4}(\mo, \bSigma_u)$, $\bu_2 \sim N(\mo, \bSigma_u)$ and $\mathbb{P}(\pi=1)=0.9$, $\mathbb{P}(\pi=0)=0.1$.

\end{itemize}

To use normal calibration, we need to estimate the variance $\sigma_{jj}=\var(X_j)$. We compute the median-of-means estimator $\wt \sigma_{jj}(V)$ with a universal parameter $V= \ceil{0.5 \log(pn)}$ for all $j$. Since we need to subtract the common variance estimator according to \eqref{mom-var}, the final estimate of $\var(u_j)$ may sometimes be too close to zero. To slightly improve numerical performance, we proceed as follows: (i) For each $j$, compute $\wt \sigma_{jj}(v)$ for $v=1,\ldots,V$; (ii) remove those $\wt \sigma_{jj}(v)$'s that are smaller than $\hat \bb_j^\T \hat{\bSigma}_f \hat \bb_j$; (iii) taking the 0.75 quantile of the remaining $\wt \sigma_{jj}(v)$'s as the final estimate of $\sigma_{jj}$.  The numerical results indicate that this modified procedure is numerically stable.

In the simulations reported here, we take $p = 2000$, $n = 80, 120$, $\mu_j= \mu$ for $1\leq j\leq \pi_1p$ and $\mu_j=0$ otherwise, where $\mu=\sqrt{2(\log p)/n}$ and $\pi_1=0.25$. For simplicity, we set $\lambda = 0.5$ in our procedure and use the Matlab package \texttt{mafdr} to compute the estimate $\hat{\pi}_0(\lambda)$ of $\pi_0=1-\pi_1$. All the results are based on 500 simulation rounds. From Tables~\ref{tab3} and \ref{tab4} we see that, in the presence of heavy-tailed errors, the RD-A procedure provides consistently better controls of the FDR at the expense of slight compromises of the FNR and TPR. Figure~\ref{fig:model6} compares the performance 
of the three methods, for various sample sizes and signal strengths, in the cases of Model 2 and Model 6. Wee see that the RD-A consistently outperforms the two other methods, across all sample sizes and even for low signal strengths when all the methods exhibit higher errors. 

\begin{table}[tbh]\centering
{\begin{tabular}{ccrrrrrrrrr}
\hline \vspace{-0.25cm} \\
&   &  \multicolumn{6}{c}{Student's $t$}      \\
 &  & \multicolumn{3}{c}{$n=80$} & \multicolumn{3}{c}{$n=120$}    \\
  \cline{3-5} \cline{6-8}    \vspace{-0.2cm}   \\
   &  & $\alpha=$5\% & 10\% & 20\%  & 5\%  &  10\%   & 20\%	\vspace{0.1cm} \\
 \hline  \vspace{-0.2cm}  \\
\multirow{2}{*}{FDR} & RD-A$_{{\rm N}}$ & 4.94\%  &  9.67\%  &  19.21\% &  4.73\% & 9.49\%  & 19.14\%   \\
 & OD-A & 7.15\%  &  13.27\%  &  24.90\% & 6.29\%  & 12.11\% & 23.27\%    \\
\hline \vspace{-0.2cm} \\
\multirow{2}{*}{FNR} &  RD-A$_{{\rm N}}$  &  3.20\% & 1.96\% & 1.00\% & 2.92\% & 1.76\% & 0.87\%     \\
 & OD-A &  2.39\% & 1.46\% & 0.75\% & 2.37\% & 1.42\%  & 0.72\%   \\
\hline \vspace{-0.2cm} \\
\multirow{2}{*}{TPR} &  RD-A$_{{\rm N}}$ & 90.08\% & 94.13\% & 97.15\% & 91.01\%  & 94.75\%  & 97.53\% \\
 & OD-A &  92.74\% &  95.73\% & 97.95\% & 92.79\%  & 95.81\% & 98.01\%     \\
\hline \vspace{-0.25cm} \\
&   &  \multicolumn{6}{c}{Mixture Student's $t$/Lognormal}       \\
 &  & \multicolumn{3}{c}{$n=80$} & \multicolumn{3}{c}{$n=120$}    \\
  \cline{3-5}\cline{6-8} \cline{9-11}   \vspace{-0.2cm}   \\
   & & $\alpha=$5\% & 10\% & 20\%  & 5\%  &  10\%   & 20\%   \vspace{0.1cm} \\
 \hline  \vspace{-0.2cm}  \\
\multirow{2}{*}{FDR} &  RD-A$_{{\rm N}}$ & 4.43\%  &  8.91\%  &  18.16\% & 4.24\% & 8.80\% & 18.26\%    \\
 & OD-A &  6.96\%  &  13.22\%  &  25.07\% & 6.04\%  & 11.93\% & 23.34\%    \\
\hline \vspace{-0.2cm} \\
\multirow{2}{*}{FNR} & RD-A$_{{\rm N}}$  &  1.86\% & 1.10\% & 0.55\% & 1.79\% & 1.05\% & 0.52\%    \\
 & OD-A & 1.35\% & 0.82\% & 0.42\% & 1.44\% & 0.86\%  & 0.43\%  \\
\hline \vspace{-0.2cm} \\
\multirow{2}{*}{TPR} &  RD-A$_{{\rm N}}$  &  94.34\% & 96.73\% & 98.46\% & 94.54\%  & 96.88\%  & 98.54\%  \\
 & OD-A &  95.96\%  & 97.63\%  & 98.87\% & 95.67\%  & 97.48\% & 98.82\%  \\\hline
\end{tabular}
 \vspace{0.2cm}
\caption{Empirical FDR, FNR and TPR based on a factor model with dependent errors following a Student's $t$ distribution (Model 5) and a mixture Student's $t$/lognormal distribution (Model 6).  }
\label{tab3}}
\end{table}

\begin{table}[tbh]\centering
{\begin{tabular}{ccrrrrrrrrr}
\hline \vspace{-0.25cm} \\
&   &  \multicolumn{6}{c}{Mixture Student's $t$/Weibull}      \\
 &  & \multicolumn{3}{c}{$n=80$} & \multicolumn{3}{c}{$n=120$}    \\
  \cline{3-5} \cline{6-8}    \vspace{-0.2cm}   \\
   &  & $\alpha=$5\% & 10\% & 20\%  & 5\%  &  10\%   & 20\%	\vspace{0.1cm} \\
 \hline  \vspace{-0.2cm}  \\
\multirow{2}{*}{FDR} & RD-A$_{{\rm N}}$ & 4.44\%  &  9.01\%  &  18.24\% & 4.45\% & 9.09\%  & 18.46\%   \\
 & OD-A &  6.84\%  &  13.04\%  &  24.66\% & 6.18\%  & 12.04\% & 23.20\%    \\
\hline \vspace{-0.2cm} \\
\multirow{2}{*}{FNR} &  RD-A$_{{\rm N}}$  &  1.66\% & 0.94\% & 0.44\% & 1.43\% & 0.79\% & 0.36\%     \\
 & OD-A &  1.12\% & 0.64\% & 0.30\% & 1.09\% & 0.60\%  & 0.27\%   \\
\hline \vspace{-0.2cm} \\
\multirow{2}{*}{TPR} &  RD-A$_{{\rm N}}$ &94.97\% & 97.21\% & 98.75\% & 95.70\%  & 97.67\%  & 99.00\% \\
 & OD-A & 96.65\% & 98.15\% & 99.20\% & 96.76\%  & 98.25\% & 99.25\%   \\
\hline \vspace{-0.25cm} \\
&   &  \multicolumn{6}{c}{Mixture Student's $t$/Normal }       \\
 &  & \multicolumn{3}{c}{$n=80$} & \multicolumn{3}{c}{$n=120$}    \\
  \cline{3-5}\cline{6-8} \cline{9-11}   \vspace{-0.2cm}   \\
   & & $\alpha=$5\% & 10\% & 20\%  & 5\%  &  10\%   & 20\%   \vspace{0.1cm} \\
 \hline  \vspace{-0.2cm}  \\
\multirow{2}{*}{FDR} &  RD-A$_{{\rm N}}$ & 5.04\%  &  9.89\%  &  19.57\% & 4.76\% & 9.49\% & 18.98\%    \\
 & OD-A &  7.32\%  &  13.60\%  &  25.34\% & 6.29\%  & 12.10\% & 23.07\%    \\
\hline \vspace{-0.2cm} \\
\multirow{2}{*}{FNR} & RD-A$_{{\rm N}}$  &  3.16\% & 1.92\% & 0.98\% & 2.91\% & 1.75\% & 0.89\%    \\
 & OD-A & 2.34\% & 1.43\% & 0.73\% & 2.37\% & 1.43\%  & 0.73\%     \\
\hline \vspace{-0.2cm} \\
\multirow{2}{*}{TPR} &  RD-A$_{{\rm N}}$  &  90.25\% & 94.25\% & 97.23\% & 91.02\%  & 94.75\%  & 97.43\%  \\
 & OD-A &  92.90\% & 95.83\% & 98.04\% & 92.78\%  & 95.76\% & 97.97\%      \\\hline
\end{tabular}
 \vspace{0.2cm}
\caption{Empirical FDR, FNR and TPR based on a factor model with dependent errors following a mixutre Student's $t$/Weibull distribution (Model 7) and a mixture Student's $t$/normal distribution (Model 8).  }
\label{tab4}}
\end{table}

\begin{figure}      
         \centering
               \includegraphics[width=0.49\textwidth,]{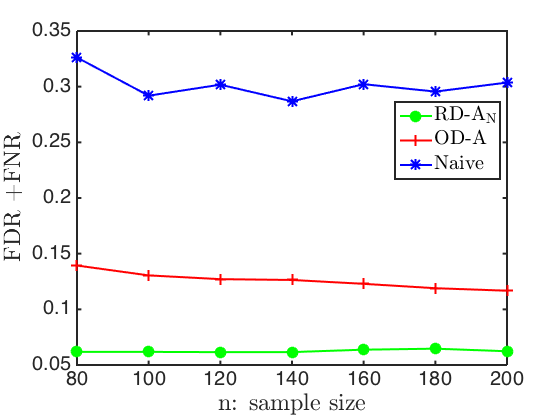}
               \includegraphics[width=0.49\textwidth,]{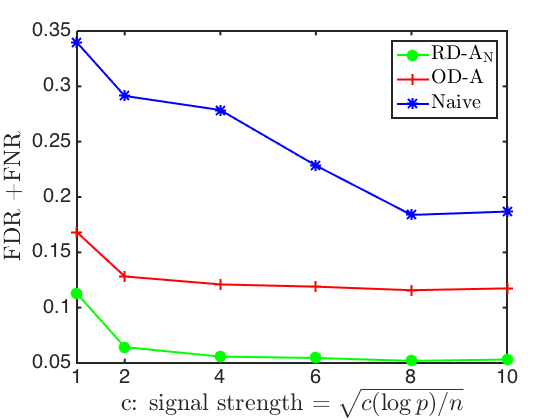}\\
                \includegraphics[width=0.49\textwidth,]{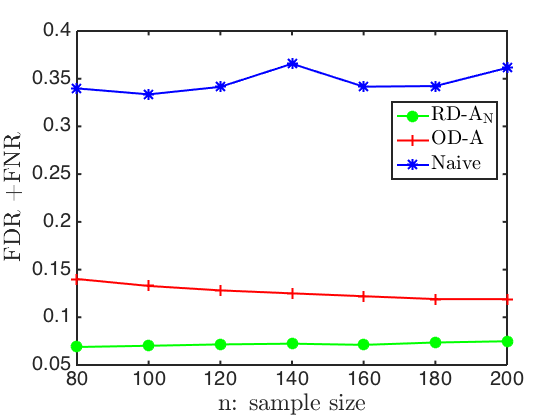}
               \includegraphics[width=0.49\textwidth,]{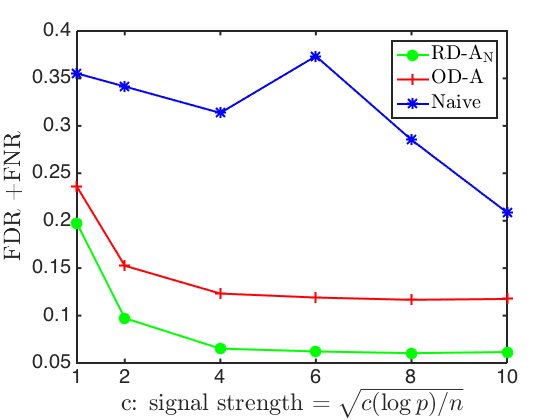}             
           \caption{Empirical FDR and FNR of the testing problem at the $10\%$ significance level, when the data follows the distribution in Model 2 (Student's $t_{2.5}$) in the top panel and Model 6 (mixture of Student's $t$/Lognormal) in the bottom panel. On the left panel, the sample size varies, with the signal strength fixed at $\mu=\sqrt{2(\log p)/n}$. On the right panel, $n$ is fixed at $120$ and we vary the value of $c$, where $\mu=\sqrt{c(\log p)/n}$. Here $p=2000$, the proportion of true signals is $0.25$ and tuning parameter $\tau_j = \,\hat{\sigma}_j \sqrt{n/\log(pn)}$.}
            \label{fig:model6}
\end{figure}


\begin{thebibliography}{9}

\bibitem[Ahn and Horenstein(2013)]{AH2013}	
	{\sc Ahn, S.\,C.} and {\sc Horenstein, A.\,R.} (2013).
	Eigenvalue ratio test for the number of factors.
	{\it Econometrica} {\bf 81} 1203--1227.

\bibitem[Barbe and Bertail(1995)]{BB1995}
	{\sc Barbe, P.} and {\sc Bertail, P.} (1995).
	{\it The Weighted Bootstrap.}
	{\it Lecture Notes in Statistics} {\bf 98}. Springer, New York.

\bibitem[Barras, Scaillet and Wermers(2010)]{BSW2010}
        {\sc Barras, L., Scaillet, O.} and {\sc Wermers, R.} (2010).
        False discoveries in mutual fund performance: Measuring luck in estimated alphas.
        {\it J. Finance} {\bf 65} 179--216.

\bibitem[Benjamini and Hochberg(1995)]{BH1995}	
	{\sc Benjamini, Y.} and {\sc Hochberg, Y.} (1995).
	Controlling the false discovery rate: A practical and powerful approach to multiple testing.
	{\it J. R. Stat. Soc. Ser. B. Stat. Methodol.} {\bf 57} 289--300.

\bibitem[Benjamini and Yekutieli(2001)]{BY2001}
	{\sc Benjamini, Y.} and {\sc Yekutieli, D.} (2001).
	The control of the false discovery rate in multiple testing under dependency.
	{\it Ann. Statist.} {\bf 29} 1165--1188.

\bibitem[Carhart(1997)]{C1997}
	{\sc Carhart, M.\,M.} (1997).
	On persistence in mutual fund performance.
	{\it J. Finance} {\bf 52} 57--82.

\bibitem[Catoni(2012)]{C2012}
	{\sc Catoni, O.} (2012).
	Challenging the empirical mean and empirical variance: A deviation study.
	{\it Ann. Inst. Henri Poincar\'e Probab. Stat.} {\bf 48} 1148--1185.

\bibitem[Chang, Shao and Zhou(2016)]{CSZ2016}
	{\sc Chang, J., Shao, Q.-M.} and {\sc Zhou, W.-X.} (2016).
	Cram\'er-type moderate deviations for Studentized two-sample $U$-statistics with applications.
	{\it Ann. Statist.} {\bf 44} 1931--1956.
	
\bibitem[Clarke and Hall(2009)]{CH2009}
	{\sc Clarke, S.} and {\sc Hall, P.} (2009).
	Robustness of multiple testing procedures against dependence.
	{\it Ann. Statist.} {\bf 37} 332--358.

\bibitem[Cont(2001)]{C2001}
	{\sc Cont, R.} (2001).
	Empirical properties of asset returns: Stylized facts and statistical issues.
	{\it Quant. Finance} {\bf 1} 223--236.

\bibitem[Delaigle, Hall and Jin(2011)]{DHJ2011}	
	{\sc Delaigle, A.}, {\sc Hall, P.} and {\sc Jin, J.} (2011).
 	Robustness and accuracy of methods for high dimensional data analysis based on Student's $t$-statistic.
 	{\it J. R. Stat. Soc. Ser. B. Stat. Methodol.} {\bf 73} 283--301.

\bibitem[Desai and Storey(2012)]{DS2012}
	{\sc Desai, K.\,H.} and {\sc Storey, J.\,D.} (2012).
	Cross-dimensional inference of dependent high-dimensional data.
	{\it J. Amer. Statist. Assoc.} {\bf 107} 135--151.

\bibitem[Efron(2004)]{E2004}
	{\sc Efron, B.} (2004).
	Large-scale simultaneous hypothesis testing: The choice of a null hypothesis.
	{\it J. Amer. Statist. Assoc.} {\bf 99} 96--104.

\bibitem[Efron(2007)]{E2007}
	{\sc Efron, B.} (2007).
	Correlation and large-scale simultaneous significance testing.
	{\it J. Amer. Statist. Assoc.} {\bf 102} 93--103.

\bibitem[Efron et al.(2001)]{ETST2001}
	{\sc Efron, B.}, {\sc Tibshirani, R.}, {\sc Storey, J.\,D.} and {\sc Tusher, V.} (2001).
	Empirical Bayes analysis of a microarray experiment.
	{\it J. Amer. Statist. Assoc.} {\bf 96} 1151--1160.

\bibitem[Fama(1963)]{F1963}
	{\sc Fama, E.\,F.} (1963).
	Mandelbrot and the stable paretian hypothesis.
	{\it  J. Bus.} {\bf 36} 420--429.

\bibitem[Fan, Hall and Yao(2007)]{FHY2007}	
	{\sc Fan, J.}, {\sc Hall, P.} and {\sc Yao, Q.} (2007).
	To how many simultaneous hypothesis tests can normal, Student's $t$ or bootstrap calibration be applied?
 	{\it J. Amer. Statist. Assoc.} {\bf 102} 1282--1288.

\bibitem[Fan and Han(2017)]{FH2017}	
	{\sc Fan, J.} and {\sc Han, X.} (2017).
	Estimation of the false discovery proportion with unknown dependence.
 	{\it J. R. Stat. Soc. Ser. B. Stat. Methodol.} {\bf 79} 1143--1164. 

\bibitem[Fan, Han and Gu(2012)]{FHG2012}	
	{\sc Fan, J.}, {\sc Han, X.} and {\sc Gu, W.} (2012).
	Estimating false discovery proportion under arbitrary covariance dependence.
 	{\it J. Amer. Statist. Assoc.} {\bf 107} 1019--1035.

\bibitem[Fan, Li and Wang(2017)]{FLW2014}	
	{\sc Fan, J.}, {\sc Li, Q.} and {\sc Wang, Y.} (2017).
 	Estimation of high dimensional mean regression in the absence of symmetry and light tail assumptions.
 	{\it J. R. Stat. Soc. Ser. B. Stat. Methodol.} {\bf 79} 247--265.

\bibitem[Finkenstadt and Rootz\'en(2003)]{FR2003}
	{\sc Finkenstadt, B.} and {\sc Rootze\'en, H.} (2003).
	{\it Extreme Values in Finance, Telecommunications and the Environment.}
	Chapman \& Hall, New York.

\bibitem[Friguet, Kloareg and Causeur(2009)]{FKC2009}
	{\sc Friguet, C., Kloareg, M.} and {\sc Causeur, D.} (2009).
	A factor model approach to multiple testing under dependence.
	{\it J. Amer. Statist. Assoc.} {\bf 104} 1406--1415.

\bibitem[Genovese and Wasserman(2004)]{GW2004}
	{\sc Genovese, C.} and {\sc Wasserman, L.} (2004).
	A stochastic process approach to false discovery control.
	{\it Ann. Statist.} {\bf 32} 1035--1061.

\bibitem[He and Shao(1996)]{HS1996}	
	{\sc He, X.} and {\sc Shao, Q.-M.} (1996).
	A general Bahadur representation of $M$-estimators and its application to linear regression with nonstochastic designs.
	{\it Ann. Statist.} {\bf 24} 2608--2630.

\bibitem[He and Shao(2000)]{HS2000}	
	{\sc He, X.} and {\sc Shao, Q.-M.} (2000).
	On parameters of increasing dimensions.
	{\it J. Multivariate Anal.} {\bf 73} 120--135.

\bibitem[Huber(1964)]{H1964}	
	{\sc Huber, P.\,J.} (1964).
	Robust estimation of a location parameter.
	{\it Ann. Math. Statist.} {\bf 35} 73--101.
	
\bibitem[Huber(1973)]{H1973}	
	{\sc Huber, P.\,J.} (1973).
	Robust regression: Asymptotics, conjectures and Monte Carlo.
	{\it Ann. Statist.} {\bf 1} 799--821.

\bibitem[Joly and Lugosi(2016)]{JL2015}
	{\sc Joly, E.} and {\sc Lugosi, G.} (2016).
	Robust estimation of $U$-statistics.	
	{\it Stochastic Process. Appl.} {\bf 126} 3760--3773.

\bibitem[Kosorok and Ma(2007)]{KM2007}
	{\sc Kosorok, M.\,R.} and {\sc Ma, S.} (2007).	
	Marginal asymptotics for the large $p$, small $n$ paradigm: With applications to microarray data.
	{\it Ann. Statist.} {\bf 35} 1456--1486.

\bibitem[Langaas and Lindqvist(2005)]{LL2005}
	{\sc Langaas, M.} and {\sc Lindqvist, B.} (2005).
	Estimating the proportion of true null hypotheses, with application to DNA microarray data.
	{\it J. R. Stat. Soc. Ser. B. Stat. Methodol.} {\bf 67} 555--572.

\bibitem[Leek and Storey(2008)]{LS2008}
	{\sc Leek, J.\,T.} and {\sc Storey, J.\,D.} (2008).
	A general framework for multiple testing dependence.
	{\it Proc. Natl. Acad. Sci. USA} {\bf 105} 18718--18723.

\bibitem[Lehmann and Romano(2005)]{LR2005}
	{\sc Lehmann, E.\,L.} and {\sc Romano, J.\,P.} (2005).
	Generalizations of the familywise error rate.
	{\it Ann. Statist.} {\bf 33} 1138--1154.

\bibitem[Linnik(1961)]{L1961}
	{\sc Linnik, Ju.\,V.} (1961).
	On the probability of large deviations for the sums of independent variables.
	In {\it Proc. 4th Berkeley Sympos. Math. Statist. and Prob., Vol.\,{II}} 289--306. Univ. California Press, Berkeley, CA.

\bibitem[Liu and Shao(2010)]{LS2010}
	{\sc Liu, W.} and {\sc Shao, Q.-M.} (2010).
	Cram\'er-type moderate deviation for the maximum of the priodogram with application to simultaneous tests in gene expression time series.
	{\it Ann. Statist.} {\bf 38} 1913--1935.

\bibitem[Liu and Shao(2014)]{LS2014}
	{\sc Liu, W.} and {\sc Shao, Q.-M.} (2014).
	Phase transition and regularized bootstrap in large-scale $t$-tests with false discovery rate control.
	{\it Ann. Statist.} {\bf 42} 2003--2025.

\bibitem[Mammen(1989)]{M1989}
	{\sc Mammen, E.} (1989).
	Asymptotics with increasing dimension for robust regression with applications to the bootstrap.
	{\it Ann. Statist.} {\bf 17} 382--400.	
	
\bibitem[Mandelbrot(1963)]{M1963}
	{\sc Mandelbrot, B.} (1963).
	The variation of certain speculative prices.
	{\it J. Bus.} {\bf 36} 394--419.

\bibitem[Meinshausen and Rice(2006)]{MR2006}
	{\sc Meinshausen, N.} and {\sc Rice, J} (2006).
	Estimating the proportion of false null hypotheses among a large number of independently tested hypotheses.
	{\it Ann. Statist.} {\bf 34} 373--393.

\bibitem[Nemirovsky and Yudin(1983)]{NY1983}
	{\sc Nemirovsky, A.\,S.} and {\sc Yudin, D.\,B.} (1983).
	{\it Problem Complexity and Method Efficiency in Optimization.}
	Wiley, New York.

\bibitem[Oberthuer et al.(2006)]{O2006}
	{\sc Oberthuer, A., Berthold, F., Warnat, P., Hero, B., Kahlert, Y., Spitz, R., Ernestus, K., K\"onig, R., Haas, S., Eils, R., Schwab, M., Brors, B., Westermann, F.} and {\sc Fischer, M.} (2006).
	Customized oligonucleotide microarray gene expression based classification of neuroblastoma patients outperforms current clinical risk stratification.
	{\it J. Clin. Oncol.} {\bf 24} 5070--5078.

\bibitem[Portnoy(1985)]{P1985}
	{\sc Portnoy, S.} (1985).
	Asymptotic behavior of $M$ estimators of $p$ regression parameters when $p^2/n$ is large. II. Normal approximation.
	{\it Ann. Statist.} {\bf 13} 1403--1417.

\bibitem[Schwartzman and Lin(2011)]{SL2011}
	{\sc Schwartzman, A.} and {\sc Lin, X.} (2011).
	The effect of correlation in false discovery rate estimation.
	{\it Biometrika} {\bf 98} 199--214.

\bibitem[Shi et al.(2012)]{Shi2012}
	{\sc Shi, L.}, et al. {\sc (MAQC Consortium)} (2010).
	The MicroArray Quality Control (MAQC)-{\rm II} study of common practices for the development and validation of microarray-based predictive models.
	{\it Nat. Biotechnol.} {\bf 28} 827--841.

\bibitem[Spokoiny and Zhilova(2015)]{SZ2015}	
	{\sc Spokoiny, V.} and {\sc Zhilova, M.} (2015).
	Bootstrap confidence sets under model misspecification.
	{\it Ann. Statist.} {\bf 43} 2653--2675.

\bibitem[Storey(2002)]{S2002}
	{\sc Storey, J.\,D.} (2002).
	A direct approach to false discovery rates.
	{\it J. R. Stat. Soc. Ser. B. Stat. Methodol.} {\bf 64} 479--498.
	
\bibitem[Storey, Taylor and Siegmund(2004)]{STS2004}	
	{\sc Storey, J.\,D.}, {\sc Taylor, J.\,E.} and {\sc Siegmund, D.} (2004).
	Strong control, conservative point estimation and simultaneous conservative consistency of false discovery rate: A unified approach.
	{\it J. R. Stat. Soc. Ser. B. Stat. Methodol.} {\bf 66} 187--205.

\bibitem[Sun and Cai(2009)]{SC2009}
	{\sc Sun, W.} and {\sc Cai, T.\,T.} (2009).
	Large-scale multiple testing under dependence.
	{\it J. R. Stat. Soc. Ser. B. Stat. Methodol.} {\bf 71} 393--424.

\bibitem[Vershynin(2012)]{V2012}
	{\sc Vershynin, R.} (2012).
	Introduction to the non-asymptotic analysis of random matrices.
	In {\it Compressed Sensing} 210--268. Cambridge Univ. Press, Cambridge.
	
\bibitem[Yohai and Maronna(1979)]{YM1979}
	{\sc Yohai, V.\,J.} and {\sc Maronna, R.\,A.} (1979).
	Asymptotic behavior of $M$-estimators for the linear model.
	{\it Ann. Statist.} {\bf 7} 258--268.
	
\bibitem[Zhilova(2016)]{Z2016}
	{\sc Zhilova, M.} (2016).
	Non-classical Berry-Esseen inequality and accuracy of the weighted bootstrap.
	Available at \href{https://arxiv.org/abs/1611.02686}{arXiv:1611.02686.}
	
\bibitem[Zhou et al.(2017)]{ZBFL2017}
	{\sc Zhou, W.-X., Bose, K., Fan, J.} and {\sc Liu, H.} (2017).
	Supplement to ``A new perspective on robust $M$-estimation: Finite sample theory and applications to dependence-adjusted multiple testing.'' DOI:10.1214/17-AOS1606SUPP.
\end{thebibliography}

\begin{thebibliography}{9}

\bibitem[Bentkus(2003)]{B2003}
	{\sc Bentkus, V.} (2003).
	On the dependence of the Berry-Esseen bound on dimension.
	{\it J. Statist. Plann. Inference} {\bf 113} 385--402.

\bibitem[Fan, Li and Wang(2017)]{FLW2014}	
	{\sc Fan, J.}, {\sc Li, Q.} and {\sc Wang, Y.} (2017).
 	Estimation of high dimensional mean regression in the absence of symmetry and light tail assumptions.
 	{\it J. R. Stat. Soc. Ser. B. Stat. Methodol.} {\bf 79} 247--265.  

\bibitem[Fan et al.(2015)]{FLSZ2015}
	{\sc Fan, J.}, {\sc Liu, H.}, {\sc Sun, Q.} and {\sc Zhang, T.} (2015).
 	I-LAMM: Simultaneous control of algorithmic complexity and statistical error.
 	{\it Ann. Statist.} To appear.
 	Available at \href{https://arxiv.org/abs/1507.01037}{arXiv:1507.01037.}

\bibitem[Joly and Lugosi(2016)]{JL2015}
	{\sc Joly, E.} and {\sc Lugosi, G.} (2016).
	Robust estimation of $U$-statistics.
	{\it Stochastic Process. Appl.} {\bf 126} 3760--3773.

\bibitem[Liu and Shao(2014)]{LS2014}
	{\sc Liu, W.} and {\sc Shao, Q.-M.} (2014).
	Phase transition and regularized bootstrap in large-scale $t$-tests with false discovery rate control.
	{\it Ann. Statist.} {\bf 42} 2003--2025.

\bibitem[Spokoiny(2012)]{S2012}
	{\sc Spokoiny, V.} (2012).
	Parametric estimation. Finite sample theory.
	{\it Ann. Statist.} {\bf 40} 2877--2909.

\bibitem[Spokoiny(2013)]{S2013}
	{\sc Spokoiny, V.} (2013).
	Bernstein--von Mises theorem for growing parameter dimension. Preprint.
	Available at \href{https://arxiv.org/abs/1302.3430}{arXiv:1302.3430.}

\bibitem[Spokoiny and Zhilova(2015)]{SZ2015}	
	{\sc Spokoiny, V.} and {\sc Zhilova, M.} (2015).
	Bootstrap confidence sets under model misspecification.
	{\it Ann. Statist.} {\bf 43} 2653--2675.

\bibitem[Vershynin(2012)]{V2012}
	{\sc Vershynin, R.} (2012). 
	Introduction to the non-asymptotic analysis of random matrices. 
	In {\it Compressed Sensing} 210--268. Cambridge Univ. Press, Cambridge.
\end{thebibliography}
\end{document}